\newtheorem{theorem}{\textbf{Theorem}}[section]
\newtheorem{lemma}{\textbf{Lemma}}[section]
\newtheorem{proposition}{\textbf{Proposition}}[section]
\newtheorem{corollary}{\textbf{Corollary}}[section]
\newtheorem{remark}{\textbf{Remark}}[section]
\newtheorem{definition}{\textbf{Definition}}[section]
\def\be{\begin{equation}}
\def\ee{\end{equation}}
\def\bea{\begin{eqnarray}}
\def\eea{\end{eqnarray}}
\def\bt{\begin{theorem}}
\def\et{\end{theorem}}
\def\bl{\begin{lemma}}
\def\el{\end{lemma}}
\def\br{\begin{remark}}
\def\er{\end{remark}}
\def\bp{\begin{proposition}}
\def\ep{\end{proposition}}
\def\bc{\begin{corollary}}
\def\ec{\end{corollary}}
\def\bd{\begin{definition}}
\def\ed{\end{definition}}
\def\non{\nonumber }
\def\u{\mathbf{u}}
\begin{document}

\title{Well-posedness of a diffuse-interface model for two-phase incompressible flows with thermo-induced Marangoni effect}

\author{
{\sc Hao Wu} \footnote{School of Mathematical Sciences and Shanghai
Key Laboratory for Contemporary Applied Mathematics, Fudan
University, 200433 Shanghai, China, Email:
\textit{haowufd@yahoo.com}}
}
\date{\today}
\maketitle


\begin{abstract}
We investigate a non-isothermal diffuse-interface model that describes the dynamics of two-phase incompressible flows with thermo-induced Marangoni
effect. The governing PDE system consists of the Navier--Stokes equations coupled with convective phase-field and energy transport equations,
in which the surface tension, fluid viscosity and thermal diffusivity are allowed to be temperature dependent functions.
First, we establish the existence and uniqueness of local strong solutions when the spatial dimension is two and three. Then in the two dimensional case, assuming that the $L^\infty$-norm of the initial temperature is suitably bounded with respect to the coefficients of the system, we prove the existence of global weak solutions as well as the existence and uniqueness of global strong solutions.

 \textbf{Keywords}: Diffuse-interface model, Marangoni effect, Navier--Stokes equations, Well-posedness.

\textbf{AMS Subject Classification}: 35A01, 35A02, 35K20, 35Q35.
\end{abstract}

\section{Introduction}
\setcounter{equation}{0}

The Marangoni effect \cite{T1855,M1871} is an interesting phenomenon where mass transfer occurs due to differences in the surface tension that can either be attributed to non-uniform distributions of the surfactant \cite{MA00} or the existence of temperature
gradient in a neighborhood of the interface \cite{SS59}. The Marangoni effect turns out to dominate the convection when the depth of the fluid layer is sufficiently small, and it has many important applications in complex fluids, liquid-gas systems and ocean-geophysical dynamics \cite{BB03,BB06,YFLS05}.
For instance, the so-called B\'enard--Marangoni convection is a thermal convective motion due to the bulk forces resulting from the thermally induced density difference (buoyancy) and the interfacial forces resulting from the temperature-dependence of the surface tension for the free surface
(thermo-capillarity, i.e., the thermal-induced Marangoni effect).

Diffuse-interface models have been shown to be very powerful and efficient for modelling the dynamics of interfaces in multi-phase systems (see, e.g., \cite{AMW} and the references cited therein). In these models, sharp-interfaces of the macroscopically immiscible fluids are replaced by a thin layer (i.e., the diffuse-interface) with steep change on properties of different components. Comparing with the classical sharp-interface model, the diffuse-interface model allows for topological changes of  interfaces and has many advantages in numerical simulations for the interfacial motions \cite{FLSY05, LS03,YFLS04,LT98,HLLW11}.

In this paper, we aim to investigate a diffuse-interface model proposed by Liu et al \cite{LSFY05} (see also \cite{SLX09,LS03,GLW}) that describes the thermo-induced Marangoni effect for mixtures of two Newtonian flows (with matched density $\rho=1$ for the sake of simplicity). In this model, a phase function $\phi$ is introduced as the volume fraction of the two components. Then the interaction between different components is modeled by the elastic (mixing) energy of Ginzburg--Landau type \cite{LS03}:
 \be
 E(\phi)=\int_\Omega \left(\frac12|\nabla \phi|^2+W(\phi)\right)dx.
 \ee
 The gradient part $\int_\Omega \frac12|\nabla \phi|^2 dx$ represents the tendency to be homogeneous of the two-phase mixture, i.e., the ``phillic" interaction, while the bulk energy $\int_\Omega W(\phi) dx$ represents the ``phobic" interaction, or the tendency to be separated for different phases (see \cite{LS03}).
 Here, we consider the energy density function $W$ of the following typical form
\be
W(\phi)=\frac{1}{4\varepsilon^2} (\phi^2-1)^2, \label{W}
\ee
which can be viewed as a smooth double-well polynomial approximation of the physically relevant logarithmic potential \cite{58}. The small parameter $\varepsilon$ denotes the capillary width (i.e., width of the diffuse-interface) that reflects the competition between the two opposite tendencies \cite{LT98,LS03}. Then the governing PDE system (in a non-dimensional form) can be given as follows \cite{LSFY05,SLX09}:
\bea
&& \u_t+ \u\cdot\nabla \u-\nabla \cdot(2\mu(\theta)\mathcal{D} \u) +\nabla P\non\\
&& \quad =-\nabla \cdot \sigma + (\mathrm{Ra} \theta-\mathrm{Ga})g\mathbf{e}_n,\qquad\quad \quad \ (t,x)\in (0,T)\times\Omega,\label{1}\\
&& \nabla \cdot \u=0,\qquad\qquad\qquad\qquad\qquad\qquad\quad\,  (t,x)\in (0,T)\times\Omega,\label{2}\\
&& \phi_t+\u\cdot \nabla \phi-\gamma(\Delta \phi - W'(\phi))=0,\qquad\ \,  (t,x)\in (0,T)\times\Omega,\label{3}\\
&& \theta_t+\u\cdot\nabla \theta -\nabla \cdot(\kappa(\theta)\nabla \theta)=0,\qquad\qquad  (t,x)\in (0,T)\times\Omega,\label{4}
\eea
subject to the boundary and initial conditions:
\bea
&& \u|_\Gamma=\mathbf{0},\quad \phi|_\Gamma=\phi_b(x), \quad  \theta|_\Gamma=0, \qquad\quad  (t,x)\in (0,T)\times\Gamma,\label{bc1}\\
&&  \u|_{t=0}=\u_0(x),\quad \phi|_{t=0}=\phi_0(x), \quad \theta|_{t=0}=\theta_0(x), \qquad x\in \Omega. \label{ini}
\eea

Here, $\Omega$ is assumed to be a bounded domain in $\mathbb{R}^n$ ($n=2,
3$) with smooth boundary $\Gamma$.
Functions $\u$, $P$ and $\theta$ stand for the fluid velocity, the pressure, and the relative temperature (with respect to the background temperature $\theta_b$, which is assumed to be a constant here for the sake of simplicity), respectively. In the Navier--Stokes equation \eqref{1}, $\mathcal{D}\u=\frac12(\nabla \u+\nabla^T \u)$ corresponds to the symmetric part of the velocity gradient. The term $(\mathrm{Ra} \theta-\mathrm{Ga})g\mathbf{e}_n$ denotes the buoyancy force, in which $g$ is the gravitational acceleration constant and the constants $\mathrm{Ra}$, $\mathrm{Ga}$ are related to the Rayleigh number and Galileo number, respectively. The constant $\gamma>0$ in \eqref{3} represents the elastic relaxation time. The induced stress tensor $\sigma$ in \eqref{1} can be derived within the energetic variational framework by using the least action principle (see \cite{LSFY05,SLX09} and also \cite{GLW}) such that
\be
\sigma= \lambda(\theta)\nabla \phi\otimes \nabla \phi+\lambda(\theta)\left(\frac12|\nabla \phi|^2+W(\phi)\right)\mathbb{I}.\label{sigma}
\ee
In the formula \eqref{sigma}, the symbol  $\otimes$ denotes the usual Kronecker product, i.e., $(\mathbf{a}\otimes \mathbf{b})_{ij}=a_ib_j$ for vectors $\mathbf{a},\mathbf{b} \in
\mathbb{R}^n$ and $\mathbb{I}$ is the $n \times n$ identity matrix.
In order to model the thermal induced Marangoni effect, the temperature dependent surface tension coefficient
$\lambda$ is approximated by the E\"otv\"os rule and it takes the form of a linear function on the temperature such that
\be
 \lambda(\theta)=\lambda_0(a-b\theta),
 \non
 \ee
 where the coefficients $\lambda_0>0$, $a>0$, $b \neq 0$ are assumed to be constants (see \cite{SLX09,LSFY05,GLW}, and also \cite{BB03,BB06}). The coefficient $\lambda_0$ is proportional to the interface width $\varepsilon$, while the constants $a$ and $b$ are related to the capillary
number and the Marangoni number, respectively (see \cite{GLW}).

In this paper, we consider the general case that the fluid viscosity $\mu$ and the thermal diffusivity $\kappa$ are also allowed to depend on the temperature $\theta$, which are physically important in the study of non-isothermal fluids (see, e.g., \cite{LB96} and references cited therein).
More precisely, throughout the paper $\mu(\cdot)$ and $\kappa(\cdot)$ are supposed to be strictly positive smooth functions defined on $\mathbb{R}$ that satisfy
\be
\mu(\cdot), \ \kappa(\cdot)\in C^2(\mathbb{R})\quad\text{and}\quad  \mu(s)>0, \quad \kappa(s)> 0, \quad \forall\,s\in \mathbb{R}.\label{muka}
\ee
We remark that thanks to the maximum principle for the temperature variable $\theta$ (see Lemma \ref{mtheta}), no positive upper and lower bounds for $\mu(\cdot)$, $\kappa(\cdot)$ will be assumed (see Remark \ref{uplow} and Section 4).

The goal of this paper is to study well-posedness of the initial boundary value problem \eqref{1}--\eqref{ini} under the general assumption \eqref{muka}.
We shall first establish the existence and uniqueness of local strong solutions in both $2D$ and $3D$ cases (see Theorem \ref{str3D}).
Then in the $2D$ case, under the assumption that the $L^\infty$-norm of the initial temperature $\theta_0$ is suitably bounded with respect to those coefficients of the PDE system \eqref{1}--\eqref{4}, we prove the existence of global weak solutions (see Theorem \ref{weake1}) as well as the existence and uniqueness of global strong solutions (see Theorem \ref{str2D}).

 The coupled system \eqref{1}--\eqref{4} is a highly nonlinear PDE system that consists of the Navier--Stokes equations for the velocity $\u$, a convective Allen--Cahn type equation for the phase function $\phi$ and an energy transport equation for the temperature $\theta$. It contains
 the well-known Navier--Stokes--Allen--Cahn system \cite{FHL07, GG10, GG10b, LS03,SLX09} and the heat-conductive Boussinesq system \cite{HL05,LPZ15,huang,LPZ10,LB99,SZ13} as subsystems. A simplified version of the system \eqref{1}--\eqref{4} with \emph{constant} viscosity and thermal diffusivity has recently been considered in \cite{WX13}, where the authors proved the existence of global weak/strong solutions and investigated its long-time behavior as well as stability properties. However, because of the temperature-dependent fluid viscosity and thermal diffusivity, the arguments therein fail to apply here.

 The major challenges in mathematical analysis of the problem \eqref{1}--\eqref{ini} come from the highly nonlinear couplings between those equations due to the temperature-dependence of the surface tension parameter $\lambda$, the fluid viscosity $\mu$ and the thermal diffusivity $\kappa$. The strong nonlinear structure of problem \eqref{1}--\eqref{ini} brings us many difficulties to obtain necessary \emph{a priori} estimates. Similar difficulties have been found for the Boussinesq system with temperature dependent viscosity and thermal diffusivity (namely, without coupling with the phase-field equation in system \eqref{1}--\eqref{4}, and the nonlinearity of the highest-order in the Navier--Stokes equations, i.e., the induced capillary stress tensor $\sigma$, is neglected). We refer to \cite{LB99} for the existence of global weak solutions as well as the existence and uniqueness of local strong solutions for general data in both $2D$ and $3D$, see also \cite{LB96a} for the existence of global strong solutions with small initial data. When the spatial dimension is two, global existence of strong solutions either in a bounded domain or in the whole space has been proved in \cite{SZ13,WZF11}, while in \cite{huang} the global attractor was established in a periodic channel. We also refer to \cite{LPZ15}, where global well-posedness and long-time behavior were proved for the $2D$ Boussinesq system with partial dissipation (i.e., non-constant thermal diffusivity and zero fluid viscosity). In these previous works, delicate estimates were performed to overcome the related mathematical difficulties. It is worth noting that our non-isothermal diffuse-interface system \eqref{1}--\eqref{ini} has an even more complicated coupling structure than the classical Boussinesq system mentioned above. The temperature dependence of the surface tension $\lambda$ destroys the energy dissipation property of problem \eqref{1}--\eqref{ini}, which is important for the existence of global weak/strong solutions. To this end, we recall that the isothermal version of system \eqref{1}--\eqref{4} without the Boussinesq approximation term obeys the following dissipative energy law (see e.g., \cite{GLW,LSFY05,SLX09,HLLW11})
 \bea
 && \frac{d}{dt}\left[\frac12\int_\Omega |\u|^2 dx+
  \lambda\int_\Omega  \left(\frac{1}{2} |\nabla \phi|^2 + W(\phi) \right)dx\right]\non\\
 & =& -\mu\int_\Omega |\nabla{\u}|^2dx-\lambda\gamma\int_\Omega |-\Delta \phi +W'(\phi)|^2 dx,\label{bbel}
 \eea
 which provides the energy-level estimate of solutions to the isothermal Navier--Stokes--Allen--Cahn system and plays an essential role in the study of its well-posedness as well as long-time behavior (see \cite{GG10, GG10b}). There, certain special cancellation between the highly nonlinear induced stress term $\sigma$ in equation \eqref{1} and the convection term $\u\cdot\nabla \phi$ in equation \eqref{3} is crucial for the derivation of \eqref{bbel} (see \cite{GG10, YFLS04}, see also \cite{LL95} for the nematic liquid crystal system). However, for our problem \eqref{1}--\eqref{ini} with temperature dependent surface tension coefficient $\lambda$, similar dissipative energy law like \eqref{bbel} cannot be expected in general, because the specific cancellation mentioned above is no longer valid.

 We shall generalize several techniques in the literature to overcome the mathematical difficulties due to those strong nonlinear couplings caused by the temperature dependent coefficients. For instance, since the energy-level estimates and higher-order estimates for problem \eqref{1}--\eqref{ini} with general initial data cannot be obtained in a separate way, in order to prove the existence of local strong solutions, we try to construct a novel higher-order differential inequality (see Lemma \ref{high3d}) and combine it with a small data argument for the shifted temperature $\hat{\theta}=\theta-\theta_0$ in the sprit of \cite{LB99}. On the other hand, maximum principles for the phase function $\phi$ and the temperature $\theta$ (see Lemmas \ref{mphi}, \ref{mtheta}) are crucial to reduce the difficulties from high-order nonlinearities. In particular, we find that if the $L^\infty$-norm of the initial temperature is properly bounded (only depending on coefficients of the system and the domain $\Omega$, but not on the initial data), we are able to obtain some global estimates (see Propositions \ref{BEL1}, \ref{low-estimate}) and prove the existence of global weak as well as strong solutions in $2D$. Some dissipative estimates of the system can also be revealed (see Lemma \ref{BEL2}), which further imply the long-time convergence of global strong solutions. Besides, in order to obtain higher-order spatial estimates for the velocity $\u$ and the temperature $\theta$, one need to make use of a suitable temperature transformation \eqref{KK} induced by the temperature dependent thermal diffusivity $\kappa$ as well as properties of the Stokes problem with non-constant viscosity (see e.g., \cite{SZ13}).

The current work can be viewed as a preliminary step towards the theoretical analysis of some more refined diffuse-interface models. As we shall see, the $L^\infty$-estimate for the phase function $\phi$ turns out to be crucial in the subsequent analysis. It is an open question whether results similar to those for the system \eqref{1}--\eqref{ini} can be obtained if the Allen--Cahn type equation \eqref{3} is replaced by the Cahn--Hilliard equation \cite{58}, which is a fourth-order parabolic equation preserving the total mass $\int_\Omega \phi dx$ but losing the maximum principle for the phase function $\phi$ (see, e.g., \cite{LS03,AMW,SLX09,GLW}). We refer to \cite{Ab,B,GG10a,GG10b} and the references cited therein for analysis on the isothermal Navier--Stokes--Cahn--Hilliard systems. It is worth mentioning that in the recent work \cite{ERS14,ERS14a}, a thermodynamically consistent diffuse-interface model describing two-phase flows of incompressible fluids in a non-isothermal setting has been proposed and analyzed (with constant surface tension coefficient $\lambda$). It would be interesting to include Marangoni effects in the model studied therein. At last, we also refer to \cite{Z11} for the Cahn--Hilliard--Boussinesq equation with the specific assumption that $\lambda$ and $\kappa$ are positive constants and $\mu=0$.

The remaining part of this paper is organized as follows. In Section 2, we introduce the functional settings and state the main results. Section 3 is devoted to the existence and uniqueness of local strong solutions to problem \eqref{1}--\eqref{ini} with general initial data in both $2D$ and $3D$ cases (see Theorem \ref{str3D}). In Section 4, under suitable assumptions on the $L^\infty$-norm of the initial temperature $\theta_0$, we first establish the existence of global weak solutions (Theorem \ref{weake1}) and then prove the existence as well as uniqueness of global strong solutions in $2D$ (see Theorem \ref{str2D}). In the Appendix, we briefly sketch the semi-Galerkin approximate schemes that are used in the proofs for existence results.

\section{Preliminaries and Main Results}
\setcounter{equation}{0}

\subsection{Functional setup and notations}
Let $X$ be a Banach or Hilbert space, whose norm is denoted by $\|\cdot\|_X$. $X'$ indicates the
dual space of $X$ and $\langle\cdot,\cdot\rangle_{X',X}$ denotes the
corresponding duality products. The boldface letter $\mathbf{X}$ stands for the
vectorial space $X^n$ endowed with the product structure.
We denote by $L^p(\Omega)$ and $W^{m,p}(\Omega)$ the usual  Lebesgue spaces and Sobolev spaces of real measurable functions on the domain $\Omega$. When $p=2$, $W^{m,p}(\Omega)$ will be denoted by $H^m(\Omega)$ and in particular, $H^0(\Omega)=L^2(\Omega)$. $H^1_0(\Omega)$ is the closure of $C_0^\infty(\Omega)$ in the $H^1$-norm, and its dual space is simply denoted by $H^{-1}(\Omega)$.
 We denote the H\"older space on $\Omega$ by $ C^\alpha(\overline{\Omega})$ with $\alpha\in (0, 1)$ . For any $f \in C^\alpha(\overline{\Omega})$, $[f]_\alpha$ represents the H\"older semi-norm of $f$ that $[f]_\alpha =\sup_{x,y\in \Omega, x\neq y} \frac{|f(x)-f(y)|}{|x-y|^\alpha}$.
If $I$ is an interval of $\mathbb{R}^+$, we use the function space $L^p(I;X)$ with $1 \leq p \leq +\infty$, which consists of $p$-integrable
functions with values in the Banach space $X$. For the sake of simplicity, the inner product in $L^2(\Omega)$ and its associate norm will be denoted by $(\cdot,\cdot)$ and  $\|\cdot\|$, respectively.

Let $\mathcal{V}=\{\mathbf{u}\in C_0^\infty(\Omega, \mathbb{R}^n),\ \nabla\cdot \mathbf{u}=0\ \text{in}\ \Omega\}$.
We denote the space $\mathbf{H}$ (or $\mathbf{V}$) the closure of $\mathcal{V}$ in $\mathbf{L}^2(\Omega)$
(or $\mathbf{H}^1_0(\Omega)$):
 \bea
&& \mathbf{H}=\{\u\in \mathbf{L}^2(\Omega): \nabla \cdot \u=0\ \text{in}\ \Omega, \ \ \u\cdot
\mathbf{n}=0 \ \text{on}\ \Gamma\},\non\\
&& \mathbf{V}=\{\u\in
\mathbf{H}_0^1(\Omega): \nabla \cdot \u=0\ \text{in}\ \Omega\}.\non
 \eea
 Hence, $\mathbf{H}$ and $\mathbf{V}$ are Hilbert spaces with norms $\|\cdot\|$ and $\|\cdot\|_{\mathbf{H}^1}$, respectively.

 Let $\Pi$ be the orthogonal projection of $\mathbf{L}^2(\Omega)$ onto $\mathbf{H}$ related to the usual
Helmholtz decomposition. We recall the Stokes operator $S: \mathbf{H}^2(\Omega) \cap
\mathbf{V} \rightarrow \mathbf{H}$ such that for all $\u \in D(S):=\mathbf{H}^2(\Omega) \cap
\mathbf{V}$, $S\u=\Pi (-\Delta \u)= -\Delta \u+\nabla \pi
\in \mathbf{H}$. It is well-known that its inverse $S^{-1}$ is a compact linear operator on $\mathbf{H}$
and $\|S\cdot\|$ gives a norm on $D(S)$ that is equivalent to the usual
$\mathbf{H}^2$-norm. Besides, we have the following estimates (see \cite[Lemma 3.4]{LB99} and \cite{Te01})
\bl\label{stoo}
For any $\u \in D(S)$, consider the Helmholtz decomposition $S\u= -\Delta \u+\nabla \pi$ where the pressure $\pi$ is taken such that $\int_\Omega \pi dx=0$.
Then for any $\nu> 0$, there exists a positive constant $C_\nu$ independent of $\u$, it holds
\be
 \|\pi\| \leq \nu\|S{\u}\|+C_\nu\|\nabla{\u}\|. \label{Stokes I}
 \ee
Moreover, there exists a positive constant $c=c(n, \Omega)$ such that
 \be \|\u\|_{\mathbf{H}^2}+\|\pi\|_{H^1 \backslash
{\mathbb{R}}} \leq c\|S\u\|. \label{Stokes II}
 \ee
\el

In the following text, for two $n\times n$ matrices $M_1, M_2$, we denote $M_1 : M_2=\mathrm{trace}(M_1 M_2^T)$. The upper case letters $C$, $C_i$ will stand for genetic constants
possibly depending on the domain $\Omega$, the coefficients $a$, $b$, $\lambda_0$, $\mu$, $\kappa$, $\gamma$,
 $\varepsilon$ as well as the boundary and initial data, while lower case letters $c$, $c_i$ will denote interpolation/embedding constants that only depend on $\Omega$ and also the spatial dimension $n$. These constants may vary in the same line in the subsequent estimates and their special dependence will be pointed out explicitly in the text if necessary.

\subsection{Main results}

Now we introduce the notions of weak and strong solutions to problem \eqref{1}--\eqref{ini} considered in this paper:

\begin{definition}[Weak solutions]
Let $n=2$. For any $T \in (0, +\infty)$, $(\u_0, \phi_0, \theta_0)$
$\in$ $\mathbf{H} \times (H^1(\Omega)\cap L^\infty(\Omega))\times (H^1_0(\Omega)\cap L^\infty(\Omega))$ with $\phi_b\in H^\frac32(\Gamma)$  and  $\phi_0|_\Gamma=\phi_b$, the triple $(\u, \phi, \theta)$
satisfying
 \bea
  &&\u \in L^\infty(0, T; \mathbf{H}) \cap L^2(0, T;
\mathbf{V}),\label{w1} \\
&&\phi\in L^\infty(0, T; {H}^1(\Omega)\cap L^\infty(\Omega)) \cap L^2(0,
T; {H}^2(\Omega)),\label{w2}\\
&& \theta\in L^\infty(0, T; {H}^1_0(\Omega)\cap L^\infty(\Omega)) \cap L^2(0, T;
{H}^2(\Omega)),\label{w3}\\
&& \u_t\in L^2(0,T; \mathbf{V}'),\quad \phi_t, \theta_t\in L^2(0,T; L^2(\Omega)),
 \eea
is called a weak solution of problem \eqref{1}--\eqref{ini}, if
 \bea
 && \langle \u_t, \mathbf{v}\rangle_{\mathbf{V}', \mathbf{V}}+ \int_\Omega (\u\cdot\nabla)\u\cdot \mathbf{v} dx
 + 2\int_\Omega \mu(\theta) \mathcal{D} {\u} : \mathcal{D}{\mathbf{v}} dx \non\\
 && \qquad = \int_\Omega [\lambda(\theta)\nabla\phi\otimes\nabla\phi] : \nabla \mathbf{v} dx+ \int_\Omega  \theta \mathbf{g} \cdot \mathbf{v} dx,\quad \forall\,
 \mathbf{v}\in \mathbf{V},\non\\
 && \phi_t+\u\cdot \nabla\phi+\gamma (-\Delta \phi+W'(\phi)) =0,\quad \mathrm{a.e.\ in}\  (0, T)\times \Omega, \non\\
 &&  \theta_t + \u\cdot \nabla\theta - \nabla\cdot(\kappa(\theta)\nabla \theta)=0,\qquad \quad \ \mathrm{a.e.\ in}\  (0, T)\times \Omega,\non
 \eea
  and it fulfills the boundary conditions $\phi|_\Gamma =\phi_b$, $\theta|_\Gamma=0$ as well as the initial conditions in \eqref{ini}. Here we simply denote the vector $\mathbf{g}=\mathrm{Ra} g\mathbf{e}_n$.
 \end{definition}
 \begin{remark}
  (1) In the above variational form for the fluid velocity $\u$,  we have used the following facts due to the incompressibility condition \eqref{2} such that for any $\mathbf{v}\in \mathbf{V}$, it holds
 \bea
 \int_\Omega \left\{\nabla \cdot \left[\lambda(\theta)\Big(\frac12|\nabla\phi|^2+ W'(\phi)\Big)\mathbb{I}\right] \right\}\cdot \mathbf{v}\,dx
 =\int_\Omega \nabla P \cdot \mathbf{v}dx= \int_\Omega \mathbf{e}_n\cdot \mathbf{v} dx=0.\non
\eea
We have written the buoyancy force as $(\mathrm{Ra} \theta-\mathrm{Ga})g\mathbf{e}_n=\theta \mathbf{g}-\mathrm{Ga}g\mathbf{e}_n$, then the part $-\mathrm{Ga}g\mathbf{e}_n$ can be simply absorbed into the pressure $P$.

(2) By the interpolation theorem \cite{SI}, it easily follows that $\u\in C([0,T]; \mathbf{H})$ and also $\phi, \theta\in C([0, T]; H^1(\Omega))$. Thus the initial condition \eqref{ini} make sense.
 \end{remark}

 \begin{definition}[Strong solutions] \label{def of strong solution}
Let $n=2,3$. For any $T \in (0, +\infty)$, $(\u_0, \phi_0, \theta_0)$
$\in$ $\mathbf{V} \times H^2(\Omega)\times (H^2(\Omega)\cap H^1_0(\Omega))$ with $\phi_b\in H^\frac52(\Gamma)$ and $\phi_0|_\Gamma=\phi_b$, we say that
the triple $(\u, \phi, \theta)$ is a strong solution to problem
\eqref{1}--\eqref{ini}, if
 \bea
 &&\u \in L^\infty(0, T; \mathbf{V}) \cap L^2(0, T; \mathbf{H}^2(\Omega))\cap H^1(0,T; \mathbf{H}),\non\\
&& \phi \in L^\infty(0, T; {H}^2(\Omega))
\cap L^2(0, T; {H}^3(\Omega)),\non\\
&& \phi_t \in L^\infty(0, T; L^2(\Omega))\cap L^2(0, T; {H}_0^1(\Omega))\non\\
&& \theta \in L^\infty(0, T; {H}^2(\Omega)\cap H^1_0(\Omega))\cap L^2(0, T; {H}^3(\Omega)),\non\\
&& \theta_t \in L^\infty(0, T; L^2(\Omega))\cap L^2(0, T; {H}_0^1(\Omega)),\non
 \eea
 and $(\u, \phi, \theta)$ satisfies the system \eqref{1}--\eqref{4} a.e. in  $[0,T]\times \Omega$ as well as the boundary and initial conditions \eqref{bc1}--\eqref{ini}.
\end{definition}

Now we state the main results of this paper.\medskip

(A) \emph{Local strong solutions in both $2D$ and $3D$}.
\begin{theorem} \label{str3D}
Let $n=2,3$. Suppose that $\mu(\cdot)$ and $\kappa(\cdot)$ fulfill the assumption \eqref{muka}. For any initial data $(\u_0, \phi_0, \theta_0)$
$\in$ $\mathbf{V} \times H^2(\Omega)\times (H^2(\Omega)\cap H^1_0(\Omega))$, $\phi_b \in H^\frac52(\Gamma)$ with $\phi_0|_\Gamma=\phi_b$ satisfying $|\phi_0|\leq 1$ in $\Omega$ and $|\phi_b|\leq 1$ on $\Gamma$, there exists a time $T^*>0$ depending on $\|\u_0\|_{\mathbf{V}}$, $\|\phi_0\|_{H^2}$, $\|\theta_0\|_{H^2}$, $\Omega$ and coefficients of the system such that problem \eqref{1}--\eqref{ini} admits a unique local strong solution $(\u, \phi, \theta)$ on $[0,T^*]$.
\end{theorem}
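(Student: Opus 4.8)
The plan is to construct the solution by a semi-Galerkin approximation (as sketched in the Appendix) in which the velocity equation is discretized on the span of finitely many eigenfunctions of the Stokes operator $S$, while the phase and temperature equations are retained as full parabolic problems driven by the approximate velocity; this is precisely what allows the maximum principles to survive at the discrete level. Existence of the approximate solutions follows from Carath\'eodory's theorem for the velocity coefficients coupled with standard parabolic theory. The real work is to obtain a priori estimates uniform in the discretization parameter and then to pass to the limit. Since the temperature dependence of $\lambda$ destroys the dissipative energy law \eqref{bbel}, I would not try to split off a basic energy estimate; instead I would aim directly at a single closed higher-order differential inequality, which is the content of Lemma \ref{high3d}.

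The first step is to exploit the maximum principles. By Lemma \ref{mphi}, the sign structure of $W'(\phi)=\ve^{-2}(\phi^3-\phi)$ together with $|\phi_0|\le1$ and $|\phi_b|\le1$ forces $|\phi(t,x)|\le1$; by Lemma \ref{mtheta}, the divergence-free convection and the positivity of $\kappa$ give $\|\theta(t)\|_{L^\infty}\le\|\theta_0\|_{L^\infty}$. These pointwise bounds hold independently of any higher-order estimate and, through the $C^2$ hypothesis \eqref{muka}, at once yield positive two-sided bounds for $\mu(\theta)$ and $\kappa(\theta)$, a bound for $\lambda(\theta)$, and bounds for their first two derivatives on the relevant range. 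This is what renders all the coefficient nonlinearities tractable in the estimates that follow.

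For the higher-order inequality I would test each equation against its natural multiplier. In the momentum balance I pair with the Stokes operator $S\u$, using Lemma \ref{stoo} to dispose of the pressure; the two delicate terms are the variable-viscosity term involving $\nabla\mu(\theta)=\mu'(\theta)\nabla\theta$ and $\mathcal{D}\u$, and the capillary stress $\nabla\cdot[\lambda(\theta)\nabla\phi\otimes\nabla\phi]$, the latter being the top-order nonlinearity and demanding $H^2$--$H^3$ control of $\phi$. For the phase equation I test with $\phi_t$ and with $-\Delta\phi_t$ to bound $\phi_t$ in $L^\infty(L^2)\cap L^2(H_0^1)$ and $\|\Delta\phi\|$, using $|\phi|\le1$ to tame $W'(\phi)$ and $W''(\phi)$, and then read the equation as $-\gamma\Delta\phi=-\phi_t-\u\cdot\nabla\phi-\gamma W'(\phi)$ to recover $\phi\in H^3$ by elliptic regularity. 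For the temperature I use the transformation \eqref{KK}: with $K(\theta)=\int_0^\theta\kappa(s)\,ds$ one has $\nabla\cdot(\kappa(\theta)\nabla\theta)=\Delta K(\theta)$, so $\Delta K(\theta)=\theta_t+\u\cdot\nabla\theta$, from which $\theta_t$ and the $H^2$--$H^3$ bounds follow by elliptic regularity, $K$ being a $C^2$ diffeomorphism on the range of $\theta$. Recovering the full spatial $H^2$ norm of $\u$ from $\|S\u\|$ uses the variable-coefficient Stokes regularity of \cite{SZ13}.

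The main obstacle is to close this coupled inequality in the absence of the energy law, that is, to absorb the top-order cross terms into the dissipative terms rather than merely bounding them by Gronwall. I expect the decisive device to be the shift $\hat\theta=\theta-\theta_0$, whose initial datum vanishes: near $t=0$ the norms of $\hat\theta$ are small, and this smallness, borrowed from the argument of \cite{LB99}, is exactly what controls the coupling terms generated by $\nabla\mu(\theta)$ and by $\nabla\lambda(\theta)=-\lambda_0 b\,\nabla\theta$ in the momentum equation. Once a bound of the form $\frac{d}{dt}\mathcal{E}\le C(1+\mathcal{E})^p$ is obtained for a higher-order energy $\mathcal{E}\sim\|\u\|_{\mathbf{V}}^2+\|\phi\|_{H^2}^2+\|\theta\|_{H^2}^2+\|\phi_t\|^2+\|\theta_t\|^2$, a continuation argument furnishes a maximal interval $[0,T^*]$ with $T^*>0$; the limit passage identifies the nonlinear terms by Aubin--Lions compactness, and uniqueness follows from an energy estimate for the difference of two solutions together with Gronwall's lemma, the required regularity being provided by the strong-solution class.
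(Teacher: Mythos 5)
Your overall strategy coincides with the paper's: semi-Galerkin approximation, maximum principles for $\phi$ and $\theta$, a single closed higher-order differential inequality, the shifted temperature $\hat\theta=\theta-\theta_0$ in the spirit of \cite{LB99}, Aubin--Lions compactness, and uniqueness by an energy estimate for differences. However, there are two concrete gaps. First, your approximation scheme keeps the temperature equation as a full PDE precisely so that Lemma \ref{mtheta} survives at the discrete level; but that scheme (Type B in the Appendix) is only justified in the paper for $n=2$ --- the continuous-dependence estimate needed to close the fixed-point argument for the full $\theta$-equation at the available regularity uses $2D$ Sobolev embeddings. For $n=3$ the paper instead Galerkin-discretizes $\theta$ as well (Type A), at the cost of \emph{losing} the maximum principle for $\theta$ along the approximation. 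This forces the preliminary modification of $\mu(\cdot)$ and $\kappa(\cdot)$ outside $[-\|\theta_0\|_{L^\infty},\|\theta_0\|_{L^\infty}]$ (subsection \ref{uplow}), the control of $\|\theta\|_{L^\infty}$ by $\|\nabla\theta\|_{\mathbf{L}^4}$ via Sobolev embedding inside the a priori estimates, and the restoration of the maximum principle only for the limit solution at the very end. Your plan relies on two-sided bounds for $\mu(\theta)$, $\kappa(\theta)$ obtained from the maximum principle "independently of any higher-order estimate", which is exactly what is unavailable in the $3D$ approximation; without the modification step this part of your argument does not go through.

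Second, the closure of the differential inequality is not of the autonomous form $\frac{d}{dt}\mathcal{E}\le C(1+\mathcal{E})^p$ that you assert. Testing the momentum equation with $S\u$ and integrating the viscous term by parts produces the pressure term $-\int_\Omega \pi\,\mu'(\theta)\nabla\theta\cdot S\u\,dx$, whose leading contribution after \eqref{Stokes I} is of the form $\nu\,\|\nabla\theta\|_{\mathbf{L}^4}\|S\u\|^2$: the power of $\|S\u\|$ is already $2$, so it cannot be absorbed by Young's inequality and must instead be subtracted from the dissipation, leaving the conditional coefficient $\underline{\mu}-2\nu(\|\nabla\hat\theta\|_{\mathbf{L}^4}+\|\nabla\theta_0\|_{\mathbf{L}^4})$ in front of $\|S\u\|^2$ as in \eqref{3d1}. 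Since $\|\nabla\hat\theta\|_{\mathbf{L}^4}$ is \emph{not} controlled pointwise in time by the energy functional $\mathcal{H}$, one must propagate its smallness separately: the paper does this through the interpolation \eqref{TheL4} (which gains a small factor from $\int_0^t\|\nabla\hat\theta\|^2$ and $\hat\theta(0)=0$) combined with a continuity/contradiction argument on $[0,T^*]$, and only then invokes the nonlinear Gronwall lemma (Lemma \ref{Gron}). You correctly identify the shift $\hat\theta$ as the decisive device, but the bootstrap that keeps $\|\nabla\hat\theta(t)\|_{\mathbf{L}^4}<1$ on a short interval is the technical heart of the proof and is missing from your plan.
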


(B) \emph{Global weak solutions in $2D$ under bounded initial temperature variation}.

\begin{theorem} \label{weake1}
Let $n=2$. Suppose that $\mu(\cdot)$ and $\kappa(\cdot)$ fulfill the assumption \eqref{muka}. For any initial data $(\u_0, \phi_0, \theta_0)$
$\in$ $\mathbf{H} \times (H^1(\Omega)\cap L^\infty(\Omega))\times
 (H^1_0(\Omega)\cap L^\infty(\Omega))$, $\phi_b\in H^\frac32(\Gamma)$ with $\phi_0|_\Gamma=\phi_b$ satisfying $|\phi_0|\leq 1$ a.e. in $\Omega$ and $|\phi_b|\leq 1$ on $\Gamma$, we consider problem \eqref{1}--\eqref{ini}.

 (1) There exists a constant $\Theta_1>0$ depending only on $\Omega$, the viscosity function $\mu(\cdot)$, and the coefficients $\gamma$, $\lambda_0$, $a$, $b$ $\mathrm{(\text{see}}$ $\eqref{Theta1}$ for its detailed form$\mathrm{)}$, such that if we further assume $\|\theta_0\|_{L^\infty}\leq \Theta_1$, then for
 arbitrary time $T\in(0,+\infty)$, problem \eqref{1}--\eqref{ini} admits at least one global weak solution $(\u, \phi, \theta)$ on $[0,T]$.

 (2) There exists a constant $\Theta_2\in (0, \Theta_1]$ depending on $\Theta_1$, the thermal diffusivity $\kappa(\cdot)$ and  $\Omega$ $\mathrm{(\text{see}}$ $\eqref{2.7}$ for its detailed form$\mathrm{)}$ such that if we further assume $\|\theta_0\|_{L^\infty}\leq \Theta_2$, then problem \eqref{1}--\eqref{ini} admits at least one global weak
solution $(\u, \phi, \theta)$ on $[0,+\infty)$, which is uniformly bounded in $\mathbf{H}\times (H^1(\Omega)\cap L^\infty(\Omega))\times (H^1_0(\Omega)\cap L^\infty(\Omega))$ for all $t\geq 0$.
\end{theorem}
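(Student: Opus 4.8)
```latex
\noindent\textbf{Proof proposal for Theorem \ref{weake1}.}

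The plan is to prove both parts via a semi-Galerkin approximation scheme (sketched in the Appendix), deriving \emph{a priori} estimates that are uniform in the approximation parameter and, crucially for part (2), uniform in time. The governing strategy is to exploit the two maximum principles already available (Lemma \ref{mphi} for $\phi$ and Lemma \ref{mtheta} for $\theta$) to control the temperature-dependent coefficients, and then to recover a \emph{dissipative} energy inequality despite the loss of the clean energy law \eqref{bbel}. First I would record the pointwise bounds: Lemma \ref{mphi} yields $|\phi|\le 1$ a.e.\ (this is what makes $W(\phi)$ and $\nabla\cdot\sigma$ controllable), and Lemma \ref{mtheta} gives $\|\theta(t)\|_{L^\infty}\le \|\theta_0\|_{L^\infty}$ for all $t\ge 0$, so that under $\|\theta_0\|_{L^\infty}\le \Theta_1$ the surface tension $\lambda(\theta)=\lambda_0(a-b\theta)$ stays strictly positive and the viscosity $\mu(\theta)$ stays bounded below on the range of $\theta$. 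The constant $\Theta_1$ must be chosen (this is the content of \eqref{Theta1}) precisely so that these lower bounds survive and so that the perturbation terms created by the $\theta$-dependence of $\lambda$ can be absorbed into the dissipation.

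For the energy-level estimate I would test \eqref{1} with $\u$, test \eqref{3} with $\lambda(\theta)(-\Delta\phi+W'(\phi))$, and sum. In the isothermal case these combine into \eqref{bbel} by the exact cancellation between the capillary stress and the convective term $\u\cdot\nabla\phi$; here that cancellation is broken and produces commutator-type remainders involving $\nabla\lambda(\theta)=-\lambda_0 b\,\nabla\theta$, together with the Boussinesq forcing $\int_\Omega \theta\,\mathbf{g}\cdot\u\,dx$. The key point is that every such remainder carries a factor of $b$, $\|\theta\|_{L^\infty}$, or $\|\nabla\theta\|$, so that when $\|\theta_0\|_{L^\infty}$ is small enough these terms are dominated by the good dissipation $\mu_*\|\nabla\u\|^2+\lambda_*\gamma\|-\Delta\phi+W'(\phi)\|^2$ (with $\mu_*,\lambda_*>0$ the effective lower bounds), after using Young's, Poincar\'e, and in $2D$ the Ladyzhenskaya inequality $\|f\|_{L^4}\le c\|f\|^{1/2}\|\nabla f\|^{1/2}$ to handle the cubic convective/capillary terms. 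This is exactly where the explicit smallness threshold $\Theta_1$ in \eqref{Theta1} enters. Combined with the independent $L^\infty$-bounds on $\phi$ and $\theta$ and the standard parabolic energy estimate for \eqref{4} tested with $\theta$ (yielding $\|\nabla\theta\|\in L^\infty_t$ and $\theta\in L^2_t H^2$ via the transformation \eqref{KK}), these give the bounds \eqref{w1}--\eqref{w3} on any finite $[0,T]$, proving part (1) after passing to the limit in the approximation scheme using Aubin--Lions compactness.

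For part (2), the extra smallness $\Theta_2\le\Theta_1$ must upgrade the finite-time bounds to \emph{uniform-in-time} bounds, i.e.\ a genuinely dissipative differential inequality. I would aim to show that the total energy $\mathcal{E}(t)=\tfrac12\|\u\|^2+\int_\Omega\lambda(\theta)\big(\tfrac12|\nabla\phi|^2+W(\phi)\big)dx$ (plus a controlled temperature contribution) satisfies $\frac{d}{dt}\mathcal{E}+c_0\mathcal{E}\le C$ with $c_0>0$ once $\|\theta_0\|_{L^\infty}\le\Theta_2$, using Poincar\'e on $\u$ (available since $\u\in\mathbf{V}$) and on $\theta$ (available since $\theta|_\Gamma=0$) to convert dissipation into a coercive multiple of the energy, and invoking the exponential decay of $\|\theta(t)\|$ from Lemma \ref{mtheta}/the energy estimate for \eqref{4} to make the forcing integrable. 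A uniform Gronwall argument then yields the claimed uniform boundedness in $\mathbf{H}\times(H^1\cap L^\infty)\times(H^1_0\cap L^\infty)$ for all $t\ge0$; the form \eqref{2.7} of $\Theta_2$ is dictated by requiring the $\kappa(\theta)$-dependent temperature estimates to close with a strictly positive decay rate. The main obstacle throughout is the second step: producing the broken energy identity \emph{cleanly enough} that every non-dissipative remainder is quantitatively tied to $\|\theta\|_{L^\infty}$ (so that smallness genuinely closes the estimate), rather than merely bounded on finite time intervals; this is delicate because $\nabla\lambda(\theta)$ couples the highest-order phase-field dissipation to $\nabla\theta$, and the borderline scaling in $2D$ leaves no room to waste in the Ladyzhenskaya/Young balancing.
```
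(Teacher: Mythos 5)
Your overall architecture (semi-Galerkin scheme preserving both maximum principles, smallness of $\|\theta_0\|_{L^\infty}$ absorbed into the dissipation, Aubin--Lions compactness) matches the paper's, but the central energy estimate as you set it up would not close. You propose to test \eqref{3} with $\lambda(\theta)(-\Delta\phi+W'(\phi))$ and to work with the energy $\int_\Omega\lambda(\theta)\big(\tfrac12|\nabla\phi|^2+W(\phi)\big)dx$. Writing the resulting $\phi_t$-term as a time derivative of that energy produces the remainders
\begin{equation}
-\int_\Omega \lambda'(\theta)\,\theta_t\Big(\tfrac12|\nabla\phi|^2+W(\phi)\Big)dx
\qquad\text{and}\qquad
\int_\Omega \phi_t\,\lambda'(\theta)\,\nabla\theta\cdot\nabla\phi\,dx,\nonumber
\end{equation}
which pair $\theta_t$ (respectively $\phi_t$ and $\nabla\theta$) against $|\nabla\phi|^2$; at the weak-solution level these are not controlled by the available lower-order dissipation, and no factor of $\|\theta\|_{L^\infty}$ rescues them. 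The paper's proof (Proposition \ref{BEL1}) avoids this entirely by exploiting the linear structure $\lambda(\theta)=\lambda_0(a-b\theta)$: it tests \eqref{3} with the \emph{constant-coefficient} multiplier $-a\lambda_0(\Delta\phi-W'(\phi))$, so the energy is $a\lambda_0(\tfrac12\|\nabla\phi\|^2+\int_\Omega W)$ with no $\theta$-dependence, the classical cancellation disposes of the $a\lambda_0$ part of the stress exactly, and the only leftover is $-b\lambda_0\int_\Omega\theta(\nabla\phi\otimes\nabla\phi):\nabla\u\,dx$. That single term is then absorbed into \emph{half} of the Allen--Cahn dissipation $a\lambda_0\gamma\|\Delta\phi-W'(\phi)\|^2$ using $\|\theta\|_{L^\infty}\le\Theta_1$, $|\phi|\le1$, the interpolation $\|\nabla\phi\|_{\mathbf{L}^4}^2\le c_1\|\phi\|_{H^2}\|\phi\|_{L^\infty}$ and the elliptic estimate; this absorption is precisely what fixes the explicit value of $\Theta_1$ in \eqref{Theta1}, which your proposal leaves undetermined.

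For part (2) there is a secondary overstatement: you aim for $\frac{d}{dt}\mathcal{E}+c_0\mathcal{E}\le C$, but the dissipation $\|\Delta\phi-W'(\phi)\|^2$ does not dominate the Ginzburg--Landau energy, so no such coercive inequality is available. The paper instead augments the energy by $\zeta\|\nabla\theta\|^2+\omega\|\theta\|^2$, tests \eqref{4} with $-\zeta\Delta\theta$ and $2\omega\theta$, and chooses $\zeta=\underline{\mu}\,\underline{\kappa}/(4c_3^2\Theta_2^2)$ so that the term $\zeta\int_\Omega(\u\cdot\nabla)\theta\,\Delta\theta\,dx$ costs exactly $\tfrac{\underline{\mu}}{4}\|\nabla\u\|^2$, while the condition \eqref{2.7} on $\Theta_2$ makes $\zeta\int_\Omega\kappa'(\theta)|\nabla\theta|^2\Delta\theta\,dx$ absorbable into $\tfrac{\zeta\underline{\kappa}}{4}\|\Delta\theta\|^2$; the outcome is the monotonicity $\frac{d}{dt}\mathcal{E}+\mathcal{D}\le0$ of Lemma \ref{BEL2}, which already yields the uniform-in-time bound without any coercivity. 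You correctly sense where $\Theta_2$ enters, but the concrete mechanism (the weighted combination and the two balancing conditions) is the missing ingredient.
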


(C) \emph{Global strong solutions in} $2D$ \emph{under bounded initial temperature variation}.

\begin{theorem} \label{str2D}
Let $n=2$. Suppose that $\mu(\cdot)$ and $\kappa(\cdot)$ fulfill the assumption \eqref{muka}. For any initial data $(\u_0, \phi_0, \theta_0)$
$\in$ $\mathbf{V} \times H^2(\Omega)\times (H^2(\Omega)\cap H^1_0(\Omega))$,
 $\phi_b \in H^\frac52(\Gamma)$ with $\phi_0|_\Gamma=\phi_b$ satisfying $|\phi_0|\leq 1$ in $\Omega$, $|\phi_b|\leq 1$ on $\Gamma$, we consider problem \eqref{1}--\eqref{ini}.

 (1) If $\theta_0$ satisfies $\|\theta_0\|_{L^\infty}\leq \Theta_1$ $\mathrm{(}$see \eqref{Theta1}$\mathrm{)}$, then for arbitrary time $T\in(0,+\infty)$, problem \eqref{1}--\eqref{ini} admits a unique global strong solution $(\u, \phi, \theta)$ on $[0,T]$.

 (2) If $\theta_0$ satisfies  $\|\theta_0\|_{L^\infty}\leq \Theta_2$ $\mathrm{(}$see \eqref{2.7}$\mathrm{)}$, then problem \eqref{1}--\eqref{ini} admits a unique global strong solution $(\u, \phi, \theta)$ on $[0,+\infty)$
 that is uniformly bounded in $\mathbf{V}\times H^2(\Omega)\times H^2(\Omega)$ for all $t\geq 0$.
 Moreover, we have
 \be
\lim_{t\to +\infty} (\|\u(t)\|_{\mathbf{V}}+\|\Delta \phi(t)-W'(\phi(t))\|+\|\theta(t)\|_{H^2})=0.\non
\ee
\end{theorem}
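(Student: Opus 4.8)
The plan is to prove Theorem \ref{str2D} by bootstrapping from the global weak solutions of Theorem \ref{weake1} together with the local strong solution theory of Theorem \ref{str3D}, and then closing global-in-time a priori estimates in $2D$. For part (1), given initial data in the strong class with $\|\theta_0\|_{L^\infty}\leq \Theta_1$, Theorem \ref{str3D} already yields a unique local strong solution on some $[0,T^*]$. The key is therefore a \emph{continuation argument}: I would show that the strong norms $\|\u(t)\|_{\mathbf V}+\|\phi(t)\|_{H^2}+\|\theta(t)\|_{H^2}$ cannot blow up in finite time, which forces $T^*$ to be arbitrary. The crucial structural input is that the assumption $\|\theta_0\|_{L^\infty}\leq \Theta_1$, via the maximum principle for $\theta$ (Lemma \ref{mtheta}) and for $\phi$ (Lemma \ref{mphi}), keeps $\|\theta(t)\|_{L^\infty}$ and $\|\phi(t)\|_{L^\infty}$ controlled for all time, and that $\Theta_1$ is precisely calibrated (see \eqref{Theta1}) so that the basic energy-level estimate of Proposition \ref{BEL1} holds globally despite the loss of the dissipative energy law \eqref{bbel}. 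This gives the global-in-time $L^\infty(0,T;\mathbf H)\cap L^2(0,T;\mathbf V)$ bound for $\u$ and the $H^1$-type control of $\phi,\theta$.

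The heart of the argument is the higher-order estimate. I would differentiate, test, and combine the equations in the spirit of the local higher-order differential inequality of Lemma \ref{high3d}, but now exploiting genuinely $2D$ features. Concretely, I expect to control $\frac{d}{dt}\big(\|\nabla \u\|^2+\|\Delta\phi-W'(\phi)\|^2+\|\nabla\theta\|^2\big)$ plus the dissipation terms $\|S\u\|^2$, $\|\nabla(\Delta\phi-W'(\phi))\|^2$ and $\|\Delta\theta\|^2$. In two dimensions the Ladyzhenskaya inequality $\|f\|_{L^4}^2\le c\|f\|\,\|\nabla f\|$ and the logarithmic/Agmon inequalities let me absorb the dangerous nonlinear couplings—most notably the capillary stress $\nabla\cdot(\lambda(\theta)\nabla\phi\otimes\nabla\phi)$ and the temperature transport terms—into the dissipation with only Gronwall-integrable remainders, using the already-established global $L^2_t\mathbf V$ and $L^2_tH^2$ bounds. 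For the temperature I would invoke the transformation \eqref{KK} induced by $\kappa(\theta)$ to recast the quasilinear diffusion into a form with constant-coefficient principal part, and for the velocity I would use the Stokes estimates of Lemma \ref{stoo} together with the non-constant-viscosity Stokes theory so that $\|S\u\|$ dominates $\|\u\|_{\mathbf H^2}$. Once a differential inequality of the form $y'\le C(1+y)^p$ with a globally integrable coefficient is obtained, a $2D$ Gronwall argument gives the bound on $[0,T]$, ruling out blow-up and yielding the global strong solution; uniqueness follows from the local uniqueness in Theorem \ref{str3D} propagated along the interval. The main obstacle here is closing this inequality without any positive upper/lower bounds on $\mu,\kappa$: one must rely on the maximum principle $L^\infty$-bounds for $\theta$ to bound $\mu(\theta),\kappa(\theta)$ and their derivatives pointwise, which is exactly where the hypothesis $\|\theta_0\|_{L^\infty}\le\Theta_1$ and the smoothness \eqref{muka} become indispensable.

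For part (2), the stronger smallness $\|\theta_0\|_{L^\infty}\le\Theta_2$ (see \eqref{2.7}) upgrades the estimates to be \emph{uniform in time}. Here I would use the dissipative estimates of Lemma \ref{BEL2}, which under the $\Theta_2$-threshold furnish a genuine (possibly shifted) energy functional that decays, giving uniform-in-$t$ bounds on $\|\u(t)\|_{\mathbf V}$, $\|\phi(t)\|_{H^2}$ and $\|\theta(t)\|_{H^2}$ rather than bounds growing with $T$. The decay statement $\lim_{t\to+\infty}(\|\u(t)\|_{\mathbf V}+\|\Delta\phi(t)-W'(\phi(t))\|+\|\theta(t)\|_{H^2})=0$ I would obtain by the standard argument combining uniform boundedness with integrability of the dissipation in time: from Proposition \ref{low-estimate} and Lemma \ref{BEL2} the dissipation rate $D(t)$ satisfies $\int_0^\infty D(t)\,dt<\infty$, while uniform higher-order bounds make $\frac{d}{dt}D(t)$ bounded, so $D(t)\to0$; identifying $D(t)$ with the displayed quantity yields the convergence. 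The subtlety in this last step is that $\theta$ solves a convective equation with no forcing and homogeneous Dirichlet data, so $\|\theta(t)\|_{H^2}\to0$ should follow from the energy decay of $\theta$ together with elliptic regularity, but one must check that the transport term $\u\cdot\nabla\theta$ and the quasilinear diffusion do not obstruct the decay—again handled via the $L^\infty$-smallness and the transformation \eqref{KK}.
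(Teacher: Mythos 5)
Your overall strategy (local existence from Theorem \ref{str3D}, global \emph{a priori} bounds, continuation; Proposition \ref{BEL1} and Lemma \ref{BEL2} supplying the low-order control; uniform bounds plus time-integrability of the dissipation to get decay in part (2)) is the same skeleton as the paper's proof via Proposition \ref{high2D}. The gap is in how you propose to close the higher-order estimate for the velocity. You want to test the momentum equation with $S\u$ and absorb the couplings into the dissipation $\|S\u\|^2$. With temperature-dependent viscosity this produces the terms $2\int_\Omega\mu'(\theta)(\nabla\theta\cdot\mathcal{D}\u)\cdot S\u\,dx$ and $-\int_\Omega\pi\,\mu'(\theta)\nabla\theta\cdot S\u\,dx$ (cf.\ $I_2$, $I_3$ in the proof of Lemma \ref{high3d}), which after interpolation contribute a term of size $\nu\,\|\nabla\theta\|_{\mathbf{L}^4}\|S\u\|^2$ on the right-hand side. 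In the local theory this is admissible only because $\|\nabla\hat{\theta}\|_{\mathbf{L}^4}$ is kept small for short time by a contradiction argument; globally, the hypothesis $\|\theta_0\|_{L^\infty}\leq\Theta_1$ controls $\|\theta\|_{L^\infty}$ through the maximum principle but gives no smallness of $\|\nabla\theta\|_{\mathbf{L}^4}$, so the coefficient of $\|S\u\|^2$ cannot be kept positive and the inequality does not close.

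The paper circumvents this by testing with $\u_t$ instead (no pressure term and no $\|S\u\|^2$-sized loss appears; see \eqref{tA-part1} and Lemma \ref{high2d}), working with the functional $\mathcal{Y}(t)=2\int_\Omega\mu(\theta)|\mathcal{D}\u|^2dx+\|\Delta\phi-W'(\phi)\|^2+\eta\|\theta_t\|^2$ --- note $\|\theta_t\|^2$ rather than your $\|\nabla\theta\|^2$ --- and then recovering $\|\Delta\u\|$, $\|\Delta\theta\|$ and $\|\theta\|_{H^3}$ \emph{a posteriori}: $\|\Delta\u\|$ from the variable-viscosity Stokes problem (Lemma \ref{stokes}) and $\|\theta\|_{H^3}$ from elliptic regularity applied to the transformed equation \eqref{newtheta1}. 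Two ingredients of this detour are absent from your outline and are not optional: the H\"older estimate $[\theta]_{C^{\delta/2,\delta}}\leq C$ of Lemma \ref{mthetaH}, without which the constant in the elliptic estimate \eqref{stoe} (which depends on $[\mu]_\delta$) is uncontrolled; and the simultaneous calibration of the small parameter $\epsilon$ and the weight $\eta$ in \eqref{eta}, which is what allows the $\|\u_t\|^2$ and $\|\nabla\theta_t\|^2$ dissipation to absorb the $\|\Delta\u\|^2$ and $\|\theta\|_{H^3}^2$ contributions generated by the capillary stress $\nabla\cdot(\lambda(\theta)\nabla\phi\otimes\nabla\phi)$. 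As written, your scheme stalls at the $\mathbf{H}^2$-estimate for $\u$; the rest of the argument (continuation, uniqueness via the continuous dependence estimate \eqref{conti}, and the decay in part (2) via $\mathcal{Y}\in L^1(0,\infty)$ together with the differential inequality \eqref{diff2}) is consistent with the paper.
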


\begin{remark}
Similar results could be obtained for other types of boundary conditions with minor modifications in the proofs.
For instance, the no-slip boundary condition on $\u$ could be replaced by the free-slip boundary condition, while the Dirichlet boundary condition for $\phi$ could be replaced by the homogeneous Neumann boundary condition that accounts for an angle condition of the interface on the boundary $\partial \Omega$. Besides, the homogeneous Dirichlet boundary condition for $\theta$ can be easily generalized to a non-homogeneous one by using the shifting method in \cite{LB99}.
\end{remark}

\section{Local Well-posedness}\label{locwell}
\setcounter{equation}{0}

In this section, we prove the existence and uniqueness of local strong solutions to problem \eqref{1}--\eqref{ini} in both $2D$ and $3D$.  Due to the highly nonlinear structure of the system, the derivation of proper \emph{a priori} higher-order estimates are much more involved than the case with constant coefficients.

\subsection{Preliminaries}

\subsubsection{Useful inequalities} First, we recall some inequalities that will be  frequently used in this paper. For the sake of convenience, we will interchangeably use the following equivalent norms:
\bea
&& \|\nabla f\|\simeq\|f\|_{H^1}, \quad \forall\, f\in H^1_0(\Omega),\qquad \|\Delta f\|\simeq\|f\|_{H^2}, \quad \forall\, f\in H^2(\Omega)\cap H^1_0(\Omega),\non\\
&& \|\nabla \u\| \simeq \|\u\|_{\mathbf{V}}, \quad \forall\, \u\in \mathbf{V},\qquad \|\Delta \u\|\simeq\|\u\|_{\mathbf{H}^2}, \quad \forall\, \u\in \mathbf{H}^2(\Omega)\cap\mathbf{V}.\non
\eea
Next, for the elliptic boundary value problem $-\Delta g= h$ with nonhomogeneous Dirichlet boundary condition $g|_\Gamma=g_b$, we deduce from the classical elliptic regularity theorem that
\be
\|g\|_{H^{k+2}(\Omega)}\leq c(n, \Omega)\Big(\|h\|_{H^k(\Omega)}+\|g\|+\|g_b\|_{H^\frac{2k+3}{2}(\Gamma)}\Big),\quad k=\{0,1\}.\non
\ee

The following interpolation inequalities can be found in classical literature, e.g., \cite{Te01}:
\begin{lemma}[Gagliardo--Nirenberg inequality]
Let $j, m$ be arbitrary integers satisfying $0\leq j< m$ and let $1\leq q, r\leq +\infty$, $\frac{j}{m}\leq a\leq 1$ such that
$$\frac{1}{p}-\frac{j}{n}=a\left(\frac{1}{r}-\frac{m}{n}\right)+(1-a)\frac{1}{q}.$$
Suppose $\Omega\in \mathbb{R}^n$ is a bounded domain with smooth boundary. For any $f\in W^{m,r}(\Omega)\cap L^q(\Omega)$, there are two constants $c_1, c_2$ such that
$$\|\partial^jf\|_{L^p}\leq c_1\|\partial^m f\|_{L^r}^a\|f\|_{L^q}^{1-a}+c_2\|f\|_{L^q},$$
with the following exception: if $1<r<+\infty$ and $m-j-\frac{n}{r}$ is a nonnegative integer, then the above inequality holds only for $\frac{j}{m}\leq a<1$.
\end{lemma}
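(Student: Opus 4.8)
The inequality is classical (see \cite{Te01}); here I outline the route I would take. The plan is threefold: (i) reduce the bounded-domain estimate to a scale-invariant homogeneous inequality on the whole space $\R^n$ via an extension operator; (ii) prove that homogeneous inequality on $\R^n$ by a scaling argument together with Nirenberg's induction on the order and the dimension; and (iii) transfer the result back to $\O$, with the additive term $c_2\|f\|_{L^q}$ accounting for the lower-order contributions.

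First I would fix a bounded linear extension operator $E$ simultaneously mapping $W^{m,r}(\O)\to W^{m,r}(\R^n)$ and $L^q(\O)\to L^q(\R^n)$; such a universal (Stein-type) operator is available because $\Gamma$ is smooth. It then suffices to establish, for $u\in C_c^\infty(\R^n)$, the homogeneous bound $\|\partial^j u\|_{L^p(\R^n)}\le c\,\|\partial^m u\|_{L^r(\R^n)}^{a}\,\|u\|_{L^q(\R^n)}^{1-a}$, apply it to $u=Ef$, and re-absorb the intermediate-order norms of $Ef$ by interpolation. The admissible exponent relation is dictated by scaling: on $u_\lambda(x)=u(\lambda x)$ one has $\|\partial^j u_\lambda\|_{L^p}=\lambda^{\,j-n/p}\|\partial^j u\|_{L^p}$ and analogous powers in the other two factors, so invariance in $\lambda>0$ forces exactly $\f{1}{p}-\f{j}{n}=a\big(\f{1}{r}-\f{m}{n}\big)+(1-a)\f{1}{q}$. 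The additive term is moreover genuinely necessary on a bounded $\O$: since $\|\partial^m f\|_{L^r}$ vanishes on polynomials of degree below $m$ while $\partial^j f$ need not, no purely multiplicative bound can hold, and the additive norm repairs this degeneracy.

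For the core homogeneous inequality I would use Nirenberg's induction. The base case is one space dimension, where the first-order estimate $\|u'\|_{L^p(\R)}\le c\,\|u''\|_{L^r(\R)}^{a}\|u\|_{L^q(\R)}^{1-a}$ follows from the fundamental theorem of calculus and H\"older's inequality, after optimizing the resulting additive bound in a scaling parameter to reach the multiplicative form. I would then raise the spatial dimension coordinate-by-coordinate by Fubini's theorem and raise the derivative order $j$ by iterating the first-order step, checking that the multiplicative structure and the exponent bookkeeping are preserved at each stage; the constant depends only on $n,m,j$ and the exponents.

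The step I expect to require the most care is the exceptional borderline case $1<r<\infty$ with $k:=m-j-\f{n}{r}$ a nonnegative integer, where the endpoint $a=1$ must be excluded. At $a=1$ the claimed estimate would assert a critical Sobolev-type embedding of $\partial^j f$ into the $L^\infty$-scale (namely $W^{m-j,r}\hookrightarrow L^\infty$ when $k=0$, and the $C^{k}$-analogue with the top derivative bounded when $k>0$), which is precisely the borderline case that fails, the critical embedding landing in $\mathrm{BMO}$ rather than $L^\infty$. I would make this explicit through the standard logarithmic counterexample at $a=1$ and observe that the interpolation used in step (ii) degenerates exactly as the target exponent approaches this critical endpoint; hence the multiplicative inequality survives only for $\f{j}{m}\le a<1$.
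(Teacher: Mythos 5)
The paper offers no proof of this lemma: it is quoted as a classical interpolation inequality with a pointer to the literature (\cite{Te01}), so there is no argument of the paper's own to compare yours against. Your outline is the standard Nirenberg route --- extension to $\mathbb{R}^n$, the scaling computation that forces the exponent relation, induction on dimension and derivative order starting from the one-dimensional estimate, and the $\mathrm{BMO}$/logarithmic obstruction that explains why $a=1$ must be excluded in the exceptional case --- and as a sketch it is sound. The one step I would flag as needing more than a wave of the hand is the return from $\mathbb{R}^n$ to $\Omega$: after applying the homogeneous inequality to $u=Ef$ you control $\|\partial^m(Ef)\|_{L^r(\mathbb{R}^n)}$ only by the full $W^{m,r}(\Omega)$-norm of $f$, not by $\|\partial^m f\|_{L^r(\Omega)}$ alone, so you must invoke an Ehrling-type interpolation of the intermediate derivatives (or the inequality itself at the intermediate orders combined with Young's inequality) to arrive at the stated form $c_1\|\partial^m f\|_{L^r}^a\|f\|_{L^q}^{1-a}+c_2\|f\|_{L^q}$; you mention this re-absorption, but it is precisely where the additive term $c_2\|f\|_{L^q}$ is actually produced, and it deserves to be carried out explicitly rather than deferred.
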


\begin{lemma}[Agmon's inequality] \label{Agmon}
Suppose that $\Omega\in \mathbb{R}^n$ $(n=2,3)$ is a bounded domain with smooth boundary. For any $f\in H^2(\Omega)$, it holds
\be
\|f\|_{L^\infty}\leq c\|f\|_{L^2}^\frac12\|f\|_{H^2}^\frac12, \quad \text{if}\ n=2,\quad \text{and}\ \ \|f\|_{L^\infty}\leq c\|f\|_{H^1}^\frac12\|f\|_{H^2}^\frac12, \quad \text{if}\ n=3.\non
\ee
\end{lemma}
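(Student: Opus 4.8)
The plan is to derive both inequalities from the Gagliardo--Nirenberg inequality stated just above, together with the standard Sobolev embeddings, rather than invoking Fourier analysis directly; this keeps the argument within the tools already at hand. The only genuine subtlety is the exceptional endpoint clause in the Gagliardo--Nirenberg inequality and, for $n=3$, the fact that one cannot reach $L^\infty$ by interpolating \emph{directly} between $H^1$ and $H^2$ at the exponent $a=\tfrac12$, since the critical space $H^{3/2}$ fails to embed into $L^\infty$ in three dimensions. I would circumvent this by a two-step route passing through $W^{1,6}$.

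For $n=2$, I would apply the Gagliardo--Nirenberg inequality with $j=0$, $p=\infty$, $m=2$, $r=q=2$. The compatibility relation forces $a=\tfrac12$; here $m-j-\tfrac{n}{r}=2-0-1=1$ is a nonnegative integer and $1<r<\infty$, so the exceptional clause is in force, but it only requires $a<1$, which holds. This yields
\[
\|f\|_{L^\infty}\le c_1\|\partial^2 f\|_{L^2}^{1/2}\|f\|_{L^2}^{1/2}+c_2\|f\|_{L^2}.
\]
Bounding $\|\partial^2 f\|_{L^2}\le\|f\|_{H^2}$ and rewriting the lower-order term as $c_2\|f\|_{L^2}=c_2\|f\|_{L^2}^{1/2}\|f\|_{L^2}^{1/2}\le c_2\|f\|_{L^2}^{1/2}\|f\|_{H^2}^{1/2}$ (using $\|f\|_{L^2}\le\|f\|_{H^2}$), I obtain the claimed estimate $\|f\|_{L^\infty}\le c\|f\|_{L^2}^{1/2}\|f\|_{H^2}^{1/2}$.

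For $n=3$, I would first apply the Gagliardo--Nirenberg inequality with $j=0$, $p=\infty$, $m=1$, $r=q=6$. The compatibility relation again gives $a=\tfrac12$; now $m-j-\tfrac{n}{r}=1-0-\tfrac12=\tfrac12$ is not a nonnegative integer, so no exceptional restriction arises, and this produces $\|f\|_{L^\infty}\le c_1\|\nabla f\|_{L^6}^{1/2}\|f\|_{L^6}^{1/2}+c_2\|f\|_{L^6}$. I would then invoke the Sobolev embedding $H^1(\Omega)\hookrightarrow L^6(\Omega)$, valid for $n=3$, to get $\|f\|_{L^6}\le c\|f\|_{H^1}$ and, applied componentwise to $\nabla f\in\mathbf{H}^1(\Omega)$, $\|\nabla f\|_{L^6}\le c\|\nabla f\|_{H^1}\le c\|f\|_{H^2}$. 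Substituting, the leading term becomes $c\|f\|_{H^2}^{1/2}\|f\|_{H^1}^{1/2}$, and absorbing the lower-order term exactly as in the two-dimensional case (now using $\|f\|_{H^1}\le\|f\|_{H^2}$) produces $\|f\|_{L^\infty}\le c\|f\|_{H^1}^{1/2}\|f\|_{H^2}^{1/2}$.

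The main obstacle is not any single estimate but the endpoint character of the three-dimensional inequality: a one-step Gagliardo--Nirenberg interpolation from $H^2$ down to $L^2$ (with $j=0$, $p=\infty$, $m=2$, $r=q=2$) only yields $\|f\|_{L^\infty}\le c\|f\|_{H^2}^{3/4}\|f\|_{L^2}^{1/4}$, which is strictly weaker in its dependence than the stated $H^1$--$H^2$ form. Interpolating one derivative higher and then trading the gained integrability through the critical Sobolev embedding $H^1\hookrightarrow L^6$ is precisely what recovers the sharp exponents $\tfrac12,\tfrac12$. Alternatively, one may prove both inequalities by the classical Fourier frequency-splitting argument on $\mathbb{R}^n$ — Cauchy--Schwarz over $\{|\xi|\le N\}$ and $\{|\xi|>N\}$ followed by optimization in $N$ — combined with a bounded Stein extension operator $E:H^2(\Omega)\to H^2(\mathbb{R}^n)$ that is simultaneously bounded on $H^1(\Omega)$; this is the route underlying the cited classical references, but the Gagliardo--Nirenberg derivation above is more self-contained.
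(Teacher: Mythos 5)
The paper itself offers no proof of this lemma: it is stated as a classical interpolation result and simply cited from the literature (e.g., Temam's book), so there is no argument of the author's to compare yours against. Your derivation, however, is correct and is a legitimately self-contained route given the Gagliardo--Nirenberg lemma stated immediately before it. The exponent computations check out: in $2D$ the relation $0=a(\tfrac12-1)+(1-a)\tfrac12$ forces $a=\tfrac12$, and you correctly note that the exceptional clause ($m-j-\tfrac{n}{r}=1$, a nonnegative integer) is harmless since it only excludes $a=1$; in $3D$ the relation with $m=1$, $r=q=6$ again gives $a=\tfrac12$ with no exceptional restriction, and the embedding $H^1(\Omega)\hookrightarrow L^6(\Omega)$ (applied also componentwise to $\nabla f$, which lies in $\mathbf{H}^1$ since $f\in H^2$) converts the $W^{1,6}$ quantities into the claimed $H^1$--$H^2$ product. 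Your handling of the lower-order terms by $\|f\|_{L^2}\le\|f\|_{L^2}^{1/2}\|f\|_{H^2}^{1/2}$ and $\|f\|_{H^1}\le\|f\|_{H^1}^{1/2}\|f\|_{H^2}^{1/2}$ is also fine. The remark motivating the two-step route is accurate and worth keeping: a one-step interpolation in $3D$ ($j=0$, $m=2$, $r=q=2$) yields $a=\tfrac34$ and hence only $\|f\|_{L^\infty}\le c\|f\|_{H^2}^{3/4}\|f\|_{L^2}^{1/4}$, which is not the form stated in the lemma, so passing through $W^{1,6}$ and the critical Sobolev embedding is genuinely necessary to land on the $\tfrac12,\tfrac12$ exponents. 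The alternative Fourier frequency-splitting-plus-extension argument you sketch at the end is the textbook proof underlying the cited references; either route would serve, but yours has the advantage of using only tools already displayed in the paper.
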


\subsubsection{Maximum principles for $\phi$ and $\theta$}
One important feature of problem \eqref{1}--\eqref{ini} is that the phase function $\phi$ and the temperature $\theta$ satisfy suitable weak maximum principles. These facts will be important in our subsequent proofs.

First, thanks to the double-well structure of $W(\phi)$ (see \eqref{W}), similar to the simplified Ericksen--Leslie system for nematic liquid crystal flows \cite{LL95, C09}, one can easily prove that
\begin{lemma} \label{mphi} Let $n=2,3$. Consider the initial boundary value problem
 \be
 \begin{cases}
& \phi_t+ \u\cdot \nabla\phi=\gamma (\Delta \phi- W'(\phi)), \quad (t,x)\in (0,T)\times \Omega,\\
& \phi|_\Gamma=\phi_b(x), \qquad\qquad \qquad \qquad  \ \ (t,x)\in (0,T)\times \Gamma,\\
& \phi|_{t=0}=\phi_0(x), \qquad\qquad\qquad \qquad  x\in \Omega. \label{mmphi}
\end{cases}
 \ee
Suppose that $\u \in L^\infty(0, T; \mathbf{H}) \cap L^2(0, T;
\mathbf{V})$, $\phi_0\in  H^1(\Omega)$, $\phi_b \in H^\frac32(\Gamma)$ and $\phi_0|_\Gamma=\phi_b$ satisfying $|\phi_0|\leq 1$ a.e. in $\Omega$, $|\phi_b|\leq 1$ on $\Gamma$.  If $\phi\in L^\infty(0,T; H^1(\Omega))\cap L^2(0, T; H^2(\Omega))$ is a weak solution to problem \eqref{mmphi}, then  $|\phi(t,x)| \leq 1$, a.e. in $\Omega$ for all $t\in [0,T]$.
\end{lemma}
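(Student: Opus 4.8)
The plan is to prove the maximum principle $|\phi(t,x)|\le 1$ by a Stampacchia-type truncation argument, exploiting the crucial sign property of the nonlinearity $W'(\phi)=\frac{1}{\varepsilon^2}(\phi^3-\phi)=\frac{1}{\varepsilon^2}\phi(\phi^2-1)$. The key observation is that when $\phi>1$ one has $W'(\phi)>0$, so the reaction term $-\gamma W'(\phi)$ pushes $\phi$ back down; symmetrically, when $\phi<-1$ one has $W'(\phi)<0$, so the reaction pushes $\phi$ back up. Thus the constants $\pm 1$ act as barriers. By symmetry it suffices to prove $\phi\le 1$, and the bound $\phi\ge -1$ follows from an identical argument applied to $-\phi$ (which solves the same type of equation since $W'$ is odd).

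To establish $\phi\le 1$, I would test the equation with the truncated function $(\phi-1)^+=\max\{\phi-1,0\}\in L^2(0,T;H^1_0(\Omega))$. This truncation is admissible as a test function precisely because the boundary datum satisfies $|\phi_b|\le 1$, so $(\phi-1)^+$ vanishes on $\Gamma$ and lies in $H^1_0(\Omega)$; moreover $(\phi_0-1)^+=0$ a.e.\ since $|\phi_0|\le 1$. Multiplying the equation by $(\phi-1)^+$ and integrating over $\Omega$ yields
\be
\frac12\frac{d}{dt}\|(\phi-1)^+\|^2+\int_\Omega \u\cdot\nabla\phi\,(\phi-1)^+\,dx+\gamma\|\nabla(\phi-1)^+\|^2+\gamma\int_\Omega W'(\phi)(\phi-1)^+\,dx=0,\non
\ee
where I have used that $\nabla\phi\cdot\nabla(\phi-1)^+=|\nabla(\phi-1)^+|^2$ on the set $\{\phi>1\}$ and $\Delta\phi$ integrates by parts against a function vanishing on $\Gamma$. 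The convection term vanishes: since $\nabla\cdot\u=0$ and $\u\cdot\mathbf n=0$ on $\Gamma$, one has $\int_\Omega \u\cdot\nabla\phi\,(\phi-1)^+\,dx=\int_\Omega \u\cdot\nabla[G(\phi)]\,dx=0$ where $G'(s)=(s-1)^+$, after an integration by parts. On the set where $(\phi-1)^+>0$ we have $\phi>1$, hence $W'(\phi)>0$ and the reaction integral is nonnegative. Dropping the two nonnegative gradient and reaction terms gives $\frac{d}{dt}\|(\phi-1)^+\|^2\le 0$, and combined with $\|(\phi-1)^+(0)\|=0$ we conclude $(\phi-1)^+\equiv 0$, i.e.\ $\phi\le 1$ a.e.

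The main technical obstacle is justifying these formal manipulations at the regularity level $\phi\in L^\infty(0,T;H^1)\cap L^2(0,T;H^2)$ of a weak solution, rather than a smooth one. In particular, I need $(\phi-1)^+$ to be a legitimate test function and the time-derivative identity $\int_\Omega \phi_t(\phi-1)^+\,dx=\frac12\frac{d}{dt}\|(\phi-1)^+\|^2$ to hold; this requires a chain-rule lemma for the composition of a Lipschitz convex function with an $H^1$-valued function having $L^2$ time derivative (a standard result, e.g.\ via Lions--Magenes or the Alt--Luckhaus framework), together with an approximation of the truncation $s\mapsto(s-1)^+$ by smooth monotone functions to make the integration by parts rigorous. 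One must also check that the convection term genuinely vanishes for merely $H^2$-in-space $\phi$, which follows from the divergence-free and tangency conditions on $\u$ after the integration by parts. Once these regularity justifications are in place, the sign structure of $W'$ does all the remaining work, and the argument closes cleanly for each $t\in[0,T]$.
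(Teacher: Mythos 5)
Your proof is correct and is essentially the argument the paper has in mind: the paper omits the proof of Lemma \ref{mphi}, citing \cite{LL95,C09}, where the analogous maximum principle is obtained by exactly this truncation--sign argument (test with $(\phi-1)^+$, use $W'(s)>0$ for $s>1$, the divergence-free/tangency conditions to kill the convection term, and oddness of $W'$ for the lower bound). The technical caveats you flag (chain rule for the time derivative, admissibility of the truncated test function) are the right ones and are standard at the stated regularity.
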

Next, concerning the temperature equation with convection, we have the following $L^\infty$-estimate for $\theta$ (see \cite{LB99}):
\begin{lemma} \label{mtheta}
Let $n=2, 3$ and $\kappa(\cdot)$ satisfies \eqref{muka}. Consider the initial boundary value problem
 \be
 \begin{cases}
&\theta_t+ \u\cdot \nabla\theta=\nabla\cdot(\kappa(\theta)\nabla \theta),\quad (t,x)\in (0,T)\times \Omega,\\
&\theta|_\Gamma=0, \qquad\qquad \qquad\qquad \quad\  (t,x)\in  (0,T)\times\Gamma, \\
& \theta|_{t=0}=\theta_0(x),\qquad\qquad\qquad\quad  x\in \Omega.
\end{cases}\label{eqthe}
 \ee
Suppose that $\theta_0(x)\in  L^\infty(\Omega)$ and $\u \in L^\infty(0, T; \mathbf{H}) \cap L^2(0, T;
\mathbf{V})$, in addition, when $n=3$ we also assume $\u \in L^\infty(0, T; \mathbf{L}^3(\Omega))$.
If $\theta\in L^\infty(0,T;  L^2(\Omega))\cap
L^2(0, T; H^1_0(\Omega))$ is a weak solution of problem \eqref{eqthe},
 then $\|\theta(t)\|_{L^\infty(\Omega)}\leq \|\theta_0\|_{L^\infty(\Omega)}$ for all $t\in [0,T]$.
\end{lemma}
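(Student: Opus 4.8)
The plan is to prove both bounds $\theta \le \|\theta_0\|_{L^\infty}$ and $\theta \ge -\|\theta_0\|_{L^\infty}$ by Stampacchia's truncation method, testing the weak formulation of \eqref{eqthe} against the positive part of a shifted temperature. Set $M := \|\theta_0\|_{L^\infty(\Omega)}$; one may assume $M>0$, since $M=0$ forces $\theta_0=0$ and the conclusion is immediate.

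For the upper bound, I would introduce the truncation $\eta := (\theta-M)_+ = \max\{\theta-M,0\}$. Because $\theta=0\le M$ on $\Gamma$ and $\theta\in L^2(0,T;H^1_0(\Omega))$, and since $s\mapsto (s-M)_+$ is Lipschitz and vanishes for $s\le M$, the composition satisfies $\eta\in L^2(0,T;H^1_0(\Omega))$ with $\nabla\eta = \mathbf{1}_{\{\theta>M\}}\nabla\theta$. Using $\eta$ as a test function, I treat each term as follows. The time term pairs as $\langle \theta_t,\eta\rangle = \tfrac12\frac{d}{dt}\|\eta\|^2$, valid by the chain rule for functions in $L^2(0,T;H^1_0(\Omega))$ whose distributional time derivative lies in the dual space. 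The convection term vanishes: writing $(\u\cdot\nabla\theta)\eta = \u\cdot\nabla(\tfrac12\eta^2)$ on $\{\theta>M\}$ and integrating by parts, the divergence-free condition together with $\eta\in H^1_0(\Omega)$ gives $\int_\Omega (\u\cdot\nabla\theta)\eta\,dx = -\tfrac12\int_\Omega (\nabla\cdot\u)\eta^2\,dx = 0$. The diffusion term yields, after integration by parts with no boundary contribution since $\eta\in H^1_0(\Omega)$,
$$ -\int_\Omega \kappa(\theta)\nabla\theta\cdot\nabla\eta\,dx = -\int_\Omega \kappa(\theta)|\nabla\eta|^2\,dx \le 0, $$
using $\kappa(\theta)>0$ from \eqref{muka}. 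Collecting these, I obtain $\frac{d}{dt}\|\eta\|^2\le 0$, hence $\|\eta(t)\|^2 \le \|\eta(0)\|^2 = \|(\theta_0-M)_+\|^2 = 0$ because $\theta_0\le M$ a.e. Thus $\eta\equiv 0$, i.e. $\theta\le M$ a.e.

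For the lower bound I would apply the identical argument to $w:=-\theta$, which solves $w_t+\u\cdot\nabla w = \nabla\cdot(\tilde\kappa(w)\nabla w)$ with $\tilde\kappa(w):=\kappa(-w)>0$, homogeneous Dirichlet data, and $\|w_0\|_{L^\infty}=M$; this gives $-\theta\le M$, i.e. $\theta\ge -M$. Combining the two bounds yields $|\theta(t,x)|\le M=\|\theta_0\|_{L^\infty}$ a.e. for all $t\in[0,T]$.

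I expect the main obstacle to be the rigorous justification of using $\eta$ as a test function, rather than the formal computation, which is transparent. Specifically, the identity $\langle \theta_t,\eta\rangle = \tfrac12\frac{d}{dt}\|\eta\|^2$ requires the chain rule for the Lipschitz truncation composed with a function of regularity class $\theta\in L^2(0,T;H^1_0)$ with $\theta_t$ in its dual, and one must verify that all products are integrable — this is where in the $3D$ case the extra hypothesis $\u\in L^\infty(0,T;\mathbf{L}^3(\Omega))$ enters, guaranteeing that the convection term $\u\cdot\nabla\theta$ pairs admissibly against $\eta\in L^2(0,T;H^1_0)\hookrightarrow L^2(0,T;L^6)$. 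Alternatively, and more cleanly, I would carry out the estimate at the level of the Galerkin/approximate solutions from the Appendix, where the requisite smoothness holds, and then pass to the limit, since the bound $\|\theta(t)\|_{L^\infty}\le\|\theta_0\|_{L^\infty}$ is preserved under the relevant convergences.
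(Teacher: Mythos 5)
The paper gives no proof of this lemma; it simply cites \cite{LB99}, where the bound is obtained by exactly the truncation argument you propose. Your Stampacchia argument is correct: testing with $(\theta-M)_+$ kills the convection term by incompressibility, the diffusion term has a sign because $\kappa>0$, and the initial truncation vanishes, so both one-sided bounds follow. You also correctly identify the only delicate points, namely the chain rule $\langle\theta_t,(\theta-M)_+\rangle=\tfrac12\frac{d}{dt}\|(\theta-M)_+\|^2$ and the integrability of $\u\cdot\nabla(\eta^2)$, which in $3D$ needs $\u\in L^\infty(0,T;\mathbf{L}^3)$ against $\eta\in L^6$ and $\nabla\eta\in L^2$ (in $2D$ the interpolation $\u\in L^4(0,T;\mathbf{L}^4)$ suffices); your fallback of running the estimate on the approximate solutions and passing to the limit is the clean way to settle these, and also sidesteps the minor circularity that $\kappa(\theta)\nabla\theta$ need not a priori lie in $L^2$ before the $L^\infty$ bound is known.
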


\subsubsection{Modifications of the viscosity and thermal diffusivity} \label{uplow}

  Thanks to the maximum principle for the temperature $\theta$ (i.e., Lemma \ref{mtheta}),
  in the study of local strong solutions ($n=2,3$), inspired by the argument in \cite{LB96,LB99}, one can first transform the original problem \eqref{1}--\eqref{ini} into an
equivalent one by properly modifying the fluid viscosity $\mu(\cdot)$ and the thermal diffusivity $\kappa(\cdot)$ that are only assumed to satisfy \eqref{muka} \emph{outside} the interval $[-\|\theta_0\|_{L^\infty}, \|\theta_0\|_{L^\infty}]$ (here assuming that $\|\theta_0\|_{L^\infty}>0$). In particular, this type of modification will have no influence on the $L^\infty$-estimate for $\theta$.

\begin{remark}
The case $\|\theta_0\|_{L^\infty}=0$ is trival. Indeed, by Lemma \ref{mtheta}, we have $\theta=0$ and thus problem \eqref{1}--\eqref{ini} will be simply reduced to the isothermal Navier--Stokes--Allen--Cahn system \cite{GG10, GG10b}. Therefore, throughout the paper, we shall focus on the nontrivial case $\|\theta_0\|_{L^\infty}>0$.
\end{remark}

The required modification can be constructed in a simple way. For instance, taking $$r=\frac13\|\theta_0\|_{L^\infty}>0,$$ we introduce the cut-off function $h(s)\in C^\infty_0(\mathbb{R})$ such that
\begin{equation}
h(s)=(\mathbf{1}_{[-4r, 4r]} * g_r)(s)=\int_\mathbb{R} \mathbf{1}_{[-4r, 4r]}(\tau) g_r(s-\tau) d\tau,\non
\end{equation}
where $\mathbf{1}_{[-4r, 4r]}$ is the characteristic function on the interval $[-4r,4r]$ and
\begin{equation}
g_r(s)=\Big(\int_\mathbb{R}g(s)ds\Big)^{-1} r^{-1} g\Big(\frac{s}{r}\Big) \quad\mathrm{with}\quad
g(s)=
\begin{cases}
\exp\big(\frac{1}{s^2-1}\big), \quad\, \mathrm{if}\ |s|<1,\\
0, \qquad \qquad\quad \, \mathrm{if}\ |s|\geq 1.
\end{cases}\non
\end{equation}
It easily follows that $h(s)=1$ for $|s|\leq 3r$, $0< h(s)<1$ for $|s|\in (3r,5r)$, and $h(s)=0$ for $|s|\geq 5r$. Denote
\bea
&&\underline{\mu}=\frac12\inf\{\mu(s): |s|\leq 5r\},\quad \overline{\mu}=2\sup\{\mu(s): |s|\leq 5r\},\non\\
&&\underline{\kappa}=\frac12\inf\{\kappa(s): |s|\leq 5r\},\quad \overline{\kappa}=2\sup\{\kappa(s): |s|\leq 5r\}.\non
\eea
Then we set for $s\in \mathbb{R}$
\begin{equation}
\mu^*(s)=(\mu(s)-\underline{\mu})h(s)+\underline{\mu}\quad \mathrm{and}\quad \kappa^*(s)=(\kappa(s)-\underline{\kappa})h(s)+\underline{\kappa}.\non
\end{equation}
It is easy to verify that the modified functions $\mu^*(\cdot)$ and $\kappa^*(\cdot)$ belong to $C^2(\mathbb{R})$ and satisfy
\begin{align}
&\mu^*(s)=\mu(s),\quad \kappa^*(s)=\kappa(s),\qquad \forall\, s\in [-\|\theta_0\|_{L^\infty},\|\theta_0\|_{L^\infty}].\non
\end{align}
Moreover,  $\mu^*(\cdot)$ and $\kappa^*(\cdot)$ are positive constants outside the interval $(-5r, 5r)$. As a result,
\begin{align}
&0<\underline{\mu}\leq \mu^*(s)\leq \overline{\mu},\quad 0<\underline{\kappa}\leq \kappa^*(s)\leq \overline{\kappa},\quad\ \ \forall\, s\in \mathbb{R},\label{bdee}\\
&(\mu^*)',\ \  (\mu^*)'',\ \  (\kappa^*)', \ \ (\kappa^*)''\ \ \mathrm{are\ bounded},\quad \forall\, s\in\mathbb{R}.\label{bdeeb}
\end{align}

\begin{remark}\label{uplowremark}
In the remaining part of Section \ref{locwell}, we shall assume that the fluid viscosity $\mu(\cdot)$ and thermal diffusivity $\kappa(\cdot)$ are modified to be $\mu^*(\cdot)$ and $\kappa^*(\cdot)$ respectively as in the above argument and drop the superscript $^*$ for the sake of simplicity. Then necessary estimates are always obtained along with these modified coefficients satisfying the properties \eqref{bdee}, \eqref{bdeeb}.
\end{remark}

\subsubsection{Variable transformation for $\theta$}
In order to overcome the difficulty from the temperature-dependence of the thermal diffusivity $\kappa$ in equation \eqref{4}, we introduce the new variable $\vartheta$ in sprit of \cite{SZ13}:
  \be
  \vartheta=\int_0^\theta \kappa(s) ds.\label{KK}
  \ee
 Since $\kappa$ is a positive $C^2$ function, there exists a strictly increasing $C^3$ function $\chi(\cdot)$ such that $\chi(\vartheta)=\chi\big(\int_0^\theta \kappa(s) ds\big)=\theta$ and
$$\chi'(\vartheta)=\frac{1}{\kappa(\theta)}, \quad \chi''(\vartheta)=-\frac{\kappa'(\theta)}{\kappa^3(\theta)},\quad \chi'''(\vartheta)=-\frac{\kappa''(\theta)}{\kappa^4(\theta)}+\frac{3\kappa'(\theta)^2}{\kappa^5(\theta)}.
$$
Under the transformation \eqref{KK},  equation \eqref{4} for $\theta$ can be re-written into the following form in terms of the new variable $\vartheta$
\be
\begin{cases}
 \vartheta_t+\u\cdot \nabla \vartheta-\frac{1}{\chi'(\vartheta)}\Delta \vartheta=0,\quad\qquad  (t,x)\in (0,T)\times \Omega,\\
 \vartheta|_\Gamma=0, \qquad\qquad \qquad \qquad \qquad\quad \ (t,x)\in  (0,T)\times\Gamma,\\
 \vartheta|_{t=0}=\vartheta_0(x)=\int_0^{\theta_0(x)}\kappa(s)ds, \qquad x\in\Omega.
 \end{cases}\label{newtheta1}
\ee
 On the other hand, by definition of $\vartheta$, we can deduce from subsection \ref{uplow} (see Remark \ref{uplowremark}) the following estimates and relations on norms of $\theta$ and $\vartheta$:
 \be
 \|\vartheta_0\|_{L^\infty}\leq \overline{\kappa}\|\theta_0\|_{L^\infty},\quad  \|\vartheta_0\|_{H^1}\leq \overline{\kappa}\|\theta_0\|_{H^1},\quad \underline{\kappa}\leq \frac{1}{\chi'(\vartheta)}\leq \overline{\kappa}.\label{temesa}
 \ee
 \be
  \underline{\kappa}\|\theta_t\| \leq \|\vartheta_t\|\leq \overline{\kappa}\|\theta_t\|,\qquad  \underline{\kappa} \|\nabla \theta\| \leq \|\nabla \vartheta\| \leq  \overline{\kappa}\|\nabla \theta\|,\label{ttt1}
 \ee
 \bea
  \|\nabla \vartheta_t\|
  &\leq& \|\kappa'(\theta)\|_{L^\infty}\|\theta_t\nabla\theta\|+\|\kappa(\theta)\|_{L^\infty}\|\nabla \theta_t\|\non\\
 &\leq& C(\|\theta_t\|_{L^4}\|\nabla\theta\|_{\mathbf{L}^4}+ \|\nabla \theta_t\|),\label{ttt2}
 \eea
 \bea
 \|\nabla^2 \theta\| &\leq& \|\chi'(\vartheta)\|_{L^\infty}\|\nabla^2 \vartheta\|+ \|\chi''(\vartheta)\|_{L^\infty}\|\nabla \vartheta\|^2_{\mathbf{L}^4}\non\\
 &\leq& C(\|\nabla^2\vartheta\|+\|\nabla \vartheta\|_{\mathbf{L}^4}^2),\label{ttt3}
 \eea
 \bea
 \|\nabla^2 \vartheta\|&\leq &\|\kappa(\theta)\|_{L^\infty}\|\nabla^2 \theta\|+ \|\kappa'(\theta)\|_{L^\infty}\|\nabla \theta\|^2_{\mathbf{L}^4}\non\\
 &\leq& C(\|\nabla^2\theta\|+\|\nabla \theta\|_{\mathbf{L}^4}^2),\label{ttt4}
 \eea
 \bea
 \|\nabla^3 \theta\|&\leq & \|\chi'(\vartheta)\|_{L^\infty}\|\nabla^3\vartheta\|+3\|\chi''(\vartheta)\|_{L^\infty}\|\nabla^2\vartheta\|_{\mathbf{L}^4}\|\nabla \vartheta\|_{\mathbf{L}^4}\non\\
 && +\|\chi'''(\vartheta)\|_{L^\infty}\|\nabla \vartheta\|_{\mathbf{L}^6}^3\non\\
 &\leq& C(\|\nabla^3 \vartheta\|+\|\nabla^2\vartheta\|_{\mathbf{L}^4}\|\nabla \vartheta\|_{\mathbf{L}^4}+\|\nabla \vartheta\|_{\mathbf{L}^6}^3),\label{ttt5}
 \eea
 \bea
\|\nabla^3 \vartheta\|&\leq& C(\|\nabla^3 \theta\|+\|\nabla^2\theta\|_{\mathbf{L}^4}\|\nabla \theta\|_{\mathbf{L}^4}+\|\nabla \theta\|_{\mathbf{L}^6}^3),\label{ttt6}
\eea
 where the constant $C$ only depends on the domain $\Omega$ as well as the upper and lower bounds of the modified thermal diffusivity $\kappa$ given in subsection \ref{uplow} (recall also Remark \ref{uplowremark}).

The following elementary estimates on parabolic equation with convection will be useful in the subsequent proofs (we refer to \cite[Section 4]{SZ13} for the case $n=2$, while the case $n=3$ can be proved in a similar way using the Sobolev embedding theorems in $3D$):
\bl \label{newt}
For $n=2,3$, consider the following parabolic problem
\be
\begin{cases}
 \vartheta_t+\u\cdot \nabla \vartheta-\tilde{\kappa}(\vartheta)\Delta \vartheta=0,\quad (t,x)\in (0,T)\times \Omega,\\
 \vartheta|_\Gamma=0, \qquad \qquad \qquad \qquad \quad (t,x)\in  (0,T)\times\Gamma,\\
 \vartheta|_{t=0}=\vartheta_0, \qquad \qquad\qquad \qquad  x\in \Omega.
 \end{cases}\label{newtheta}
\ee
Suppose that $\tilde{\kappa}(\cdot)$ is a smooth function with positive upper and lower bounds $0<\kappa_L\leq \tilde{\kappa}(s)\leq \kappa_U<+\infty$ for $s\in \mathbb{R}$, then the solution $\vartheta$ to problem \eqref{newtheta} satisfies the following differential inequality
\be
\frac{d}{dt}\|\nabla \vartheta\|^2+\frac{1}{\kappa_U}\|\vartheta_t\|^2\leq
\begin{cases}
C(1+\|\u\|^2\|\nabla \u\|^2)\|\nabla \vartheta\|^2,\quad \text{for}\ n=2,\\
C(1+\|\nabla \u\|^4)\|\nabla \vartheta\|^2 ,\qquad \ \ \ \text{for}\ n=3,\\
\end{cases}
\ee
where the constant $C$ depends on $\Omega, n, \kappa_U$. Besides, it holds
\be
\|\nabla^2 \vartheta\|\leq \begin{cases}
C(\|\vartheta_t\|+\|\u\|\|\nabla \u\|\|\nabla \vartheta\|),\quad \text{for}\ n=2,\\
C(\|\vartheta_t\|+\|\nabla \u\|^2\|\nabla \vartheta\|),\qquad\,\text{for}\ n=3,
\end{cases}\label{h2the}
\ee
where the constant $C$ depends on $\Omega, n, \kappa_L$.
\el

\subsection{\emph{A priori} estimates}

In this section, we derive a specific differential inequality that will be crucial in obtaining higher-order estimates for the strong solutions to problem \eqref{1}--\eqref{ini}.

Inspired by \cite{LB99}, it is useful to introduce the following shifted temperature
\be
\hat{\theta}=\theta-\theta_0,\label{hattheta}
\ee
 which satisfies the following parabolic equation subject to homogeneous Dirichlet boundary condition and zero initial datum:
\be
 \begin{cases}
&\hat{\theta}_t+ \u\cdot \nabla \hat{\theta}-\nabla \cdot(\kappa(\hat{\theta}+\theta_0)\nabla \hat{\theta})=-\u\cdot \nabla \theta_0+\nabla\cdot(\kappa(\hat{\theta}+\theta_0)\nabla \theta_0),\\
&\qquad \qquad\qquad\qquad\qquad\qquad\qquad\qquad\qquad \quad \ (t,x)\in (0,T)\times \Omega,\\
&\hat{\theta}|_\Gamma=0, \qquad\qquad \qquad  \ (t,x)\in  (0,T)\times\Gamma, \\
&\hat{\theta}|_{t=0}=0,\qquad\qquad\quad \ \ x\in \Omega.
\end{cases}\label{hateqthe}
 \ee
Next, we define the functional
\bea
\mathcal{H}(t)&=&\|\u\|^2+\|\nabla \u\|^2+2\int_\Omega \mu(\theta)|\mathcal{D} \u|^2 dx+a\lambda_0\|\nabla \phi\|^2+2a\lambda_0 \int_\Omega W(\phi) dx\non\\
&& +\|\Delta \phi-W'(\phi)\|^2+\|\nabla \hat \theta\|^2+\|\theta_t\|^2.\label{H}
\eea
Then we can deduce that
\begin{lemma}\label{high3d}
Suppose that $n=2,3$, $\theta_0\in H^2(\Omega)\cap H^1_0(\Omega)$, $|\phi_0|\leq 1$  in $\Omega$, $|\phi_b|\leq 1$ on $\Gamma$ and $\phi_b \in H^\frac52(\Gamma)$. Moreover,
we assume that the non-constant viscosity $\mu(\cdot)$ and thermal diffusivity $\kappa(\cdot)$ are taken as in subsection \ref{uplow} satisfying \eqref{bdee}-\eqref{bdeeb}. Let $(\u, \phi, \theta)$ be a smooth solution to problem \eqref{1}--\eqref{ini}. Then the following differential inequality holds:
 \bea
 &&\frac{d}{dt}\mathcal{H}(t) +\|\u_t\|^2+\gamma\|\nabla(\Delta\phi-W'(\phi))\|^2+\underline{\kappa}\|\Delta \hat\theta\|^2 +\underline{\kappa}\|\nabla \theta_t\|^2 \non\\
 && \quad +\big[\,\underline{\mu}-2\nu(\|\nabla \hat{\theta}(t)\|_{\mathbf{L}^4}+\|\nabla \theta_0\|_{\mathbf{L}^4})\big]\|S\u\|^2 \non\\
&\leq & C_{1}\big(\|\nabla \hat{\theta}(t)\|_{\mathbf{L}^4}+\|\nabla \theta_0\|_{\mathbf{L}^4}+1\big)^8\,\big(\mathcal{H}(t)+1\big)^3,\label{3d1}
  \eea
  where $\nu>0$ is an arbitrary constant and the constant $C_{1}$ depends on $\nu$, $\|\phi_0\|_{L^\infty}$, $\|\phi_b\|_{H^\frac52(\Gamma)}$, $\|\theta_0\|_{H^2}$, $\Omega$,  $\underline{\kappa}$, $\underline{\mu}$, $\gamma$ and coefficients of the system.
\end{lemma}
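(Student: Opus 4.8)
\emph{Strategy.} The plan is to differentiate $\mathcal{H}(t)$ term by term, to replace every time derivative by means of the PDE system \eqref{1}--\eqref{4} and the shifted equation \eqref{hateqthe}, and then to close a single differential inequality by moving all top-order dissipations to the left of \eqref{3d1} and absorbing the remaining multilinear terms into them. Throughout I use $|\phi|\leq 1$ (Lemma \ref{mphi}), which by the explicit form \eqref{W} bounds $W(\phi),W'(\phi),W''(\phi),W'''(\phi)$ in $L^\infty$; the two-sided bounds \eqref{bdee}--\eqref{bdeeb} on the modified coefficients; Agmon's inequality (Lemma \ref{Agmon}) and the Gagliardo--Nirenberg inequality; and the Stokes estimates of Lemma \ref{stoo}. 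A crucial bookkeeping device is the splitting $\nabla\theta=\nabla\hat\theta+\nabla\theta_0$: every temperature gradient is then measured either by the dynamical quantity $\|\nabla\hat\theta\|_{\mathbf{L}^4}$ or by the fixed data norm $\|\nabla\theta_0\|_{\mathbf{L}^4}\leq C\|\theta_0\|_{H^2}$, and neither is absorbed at this stage but kept as the explicit coefficient $(\|\nabla\hat\theta\|_{\mathbf{L}^4}+\|\nabla\theta_0\|_{\mathbf{L}^4}+1)$ on the right of \eqref{3d1}. I also note that the Ginzburg--Landau energy in \eqref{H} is weighted by the \emph{constant} $a\lambda_0$ rather than by $\lambda(\theta)$, precisely so that its time derivative does not generate a $\theta_t$ term; the discrepancy $\lambda(\theta)-a\lambda_0=-\lambda_0 b\theta$ is treated as part of the coupling.

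\emph{Velocity.} For the momentum equation I test with three multipliers. Testing \eqref{1} with $\u$ gives $\tfrac12\frac{d}{dt}\|\u\|^2$ and the viscous dissipation $2\int_\Omega\mu(\theta)|\mathcal{D}\u|^2\,dx$, the convection term vanishing. Testing with $\u_t$ yields $\|\u_t\|^2+\frac{d}{dt}\int_\Omega\mu(\theta)|\mathcal{D}\u|^2\,dx$ at the price of $\int_\Omega\mu'(\theta)\theta_t|\mathcal{D}\u|^2\,dx$, bounded by $\|\theta_t\|\,\|\mathcal{D}\u\|_{\mathbf{L}^4}^2$ and interpolation. To create the Stokes dissipation I rewrite $-\nabla\cdot(2\mu(\theta)\mathcal{D}\u)=-\mu(\theta)\Delta\u-2\mu'(\theta)(\mathcal{D}\u)\nabla\theta$ (using $\nabla\cdot\u=0$) and test with $S\u=-\Delta\u+\nabla\pi$; since $(\u_t,S\u)=\tfrac12\frac{d}{dt}\|\nabla\u\|^2$ and $\int_\Omega\mu(\theta)(-\Delta\u)\cdot S\u=\int_\Omega\mu(\theta)|S\u|^2-\int_\Omega\mu(\theta)\nabla\pi\cdot S\u$, this produces both $\tfrac12\frac{d}{dt}\|\nabla\u\|^2$ and $\int_\Omega\mu(\theta)|S\u|^2\,dx\geq\underline{\mu}\|S\u\|^2$. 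The pressure correction (controlled by \eqref{Stokes I}), the convection $\u\cdot\nabla\u$, the viscosity-gradient term, and the buoyancy $\theta\mathbf{g}$ are all dominated by $\|S\u\|$ times lower-order factors and handled by Young's inequality.

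\emph{Stress tensor, phase field, temperature.} The decisive contributions come from $-\int_\Omega(\nabla\cdot\sigma)\cdot S\u\,dx$ and $-\int_\Omega(\nabla\cdot\sigma)\cdot\u_t\,dx$ with $\sigma$ as in \eqref{sigma}. Since $\lambda(\theta)=\lambda_0(a-b\theta)$ is non-constant the cancellation behind \eqref{bbel} fails, leaving top-order couplings of the schematic form $\int_\Omega\lambda'(\theta)\nabla\theta\,(\nabla\phi\otimes\nabla\phi):\nabla(S\u)$ and $\int_\Omega\lambda(\theta)(\nabla\phi\cdot\nabla^2\phi)\cdot S\u$, which I estimate by $C(\|\nabla\hat\theta\|_{\mathbf{L}^4}+\|\nabla\theta_0\|_{\mathbf{L}^4})\|\nabla\phi\|_{\mathbf{L}^\infty}\|\nabla^2\phi\|\,\|S\u\|$ and the like, using $|\phi|\leq1$, Agmon's inequality, and the $H^2$/$H^3$ regularity of $\phi$ contained in $\|\Delta\phi-W'(\phi)\|$ and $\|\nabla(\Delta\phi-W'(\phi))\|$; Young's inequality then leaves exactly the shaved coefficient $[\underline{\mu}-2\nu(\|\nabla\hat\theta\|_{\mathbf{L}^4}+\|\nabla\theta_0\|_{\mathbf{L}^4})]\|S\u\|^2$. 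For the phase field I differentiate the energy to obtain $\frac{d}{dt}\big[a\lambda_0\big(\tfrac12\|\nabla\phi\|^2+\int_\Omega W(\phi)\big)\big]=-a\lambda_0\gamma\|\Delta\phi-W'(\phi)\|^2+a\lambda_0\int_\Omega(\Delta\phi-W'(\phi))\,\u\cdot\nabla\phi\,dx$ (the boundary term vanishing because $\phi_t|_\Gamma=0$), and I differentiate the identity $\phi_t=\gamma(\Delta\phi-W'(\phi))-\u\cdot\nabla\phi$ to produce $\frac{d}{dt}\|\Delta\phi-W'(\phi)\|^2$ together with the dissipation $\gamma\|\nabla(\Delta\phi-W'(\phi))\|^2$, the boundary contributions being controlled via $\phi_b\in H^{5/2}(\Gamma)$. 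For the temperature I use \eqref{hateqthe}: testing with $-\Delta\hat\theta$ gives $\frac{d}{dt}\|\nabla\hat\theta\|^2$ and $\underline{\kappa}\|\Delta\hat\theta\|^2$ by \eqref{bdee}, while differentiating \eqref{hateqthe} in time and testing with $\theta_t$ gives $\frac{d}{dt}\|\theta_t\|^2$ and $\underline{\kappa}\|\nabla\theta_t\|^2$; the forcing $-\u\cdot\nabla\theta_0+\nabla\cdot(\kappa\nabla\theta_0)$ is controlled by $\|\theta_0\|_{H^2}$ together with $\|\u\|$ and $\|S\u\|$.

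\emph{Closing the estimate and the main obstacle.} Adding the four blocks, every time derivative assembles into $\frac{d}{dt}\mathcal{H}(t)$ and every favorable term into the left-hand side of \eqref{3d1}; it remains to dominate a finite list of multilinear remainders by the right-hand side. The main obstacle is precisely the loss of the cancellation \eqref{bbel}: the couplings carrying $\lambda'(\theta)\nabla\theta$ are genuinely top-order and cannot be integrated away, so each must be split by Young's inequality so that a small multiple of one of the dissipations ($\|S\u\|^2$, $\|\Delta\hat\theta\|^2$, $\|\nabla\theta_t\|^2$, $\gamma\|\nabla(\Delta\phi-W'(\phi))\|^2$, $\|\u_t\|^2$) is absorbed while the leftover is polynomial in $\mathcal{H}$ with a temperature prefactor. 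Keeping $\nu$ arbitrary is essential because, in contrast with the constant-coefficient case, positivity of the coefficient of $\|S\u\|^2$ can only be guaranteed later, once $\|\theta_0\|_{L^\infty}$ (hence $\|\nabla\hat\theta\|_{\mathbf{L}^4}$) is shown to be small. Finally, tracking homogeneity---each top-order factor estimated by $\mathcal{H}^{1/2}$ plus dissipation, each temperature gradient by $(\|\nabla\hat\theta\|_{\mathbf{L}^4}+\|\nabla\theta_0\|_{\mathbf{L}^4})$, and the polynomial nonlinearities from $W$ by powers of $\mathcal{H}+1$---and repeatedly applying Young's inequality to equilibrate the worst (cubic) products yields the stated degree $(\|\nabla\hat\theta\|_{\mathbf{L}^4}+\|\nabla\theta_0\|_{\mathbf{L}^4}+1)^8(\mathcal{H}+1)^3$.
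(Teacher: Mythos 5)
Your proposal is correct and follows essentially the same route as the paper's proof: the same functional $\mathcal{H}$ with the constant weight $a\lambda_0$, the same multipliers ($\u$, $\u_t$ and $S\u$ for the momentum equation; $-a\lambda_0(\Delta\phi-W'(\phi))$ and the evolution of $\|\Delta\phi-W'(\phi)\|^2$ for the phase field; $-\Delta\hat\theta$ and the time-differentiated equation tested with $\theta_t$ for the temperature), the same splitting $\nabla\theta=\nabla\hat\theta+\nabla\theta_0$, and the same $\nu$-shaved coefficient of $\|S\u\|^2$. The only slip is schematic: the stress coupling against $S\u$ is estimated in the paper as $\|\nabla\cdot[\lambda(\theta)\nabla\phi\otimes\nabla\phi]\|\,\|S\u\|$ without integrating by parts (your written form pairing $\sigma$ with $\nabla(S\u)$ would need $\u\in \mathbf{H}^3$), but the bound you actually state is the correct one, so nothing breaks.
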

\begin{proof} For the sake of simplicity, in the remaining part of the proof we only treat the case $n=3$. Similar result can be obtained for $n=2$ with only minor modifications due to the Sobolev embedding theorems. It is important to note that we have assumed $\mu(\cdot)$, $\kappa(\cdot)$ are taken in such a way as in subsection \ref{uplow} (see Remark \ref{uplowremark}). Moreover, in the subsequent proof of this lemma, we shall use Sobolev embedding theorem to control the $L^\infty$-norm of $\theta$ instead of the maximum principle Lemma \ref{mtheta}, since the latter is not valid in the corresponding Galerkin approximate scheme (i.e., Type A in the Appendix).

The proof of Lemma \ref{high3d} consists of several steps.

\textbf{Step 1. Lower-order estimate for $\u$ and $\phi$}.  Multiplying equation \eqref{1} with $\u$ and equation \eqref{3} with
$-a\lambda_0(\Delta\phi-W'(\phi))$, respectively, integrating over $\Omega$ and adding the resultants together, we have
  \bea
&&\frac12\frac{d}{dt}\Big(\|\u\|^2+a\lambda_0\|\nabla\phi\|^2+2a\lambda_0\int_{\Omega}W(\phi)dx
 \Big)\non\\
 &&\quad + 2\int_\Omega \mu(\theta) |\mathcal{D}{\u}|^2 dx +a\lambda_0\gamma\|\Delta\phi-W'(\phi)\|^2 \non\\
&=&\int_{\Omega}\big[\lambda(\theta)\nabla\phi\otimes\nabla\phi\big]:
\nabla \u\,dx+\int_{\Omega}\theta \mathbf{g} \cdot
\u\,dx \non\\
&& \quad + a\lambda_0\int_{\Omega}(\u\cdot\nabla\phi)(\Delta\phi-W'(\phi)) \,dx\non\\
&:=& J_1+J_2+J_3.\label{up1}
 \eea
By Poincar\'e's inequality,  we easily get
 \bea
J_2 &\leq & |\mathbf{g}|\|\theta\|\|\u\|
\leq c_P|\mathbf{g}|\|\theta\|\|\nabla \u\| \non\\
& \leq &\frac{\underline{\mu}}{4}\|\nabla \u\|^2+\frac{c_P^2|\mathbf{g}|^2}{\underline{\mu}}\|\theta\|^2,\label{J2}
 \eea
where the constant $c_P$ only depends  on $n$, $\Omega$. Next, using the identity
$$
\nabla \cdot (\nabla \phi \otimes \nabla \phi)=\Delta \phi \nabla \phi +\nabla\left(\frac{|\nabla \phi|^2}{2}\right),
$$
together with the H\"{o}lder inequality, Sobolev embedding theorem, Poincar\'e's inequality and Young's inequality, after integration by parts, we
deduce that
\bea && J_1+J_3\non\\
&=&
-a\lambda_0\int_{\Omega}\u\cdot\nabla\left(\frac{|\nabla\phi|^2}{2}+W'(\phi)\right)dx
-b\lambda_0\int_{\Omega} \theta (\nabla\phi\otimes \nabla\phi):\nabla \u\,dx\non\\
&\leq& |b|\lambda_0\|\theta\|_{L^\infty}\|\nabla
\u\|\|\nabla\phi\|_{\mathbf{L}^4}^2  \non\\
&\leq& \frac{\underline{\mu}}{4}\|\nabla
\u\|^2+\frac{c|b|^2\lambda_0^2}{\underline{\mu}}\|\nabla \theta\|_{\mathbf{L}^4}^2\|\nabla\phi\|_{\mathbf{L}^4}^4\non\\
&\leq& \frac{\underline{\mu}}{4}\|\nabla \u\|^2+\frac{c|b|^2\lambda_0^2}{\underline{\mu}}\|\nabla \theta\|_{\mathbf{L}^4}^2\|\phi\|_{H^2}^2\|\phi\|_{L^\infty}^2\non\\
&\leq& \frac{\underline{\mu}}{4}\|\nabla \u\|^2+\frac{c|b|^2\lambda_0^2}{\underline{\mu}}\|\nabla \theta\|_{\mathbf{L}^4}^2\|\phi\|_{L^\infty}^2\non\\
&&\quad \times \Big(\|\Delta\phi-W'(\phi)\|^2+\|W'(\phi)\|^2+\|\phi\|^2+\|\phi_b\|_{H^\frac32(\Gamma)}^2\Big).\label{J1J3}
 \eea
In \eqref{J1J3}, we have used the fact $\|\theta\|_{L^\infty}\leq c\|\nabla \theta\|_{\mathbf{L}^4}$ for any $\theta\in H^2(\Omega)\cap H^1_0(\Omega)$ and the following estimate derived from the Gagliardo--Nirenberg inequality ($n=3$)
 \be
 \|\nabla \phi\|_{\mathbf{L}^4}\leq c\|\phi\|_{H^2}^\frac12\|\phi\|_{L^\infty}^\frac12, \quad \forall\, \phi\in H^2,\label{GN}
 \ee
with the elliptic estimate $\|\phi\|_{H^2}\leq c(\|\Delta \phi\|+\|\phi\|+\|\phi_b\|_{H^\frac32(\Gamma)})$.

Hence, we can conclude from the estimates \eqref{J2}, \eqref{J1J3}, Lemma \ref{mphi}, Poincar\'e's inequality,
the fact $2\|\mathcal{D}\u\|^2=\|\nabla \u\|^2$ and the variable transformation \eqref{hattheta} that
\bea
&& \frac12 \frac{d}{dt}\Big(\|\u\|^2+a\lambda_0\|\nabla\phi\|^2+2a\lambda_0\int_{\Omega}W(\phi)dx
 \Big)+\frac{\underline{\mu}}{2}\|\nabla{\u}\|^2 \non\\
 & \leq & C\|\nabla \theta\|_{\mathbf{L}^4}^2(\|\Delta\phi-W'(\phi)\|^2+1)+C\|\theta\|^2\non\\
 &\leq&  C (\|\nabla \hat{\theta}\|_{\mathbf{L}^4}^2+\|\nabla \theta_0\|_{\mathbf{L}^4}^2)\|\Delta\phi-W'(\phi)\|^2+C(\|\hat\theta\|^2+\|\theta_0\|^2)\non\\
 &\leq&  C (\|\nabla \hat{\theta}\|_{\mathbf{L}^4}^2+\|\nabla \theta_0\|_{\mathbf{L}^4}^2)\|\Delta\phi-W'(\phi)\|^2 +C\|\nabla \hat\theta\|^2+C.\label{3d3}
 \eea
 \par
 \textbf{Step 2. $\mathbf{H}^1$-estimate for $\u$.} Since $\u_t \in
\mathbf{H}$, it follows that $-(\Delta \u, \u_t)=(S\u, \u_t)$.
Then multiplying equation \eqref{1} by $S\u=-\Delta \u +\nabla \pi$ and integrating over $\Omega$, we get
 \bea &&\frac12\frac{d}{dt}\|\nabla \u\|^2+\int_\Omega \mu(\theta) |S{\u}|^2 dx \non\\
&=&-\int_\Omega (\u\cdot\nabla \u)\cdot S{\u} dx
   +2\int_\Omega \mu'(\theta)(\nabla \theta \cdot \mathcal{D}\u)\cdot S\u dx
   +\int_\Omega \mu(\theta)\nabla \pi\cdot S\u dx\non\\
&&\quad -\int_\Omega \{\nabla \cdot[\lambda(\theta) (\nabla \phi\otimes \nabla \phi)]\} \cdot  S\u dx
  + \int_\Omega \theta \mathbf{g}\cdot S\u dx\non\\
&=&\sum_{m=1}^5 I_m. \label{ttt}
 \eea

In what follows, we denote by $\epsilon>0$ a constant that can be chosen arbitrary small if necessary.
Using the H\"older inequality, the Sobolev embedding theorem, \eqref{Stokes II}, \eqref{hattheta} and Young's inequality, the terms $I_1, I_2, I_5$ can be estimated as follows:
\bea
I_1&\leq& c\|\u\|_{\mathbf{L}^6}\|\nabla \u\|_{\mathbf{L}^3}\|S\u\|\leq c\|\nabla \u\|^\frac32\|S \u\|^\frac32\non\\
&\leq& \epsilon \|S \u\|^2+ C\|\nabla \u\|^6,\non
\eea
\bea
I_2&\leq& \|\mu'(\theta)\|_{L^\infty}\|\nabla \theta\|_{\mathbf{L}^4}\|\nabla \u\|_{\mathbf{L}^4}\|S \u\|\non\\
&\leq& C\|\nabla \theta\|_{\mathbf{L}^4}\|\nabla \u\|^\frac14\|\Delta \u\|^\frac34\|S\u\|\non\\
&\leq& C(\|\nabla \hat{\theta}\|_{\mathbf{L}^4}+\|\nabla \theta_0\|_{\mathbf{L}^4})\|\nabla \u\|^\frac14\|S\u\|^\frac74\non\\
&\leq& \epsilon \|S\u\|^2+ C(\|\nabla \hat{\theta}\|_{\mathbf{L}^4}^8+\|\nabla \theta_0\|_{\mathbf{L}^4}^8)\|\nabla \u\|^2,\non
\eea
\bea
I_5&\leq& \epsilon\|S\u\|^2+C\|\theta\|^2\leq \epsilon\|S\u\|^2+C\|\hat{\theta}\|^2+C\|\theta_0\|^2\non\\
&\leq& \epsilon\|S\u\|^2+C\|\nabla \hat{\theta}\|^2+C.\non
\eea

Concerning the term $I_3$, using integration by parts and the estimates \eqref{Stokes I}, \eqref{Stokes II} for the pressure,  we have
\bea
I_3&=& -\int_\Omega \pi \mu'(\theta)\nabla\theta\cdot  S\u dx\non\\
&\leq& \|\mu'(\theta)\|_{L^\infty}\|\pi\|_{L^4}\|\nabla \theta\|_{\mathbf{L}^4}\|S\u\|\non\\
&\leq& C\|\pi\|_{H^1}^\frac34\|\pi\|^\frac14 \|\nabla \theta\|_{\mathbf{L}^4} \|S\u\|\non\\
&\leq& C\|S \u\|^\frac34(\nu\|S\u\|+C_\nu\|\nabla \u\|)^\frac14(\|\nabla \hat{\theta}\|_{\mathbf{L}^4}+ \|\nabla \theta_0\|_{\mathbf{L}^4})\|S\u\|\non\\
&\leq& \epsilon\|S\u\|^2+ \nu (\|\nabla \hat{\theta}\|_{\mathbf{L}^4}+ \|\nabla \theta_0\|_{\mathbf{L}^4})\|S\u\|^2+ C(\|\nabla \hat{\theta}\|_{\mathbf{L}^4}^8+ \|\nabla \theta_0\|_{\mathbf{L}^4}^8)\|\nabla \u\|^2,\non
\eea
where the constant $\nu>0$ is arbitrary and in particular, it is independent of $\epsilon$.

Next, for $I_4$, we have
\bea
 I_4 &\leq&  \|\nabla \cdot[\lambda(\theta) \nabla \phi\otimes\nabla \phi]\|\|S\u\|\non\\
    & \leq &  \epsilon\|S\u\|^2+ C\|\nabla \cdot[\lambda(\theta) \nabla \phi\otimes\nabla \phi]\|^2\non\\
 &\leq&  \epsilon\|S\u\|^2 + C\|\lambda'(\theta)\|_{L^\infty}^2\|\nabla \theta\|_{\mathbf{L}^4}^2\|\nabla \phi\|_{\mathbf{L}^8}^4\non\\
 && \quad + C \|\lambda(\theta)\|_{L^\infty}^2(\|\Delta\phi\|^2+\|\nabla^2 \phi\|^2)\|\nabla\phi\|_{\mathbf{L}^\infty}^2\non\\
 &:=&  \epsilon\|S\u\|^2+I_{4a}+I_{4b}.\non
 \eea
It follows from the Gagliardo--Nirenberg inequality, Lemma \ref{mphi} and Young's inequality that
 \bea
 I_{4a} &\leq& C\|\nabla \theta\|_{\mathbf{L}^4}^2\|\nabla \phi\|_{\mathbf{L}^6}^\frac72\|\nabla \phi\|_{\mathbf{H}^2}^\frac12\non\\
 &\leq& C(\|\nabla \hat{\theta}\|_{\mathbf{L}^4}^2+\|\nabla \theta_0\|_{\mathbf{L}^4}^2)
        \big(\|\Delta \phi\|+\|\phi\|+\|\phi_b\|_{H^\frac32(\Gamma)}\big)^\frac72\non\\
 &&\quad \times \big(\|\Delta\phi\|_{H^1}+\|\phi\|+\|\phi_b\|_{H^\frac52(\Gamma)}\big)^\frac12  \non\\
  &\leq& C(\|\nabla \hat{\theta}\|_{\mathbf{L}^4}^2+\|\nabla \theta_0\|_{\mathbf{L}^4}^2) (\|\Delta\phi-W'(\phi)\|+C)^\frac72\non\\
  &&\quad \times (\|\nabla(\Delta\phi-W'(\phi))\|+\|\Delta\phi-W'(\phi)\|+\|W''(\phi)\nabla \phi\|+\|W'(\phi)\|+C)^\frac12 \non\\
  &\leq& \epsilon\|\nabla(\Delta\phi-W'(\phi))\|^2+ C(\|\nabla \hat{\theta}\|_{\mathbf{L}^4}^2+\|\nabla \theta_0\|_{\mathbf{L}^4}^2)^\frac43 (\|\Delta\phi-W'(\phi)\|^{\frac{14}{3}}+C)\non\\
  &&\quad + C(\|\nabla \hat{\theta}\|_{\mathbf{L}^4}^2+\|\nabla \theta_0\|_{\mathbf{L}^4}^2) (\|\Delta\phi-W'(\phi)\|^4+\|\nabla \phi\|^4+C)\non\\
  &\leq& \epsilon\|\nabla(\Delta\phi-W'(\phi))\|^2+ C(\|\nabla \hat{\theta}\|_{\mathbf{L}^4}^4+\|\nabla \theta_0\|_{\mathbf{L}^4}^4)(\|\Delta\phi-W'(\phi)\|^{6}+\|\nabla \phi\|^6)\non\\
  &&\quad + C(\|\Delta\phi-W'(\phi)\|^{2}+\|\nabla \phi\|^2) +C(\|\nabla \hat{\theta}\|_{\mathbf{L}^4}^4+\|\nabla \theta_0\|_{\mathbf{L}^4}^4+1),\non
 \eea
 and
 \bea
 I_{4b}
 &\leq& C(1+\|\theta\|_{L^\infty}^2)(\|\Delta\phi\|^2+\|\nabla^2 \phi\|^2)\|\nabla \phi\|_{\mathbf{H}^2}\|\nabla \phi\|_{\mathbf{H}^1}
 \non\\
 &\leq& C(1+\|\hat{\theta}\|_{L^\infty}^2+\|\theta_0\|_{L^\infty}^2)\| \phi\|_{H^2}^3\|\phi\|_{H^3}
 \non\\
&\leq& C(1+\|\nabla \hat{\theta}\|_{\mathbf{L}^4}^2+\|\nabla \theta_0\|_{\mathbf{L}^4}^2)(\|\Delta\phi-W'(\phi)\|^3+C)\Big(\|\nabla(\Delta\phi-W'(\phi))\|\non\\
&&\quad +\|\Delta\phi-W'(\phi)\|+\|W''(\phi)\nabla \phi\|+\|W'(\phi)\|+\|\phi_b\|_{H^\frac52(\Gamma)}+\|\phi\|\Big)\non\\
&\leq&  \epsilon\|\nabla(\Delta\phi-W'(\phi))\|^2+C(1+\|\nabla \hat{\theta}\|_{\mathbf{L}^4}^4+\|\nabla \theta_0\|_{\mathbf{L}^4}^4)(\|\Delta\phi-W'(\phi)\|^{6}+1)\non\\
&&\quad  +C(1+\|\nabla \hat{\theta}\|_{\mathbf{L}^4}^2+\|\nabla \theta_0\|_{\mathbf{L}^4}^2)(\|\Delta\phi-W'(\phi)\|^{4}+\|\nabla \phi\|^4+1)\non\\
&\leq& \epsilon\|\nabla(\Delta\phi-W'(\phi))\|^2+ C(\|\nabla \hat{\theta}\|_{\mathbf{L}^4}^4+\|\nabla \theta_0\|_{\mathbf{L}^4}^4)(\|\Delta\phi-W'(\phi)\|^{6}+\|\nabla \phi\|^6)\non\\
  &&\quad + C(\|\Delta\phi-W'(\phi)\|^{6}+\|\nabla \phi\|^6)+C(\|\nabla \hat{\theta}\|_{\mathbf{L}^4}^4+\|\nabla \theta_0\|_{\mathbf{L}^4}^4+1).\non
 \eea
 As a result, we deduce from \eqref{ttt}, the above estimates and a further application of Young's inequality that
 \bea &&\frac12\frac{d}{dt}\|\nabla \u\|^2+ \Big[\underline{\mu}-5\epsilon-\nu (\|\nabla \hat{\theta}\|_{\mathbf{L}^4}+ \|\nabla \theta_0\|_{\mathbf{L}^4})\Big]\|S{\u}\|^2 \non\\
&\leq& 2\epsilon\|\nabla(\Delta\phi-W'(\phi))\|^2 \non\\
&& \quad  +C(\|\nabla \hat{\theta}\|_{\mathbf{L}^4}^8+\|\nabla \theta_0\|_{\mathbf{L}^4}^8)(\|\nabla \u\|^2+\|\Delta\phi-W'(\phi)\|^{6}+\|\nabla \phi\|^6)\non\\
&& \quad  +C(\|\nabla \u\|^6+\|\Delta\phi-W'(\phi)\|^{6}+\|\nabla \phi\|^6+\|\nabla \hat{\theta}\|^2)\non\\
&& \quad  +C(\|\nabla \hat{\theta}\|_{\mathbf{L}^4}^4+\|\nabla \theta_0\|_{\mathbf{L}^4}^4+1).\label{3d5}
 \eea

 On the other hand, multiplying the equation \eqref{1} by $\u_t$ and integrating over $\Omega$, we obtain that
 \bea &&\frac{d}{dt}\int_\Omega \mu(\theta)|\mathcal{D}\u|^2dx+\|\u_t\|^2 \non\\
&=&-\int_\Omega(\u\cdot \nabla \u) \cdot \u_t dx +\int_\Omega \mu'(\theta)\theta_t |\mathcal{D} \u|^2 dx\non\\
&&\quad -\int_\Omega \nabla \cdot[\lambda(\theta) (\nabla \phi\otimes \nabla \phi)]\cdot  \u_t dx  + \int_\Omega \theta \mathbf{g}\cdot \u_t dx\non\\
&=&\sum_{m=1}^4 K_m. \label{tA-part1}
 \eea
The terms $K_1, K_2 , K_4$ on the right-hand side of \eqref{tA-part1} can be estimated as follows:
 \bea K_1 &\leq& \|\u_t\|\|\u\|_{\mathbf{L}^6}\|\nabla \u\|_{\mathbf{L}^3}\non\\
 &\leq& c\|\u_t\|\|\nabla \u\|^\frac32\|\nabla \u\|_{\textbf{H}^1}^\frac12
 \non\\
 &\leq&
\epsilon\|\u_t\|^2+ \epsilon\|S \u\|^2+ C\|\nabla \u\|^6, \non
 \eea
 \bea
 K_2&\leq& c\|\mu'(\theta)\|_{L^\infty}\|\theta_t\|\|\mathcal{D} \u\|_{\mathbf{L}^4}^2 \non\\
 &\leq& C\|\theta_t\|\|\nabla \u\|^\frac12\|\nabla \u\|_{\mathbf{H}^1}^\frac32\non\\
 &\leq& \epsilon \|S \u\|^2+ C (\|\nabla \u\|^6+\|\theta_t\|^6),\non
 \eea
 \bea
 K_4 &\leq& C\|\theta\|\|\u_t\|\non\\
 &\leq&  \epsilon\|\u_t\|^2+C(\|\hat{\theta}\|^2+\|\theta_0\|^2)\non\\
 &\leq& \epsilon\|\u_t\|^2+C(\|\nabla \hat{\theta}\|^2+1).\non
 \eea
 For the third term $K_3$, we have
 \bea
 K_3 &\leq&  \|\nabla \cdot(\lambda(\theta) \nabla \phi\otimes\nabla \phi)\|\|\u_t\|\non\\
    & \leq &
     \epsilon\|\u_t\|^2+ C\|\nabla \cdot(\lambda(\theta) \nabla \phi\otimes\nabla \phi)\|^2,\label{Kes3}
 \eea
 where the last term on the right-hand side of \eqref{Kes3} can be estimated exactly as $I_{4a}$ and $I_{4b}$ above.
 Then we infer from \eqref{tA-part1}, the above estimates and Young's inequality that
\bea &&\frac{d}{dt}\int_\Omega \mu(\theta)|\mathcal{D}\u|^2dx+(1-3\epsilon)\|\u_t\|^2 \non\\
&\leq&  2\epsilon\|S\u\|^2+2\epsilon\|\nabla(\Delta\phi-W'(\phi))\|^2\non\\
&& \quad +C(\|\nabla \hat{\theta}\|_{\mathbf{L}^4}^8+\|\nabla \theta_0\|_{\mathbf{L}^4}^8)(\|\Delta\phi-W'(\phi)\|^{6}+\|\nabla \phi\|^6)\non\\
&& \quad +C(\|\nabla \u\|^6+\|\theta_t\|^6+\|\Delta\phi-W'(\phi)\|^{6}+\|\nabla \phi\|^6+\|\nabla \hat{\theta}\|^2)\non\\
&& \quad +C(\|\nabla \hat{\theta}\|_{\mathbf{L}^4}^4+\|\nabla \theta_0\|_{\mathbf{L}^4}^4+1). \label{3d5a}
 \eea

\par \textbf{Step 3. $H^2$-estimate for $\phi$.}
Due to equation \eqref{3} and the boundary conditions \eqref{bc1}, it holds
$\Delta\phi-W'(\phi)\big|_{\Gamma}=0$ (similar situation can be found in \cite{LL95} for the liquid crystal system). Then we can compute that
\bea &&\frac12\frac{d}{dt} \|\Delta\phi-W'(\phi)\|^2+\gamma\|\nabla(\Delta\phi-W'(\phi))\|^2  \non\\
&=& -\gamma\int_\Omega W''(\phi)|\Delta\phi-W'(\phi)|^2 dx - 2\int_{\Omega}(\Delta\phi-W'(\phi))
\nabla \u :\nabla^2\phi \,dx \non\\
&&\quad -\int_\Omega (\Delta \u\cdot \nabla \phi) (\Delta \phi-W'(\phi)) dx\non\\
&=&\sum_{m=5}^7 K_m. \label{tA-part2}
 \eea
The first term $K_5$ can be simply estimated by using Lemma \ref{mphi}
 \be
 K_5\leq  \gamma \|W''(\phi)\|_{L^\infty}\|\Delta\phi-W'(\phi)\|^2
\leq C\|\Delta\phi-W'(\phi)\|^2. \non
 \ee
Then for terms $K_6$ and $K_7$, it follows from the H\"{o}lder inequality, Gagliardo--Nirenberg inequality, Poincar\'e's inequality and Young's inequality that
\bea
K_6
&\leq&C\|\Delta\phi-W'(\phi)\|_{L^6}\|\nabla
\u\|_{\mathbf{L}^3}\|\phi\|_{H^2}  \non\\
&\leq&C\|\nabla(\Delta\phi-W'(\phi))\|\|\nabla
\u\|^\frac12\|\nabla\u\|_{\mathbf{H}^1}^\frac12(\|\Delta\phi\|+\|\phi\|+\|\phi_b\|_{H^\frac32(\Gamma)})  \non\\
&\leq&\epsilon\|\nabla(\Delta\phi-W'(\phi))\|^2+C\|\nabla
\u\|\|\Delta \u\|\big(\|\Delta\phi-W'(\phi)\|^2+\|W'(\phi)\|^2+1\big)\non\\
 &\leq&   \epsilon\|\nabla(\Delta\phi-W'(\phi))\|^2+\epsilon \|S \u\|^2+C(\|\nabla
\u\|^2+\|\nabla \u\|^6+\|\Delta\phi-W'(\phi)\|^6), \non
 \eea
 \bea
 K_7&\leq& \|\Delta \u\| \| \nabla \phi\|_{\mathbf{L}^6} \| \Delta \phi-W'(\phi)\|_{L^3}\non\\
 &\leq& \epsilon \|S \u\|^2+C\| \phi\|_{H^2}^2\|\nabla (\Delta \phi-W'(\phi))\|\|\Delta \phi-W'(\phi)\|\non\\
 &\leq& \epsilon \|S \u\|^2+  \epsilon\|\nabla(\Delta\phi-W'(\phi))\|^2 \non\\
 &&\quad +C(\|\Delta\phi-W'(\phi)\|^6+\|\Delta\phi-W'(\phi)\|^2).\non
 \eea
 As a result, we infer from \eqref {tA-part2} and the above estimates that
 \bea &&\frac12\frac{d}{dt} \|\Delta\phi-W'(\phi)\|^2+(\gamma-2\epsilon)\|\nabla(\Delta\phi-W'(\phi))\|^2  \non\\
&\leq& 2\epsilon \|S \u\|^2 +C(\|\nabla \u\|^6+\|\Delta\phi-W'(\phi)\|^6)+C. \label{3d4}
 \eea

 \par\textbf{Step 4. $H^1$-estimate for $\theta$.} We shall estimate $H^1$-norm of the shifted temperature $\hat{\theta}$ instead of the original one $\theta$.
 Multiplying the equation in \eqref{hateqthe} for $\hat{\theta}$ by $-\Delta\hat{\theta}$, integrating over $\Omega$, we have
  \bea
&&\frac12\frac{d}{dt}\|\nabla\hat{\theta}\|^2 +\int_\Omega \kappa(\hat{\theta}+\theta_0)|\Delta\hat{\theta}|^2dx \non\\
&=&\int_{\Omega}(\u\cdot\nabla) (\hat{\theta}+\theta_0)\Delta\hat{\theta} \,dx -\int_\Omega \kappa(\hat{\theta}+\theta_0) \Delta \theta_0\Delta \hat{\theta} dx\non\\
 &&\quad -\int_\Omega \kappa'(\hat{\theta}+\theta_0) |\nabla (\hat{\theta}+\theta_0)|^2\Delta \hat{\theta} dx\non\\
 &:=&\sum_{m=8}^{10}K_m.\label{htheh1}
 \eea
Using the H\"{o}lder inequality, the Sobolev embedding theorem and Young's inequality, we obtain that
\bea
K_8&\leq& \|\u\|_{\mathbf{L}^4}\|\nabla (\hat{\theta}+\theta_0)\|_{\mathbf{L}^4}\|\Delta\hat{\theta}\|\non\\
   &\leq& C\|\nabla \u\|(\|\nabla \hat{\theta}\|_{\mathbf{L}^4}+\|\nabla \theta_0\|_{\mathbf{L}^4})\|\Delta\hat{\theta}\|\non\\
   &\leq& \epsilon \|\Delta\hat{\theta}\|^2+C\|\nabla \u\|^4+C(\|\nabla \hat{\theta}\|_{\mathbf{L}^4}^4+\|\nabla \theta_0\|_{\mathbf{L}^4}^4),\non
\eea
\bea
K_9 &\leq& \|\kappa(\hat{\theta}+\theta_0)\|_{L^\infty}\| \Delta \theta_0\|\|\Delta \hat{\theta}\|\non\\
&\leq& \epsilon\|\Delta \hat{\theta}\|^2+C,\non
\eea
\bea
K_{10} &\leq& \|\kappa'(\hat{\theta}+\theta_0)\|_{L^\infty}\|\nabla (\hat{\theta}+\theta_0)\|_{\mathbf{L}^4}^2\|\Delta \hat{\theta}\|\non\\
       &\leq& \epsilon \|\Delta \hat{\theta}\|^2+ C(\|\nabla \hat{\theta}\|^4_{\mathbf{L}^4}+\|\nabla \theta_0\|^4_{\mathbf{L}^4}).\non
\eea
Thus, we infer from \eqref{htheh1} and the above estimates that
\be
\frac12\frac{d}{dt}\|\nabla\hat{\theta}\|^2 +(\underline{\kappa}- 3\epsilon)\|\Delta\hat{\theta}\|^2
\leq C\|\nabla \u\|^4+ C(\|\nabla \hat{\theta}\|^4_{\mathbf{L}^4}+\|\nabla \theta_0\|^4_{\mathbf{L}^4}+1). \label{htheh1a}
 \ee

\par \textbf{Step 5. $L^2$-estimate for $\theta_t$.} Differentiating equation \eqref{4} for $\theta$ with respect to time, multiplying the resultant by $\theta_t$ and integrating over $\Omega$, we obtain
 \bea
&& \frac12\frac{d}{dt}\|\theta_t\|^2 +\int_\Omega \kappa(\theta) |\nabla \theta_t|^2dx
\non\\
& = &
 -\int_\Omega \kappa'(\theta)\theta_t \nabla \theta\cdot \nabla \theta_t dx -\int_\Omega (\u_t\cdot \nabla \theta)\theta_t dx-\int_\Omega (\u\cdot \nabla \theta_t)\theta_t dx\non\\
&:=&\sum_{m=11}^{13}K_m. \label{temperature-1}
 \eea
 Thanks to the incompressibility of $\u$, it is obvious that $K_{13}=0$. Next, we estimate the terms $K_{12}, K_{13}$ by using the Sobolev embedding theorem:
 \bea
K_{11}  &\leq& \|\kappa'(\theta)\|_{L^\infty}\|\theta_t\|_{L^4}\| \nabla \theta\|_{\mathbf{L}^4}\|\nabla \theta_t\|\non\\
&\leq& C\| \nabla \theta\|_{\mathbf{L}^4}\|\theta_t\|^\frac14\|\nabla \theta_t\|^\frac74\non\\
&\leq& \epsilon \|\nabla \theta_t\|^2+ C (\|\nabla \hat{\theta}\|_{\mathbf{L}^4}^8+\|\nabla \theta_0\|_{\mathbf{L}^4}^8)\|\theta_t\|^2,\non
 \eea
 \bea
 K_{12}&\leq& \|\u_t\|\|\nabla \theta\|_{\mathbf{L}^4}\|\theta_t\|_{L^4}\non\\
 &\leq& \epsilon \|\u_t\|^2+ C (\|\nabla \hat{\theta}\|_{\mathbf{L}^4}^2+\|\nabla \theta_0\|_{\mathbf{L}^4}^2)\|\nabla \theta_t\|^\frac32\|\theta_t\|^\frac12\non\\
 &\leq& \epsilon \|\u_t\|^2+ \epsilon \|\nabla \theta_t\|^2+ C (\|\nabla \hat{\theta}\|_{\mathbf{L}^4}^8+\|\nabla \theta_0\|_{\mathbf{L}^4}^8)\|\theta_t\|^2.\non
 \eea
The above estimates together with Young's inequality yield that
 \be\frac12\frac{d}{dt}\|\theta_t\|^2 +(\underline{\kappa}-2\epsilon) \|\nabla \theta_t\|^2
 \leq  \epsilon \|\u_t\|^2+  C (\|\nabla \hat{\theta}\|_{\mathbf{L}^4}^8+\|\nabla \theta_0\|_{\mathbf{L}^4}^8)\|\theta_t\|^2.\label{3d6}
 \ee

\textbf{Step 6.} Combing the differential inequalities \eqref{3d3}, \eqref{3d5}, \eqref{3d5a}, \eqref{3d4}, \eqref{htheh1a} and \eqref{3d6}, using Young's inequality and taking the coefficient $\epsilon>0$ sufficiently small in all these inequalities such that
\be
0<\epsilon<\frac{1}{18}\min\{\underline{\mu},\ \underline{\kappa},\ \gamma,\ 1\},\label{smaep}
\ee
then we can conclude
\bea
&& \frac{d}{dt}\mathcal{H}(t)+\underline{\mu}\|\nabla \u\|^2+\|\u_t\|^2+\gamma\|\nabla(\Delta\phi-W'(\phi))\|^2+\underline{\kappa}\|\Delta \hat\theta\|^2 +\underline{\kappa}\|\nabla \theta_t\|^2 \non\\
&& \quad +\Big[\underline{\mu}-2\nu(\|\nabla \hat{\theta}\|_{\mathbf{L}^4}+\|\nabla \theta_0\|_{\mathbf{L}^4})\Big]\|S\u\|^2 \non\\
&\leq& C(\|\nabla \hat{\theta}\|_{\mathbf{L}^4}^8+\|\nabla \theta_0\|_{\mathbf{L}^4}^8)(\|\nabla \u\|^2+\|\Delta \phi-W'(\phi)\|^6+\|\nabla \phi\|^6+\|\theta_t\|^2)\non\\
&&\quad +C(\|\nabla \u\|^6+\|\Delta \phi-W'(\phi)\|^6+\|\nabla \phi\|^6+\|\theta_t\|^6+\|\nabla \hat\theta\|^2)\non\\
&&\quad +C(\|\nabla \hat{\theta}\|^4_{\mathbf{L}^4}+\|\nabla \theta_0\|^4_{\mathbf{L}^4}+1)\non\\
&\leq& C(\|\nabla \hat{\theta}\|_{\mathbf{L}^4}^8+\|\nabla \theta_0\|_{\mathbf{L}^4}^8+1)(\|\nabla \u\|^6+\|\Delta \phi-W'(\phi)\|^6+\|\nabla \phi\|^6+\|\theta_t\|^6+\|\nabla \hat\theta\|^6) \non\\
&&\quad  +C(\|\nabla \hat{\theta}\|^8_{\mathbf{L}^4}+\|\nabla \theta_0\|^8_{\mathbf{L}^4}+1), \non
\eea
where $\mathcal{H}(t)$ is given in \eqref{H}. By the definition of $\mathcal{H}(t)$, we easily arrive at our conclusion \eqref{3d1}. The proof is complete.
\end{proof}

\subsection{Proof of Theorem \ref{str3D}}

 \subsubsection{Existence}
  Existence of local strong solutions can be proved by means of a suitable semi-Galerkin approximate scheme (for instance, Type A in the Appendix section), which preserves the maximum principle for the phase function $\phi$ (see, e.g., \cite{C09,LL95} for a similar argument for the liquid crystal system), but not for the temperature $\theta$.

 \textbf{Step 1. The auxiliary problem}. First, we consider the auxiliary initial boundary value problem (IBVP for short) of problem \eqref{1}--\eqref{ini}, in which the viscosity $\mu$ and the thermal diffusivity $\kappa$ have been modified as in subsection \ref{uplow} (see Remark \ref{uplowremark}).

 For each $m\in \mathbb{N}$, we can construct an approximate solution $(\u^m, \phi^m, \theta^m)$ on certain time interval $[0,T_m]$ (see Proposition \ref{ppn3} in Appendix).  On the other hand, the (formal) differential inequality \eqref{3d1} obtained in Lemma \ref{high3d} can be justified by this approximate scheme. Therefore, we are able to apply \eqref{3d1} to derive uniform \emph{a priori} estimates for the approximate solutions (and we drop the superscript $m$ below for the sake of simplicity).

 Since $(\u_0, \phi_0, \theta_0)\in \mathbf{V}\times H^2(\Omega)\times (H^2(\Omega)\cap H^1_0(\Omega))$, it is straightforward to verify that
 $$
 \mathcal{H}(0)\leq C_0(\|\u_0\|_{\mathbf{V}}, \|\phi_0\|_{H^2}, \|\theta_0\|_{H^2}):=C_0<+\infty.
 $$
 It follows from the definition of $\hat{\theta}$ that $\hat{\theta}(0)=0$ and as a result, we have $\|\nabla \hat{\theta}(0)\|_{\mathbf{L}^4}=0$. Then in the differential inequality \eqref{3d1}, we choose the positive constant $\nu$ as follows
\be
\nu= \frac{\underline{\mu}}{4(\|\nabla \theta_0\|_{\mathbf{L}^4}+1)},\label{smallnu}
\ee
where $\underline{\mu}$ is taken as in subsection \ref{uplow} (recall also Remark \ref{uplowremark}).
As a consequence,
 the constant $C_1$ on the right-hand side of \eqref{3d1} is now fixed after we set the value of $\nu$.

In \eqref{htheh1a}, taking $\epsilon>0$ sufficiently small (recall \eqref{smaep}), we get
\be
\|\nabla\hat{\theta}(t)\|^2\leq C\int_0^t(\|\nabla \u(\tau)\|^4+ \|\nabla \hat{\theta}(\tau)\|^4_{\mathbf{L}^4}) d\tau +Ct,\label{hatheh1}
\ee
where $C$ is independent of $t$.  On the other hand, it follows from \eqref{hateqthe} that
\bea
\|\Delta\hat{\theta}(t)\| &\leq& C\|\theta_t(t)\|+ C\|\nabla \u(t)\|^2+ C\|\nabla \hat{\theta}(t)\|^2_{\mathbf{L}^4}+C, \label{hatheh1a}
 \eea
 where $C$ may depend on $\underline{\kappa}$ (as taken in subsection \ref{uplow}), $\|\theta_0\|_{H^2}$ and $\Omega$.
 Then by the Gagliardo--Nirenberg inequality we infer that (using the fact $\hat{\theta}|_\Gamma=0$)
\bea
&& \|\nabla \hat{\theta}(t)\|_{\mathbf{L}^4}\non\\
&\leq&
 c\|\nabla \hat{\theta}(t)\|^\frac14\|\Delta \hat{\theta}(t)\|^\frac34 \non\\
 &\leq& C_2 \left[\int_0^t\big(\mathcal{H}(\tau)^2+ \|\nabla \hat{\theta}(\tau)\|^4_{\mathbf{L}^4}\big) d\tau +t\right]^\frac18\left(\mathcal{H}(t)^2+\|\nabla \hat{\theta}(t)\|^4_{\mathbf{L}^4}+1\right)^\frac38.\label{TheL4}
\eea

Now our aim is to prove that there exists a time $T_*>0$ such that
\be
\|\nabla \hat{\theta}(t)\|_{\mathbf{L}^4}<1,\quad \forall \,t\in [0,T^*].\label{smallL4}
\ee
Since $\|\nabla \hat{\theta}(0)\|_{\mathbf{L}^4}=0$, then by continuity, the inequality \eqref{smallL4} at least holds for sufficiently small time $t$.
First, it is easy to find a sufficiently small time $T^*\in (0,1]$ that the following inequalities are satisfied
\bea
&&4C_1\big(\|\nabla \theta_0\|_{\mathbf{L}^4}+2\big)^8(C_0+1)^2\,T^* \leq \frac12\ln\left(\frac32\right),\label{CC1}\\
&& C_2\left(4C_0^2+4C_0+3\right)^\frac12\left(T^*\right)^\frac18 \leq \frac12.\label{CC2}
\eea
Below we shall show that this time $T^*$ is exactly what we are looking for. The proof is done by a contradiction argument in the sprit of \cite{LB99}. Suppose that there is a $T_0\in (0, T^*)$ satisfying
\be
\|\nabla \hat{\theta}(t)\|_{\mathbf{L}^4}<1\quad \text{for}\ t\in [0, T_0), \quad \text{but}\ \|\nabla \hat{\theta}(T_0)\|_{\mathbf{L}^4}=1.\label{ctheL4}
\ee
It follows from \eqref{3d1}, \eqref{smallnu} and assumption \eqref{ctheL4} that for $t\in [0,T_0]$, it holds
\bea
 &&\frac{d}{dt}\left(\mathcal{H}(t)+1\right) +\frac{\underline{\mu}}{2}\|S\u\|^2+\|\u_t\|^2+\gamma\|\nabla(\Delta\phi-W'(\phi))\|^2\non\\
 &&\quad +\underline{\kappa}\|\Delta \hat\theta\|^2 +\underline{\kappa}\|\nabla \theta_t\|^2 \non\\
&\leq & C_{1}\left(\|\nabla \theta_0\|_{\mathbf{L}^4}+2\right)^8\left(\mathcal{H}(t)+1\right)^3.\label{3d1aa}
  \eea
Recall the following inequality (see \cite[Lemma 3.2]{LB99}):
\bl\label{Gron}
Let $\varphi(t)$ and $\psi(t)$ be nonnegative functions satisfying
\be
\begin{cases}
\varphi'(t)+\psi(t)\leq C_*\varphi(t)^3+h(t)\varphi(t)+f(t), \quad t\in (0,T],\\
\varphi(t)=\varphi_0\geq 0,
\end{cases}\non
\ee
where $h(\cdot)$ and $f(\cdot)$ are nonnegative continuous functions. Let $T_1\in (0,T]$ and
$$ C_*\int_0^{T_1} \left(2\varphi_0+2\int_0^s f(\tau)d\tau \right)^2ds+\int_0^{T_1} h(s) ds\leq \frac12\ln\left(\frac32\right).$$
Then, for all $t\in [0,T_1]$, the following inequality holds
$$
\varphi(t)+\int_0^t \psi(s)ds \leq 2\left(\varphi_0+\int_0^t f(s) ds\right).
$$
\el
\noindent Let us now take
\bea
\varphi(t)&=&\mathcal{H}(t)+1 \quad \text{with}\quad \varphi(0)=\mathcal{H}(0)+1=C_0+1,\non\\
\psi(t)&=&\frac{\underline{\mu}}{2}\|S\u\|^2+\|\u_t\|^2+\gamma\|\nabla(\Delta\phi-W'(\phi))\|^2+\underline{\kappa}\|\Delta \hat\theta\|^2 +\underline{\kappa}\|\nabla \theta_t\|^2,\non\\
C_* &=& C_1(\|\nabla \theta_0\|_{\mathbf{L}^4}+2)^8,\non\\
 f(t)&=& 0,\non\\
 h(t)&=& 0.\non
\eea
Then by \eqref{CC1}, we can verify that all the assumptions in Lemma \ref{Gron} are fulfilled on $[0,T_0]\subset [0,T^*]$ and as a consequence, it holds
\be
\mathcal{H}(t)\leq 2C_0+1,\quad \forall\, t\in [0,T_0].\label{HT0}
\ee
Hence, we deduce from \eqref{TheL4}, \eqref{CC2}, \eqref{ctheL4} and \eqref{HT0} that
\bea
&& \|\nabla \hat{\theta}(T_0)\|_{\mathbf{L}^4}\non\\
&\leq& C_2 \left(\int_0^{T_0}\big[\mathcal{H}(\tau)^2+ \|\nabla \hat{\theta}(\tau)\|^4_{\mathbf{L}^4}\big] d\tau +T_0\right)^\frac18
       \left(\mathcal{H}(T_0)^2+\|\nabla \hat{\theta}(T_0)\|^4_{\mathbf{L}^4}+1\right)^\frac38\non\\
&\leq&  C_2 \left[(4C_0^2+4C_0+3)T_0\right]^\frac18\left(4C_0^2+4C_0+3\right)^\frac38\non\\
&<& C_2 \left(4C_0^2+4C_0+3\right)^\frac12\left(T^*\right)^\frac18\non\\
&\leq& \frac12<1,\non
\eea
which leads to a contradiction with the definition of $T_0$ given in \eqref{ctheL4}.

As a result, for the time $T^*$ we have chosen above, \eqref{smallL4} is satisfied and the differential inequality \eqref{3d1aa} holds on $[0,T^*]$. Then it follows from Lemma \ref{Gron}  that $\mathcal{H}(t)\leq 2C_0+1$ for all $t\in [0,T^*]$. Using Lemma \ref{mphi}, the definition of $\mathcal{H}(t)$ (see \eqref{H}),  \eqref{hatheh1a}, \eqref{smallL4} and the elliptic estimate for $\phi$, we can deduce that
\bea
 && \|\u(t)\|_{\textbf{H}^1}+\|\phi(t)\|_{H^2}+ \|\hat{\theta}(t)\|_{H^2}+\|\theta_t(t)\|\leq C(T^*),\quad \forall\, t\in [0,T^*],\label{eeS1}\\
 && \int_0^{T^*} \left(\|\u(t)\|^2_{\textbf{H}^2}+\|\phi(t)\|_{H^3}^2+\|\u_t(t)\|^2+\|\theta_t(t)\|_{H^1}^2 \right) dt\leq C(T^*).\label{eeS2}
\eea
It follows from equation \eqref{3} and the above estimates that
\bea
\|\phi_t(t)\| &\leq & \|\u(t)\|_{\mathbf{L}^3}\|\nabla \phi(t)\|_{\mathbf{L}^6}+\gamma\|\Delta\phi(t)-W'(\phi(t))\|\non\\
&\leq& C\sup_{t\in[0,T^*]}\left(\|\u(t)\|_{\mathbf{H}^1}\|\phi(t)\|_{H^2}+ \gamma \mathcal{H}(t)^\frac12\right)\non\\
&\leq& C(T^*),\quad \forall\, t\in [0,T^*],\non
\eea
and
\bea
&&\int_0^{T^*} \|\nabla \phi_t(t)\|^2 dt\non\\
&\leq& 2\int_0^{T^*} \|\nabla(\u\cdot\nabla \phi)\|^2dt+2\gamma^2\int_0^{T^*}\|\nabla(\Delta\phi-W'(\phi))\|^2 dt\non\\
&\leq& C\int_0^{T^*} \big(\|\nabla \u\|_{\mathbf{L}^3}^2\|\nabla \phi\|_{\mathbf{L}^6}^2 +\|\u\|_{\mathbf{L}^3}^2\|\nabla^2\phi\|_{\mathbf{L}^6}^2\big)dt+C(T^*)\non\\
&\leq& C\sup_{t\in[0,T^*]}\|\phi(t)\|_{H^2}^2\int_0^{T^*}\|\nabla \u(t)\|_{\mathbf{H}^1}^2 dt\non\\
&& \quad + C\sup_{t\in[0,T^*]}\|\u(t)\|_{\mathbf{H}^1}^2\int_0^{T^*}\|\phi(t)\|_{H^3}^2 dt+C(T^*)\non\\
&\leq& C(T^*),\non
\eea
which yield the estimates for $\|\phi_t\|_{L^\infty(0,T^*; L^2(\Omega))}$ and $\|\phi_t\|_{L^2(0,T^*;H^1(\Omega))}$.

For every $m\in \mathbb{N}$, the approximate solution $(\u^m, \phi^m, \theta^m)$ constructed in the semi-Galerkin scheme (Type A in Appendix) fulfills the above estimates that are independent of the approximate parameter $m$ on the uniform time interval $[0,T^*]$. Thus, the approximate solution $(\u^m, \phi^m, \theta^m)$ can be extended from $[0,T_m]$ to $[0,T^*]$. Then one can apply the classical compactness argument to pass to the limit as $m \to +\infty$ and prove the existence of a local strong solution $(\u, \phi, \theta)$ to the \emph{auxiliary} IBVP of problem \eqref{1}--\eqref{ini} (keeping subsection \ref{uplow} and Remark \ref{uplowremark} in mind). Since this procedure is standard provided that the above uniform estimates are available, we omit the details here.

It remains to verify that the local strong solution satisfies $\theta\in L^2(0,T^*; H^3(\Omega))$. To this end, we apply the elliptic regularity theorem to equation \eqref{newtheta1} for the transformed temperature $\vartheta$ introduced in \eqref{KK}. Using the higher-order estimate \eqref{eeS1}, the Sobolev embedding theorem with those relations \eqref{ttt1}--\eqref{ttt4} and Young's inequality, we have for a.e. $t\in[0,T^*]$
\bea
 \|\vartheta\|_{H^3}
&\leq& c(\|\chi'(\vartheta)(\vartheta_t+\u\cdot\nabla \vartheta)\|_{H^1}+\|\vartheta\|)\non\\
&\leq& c\|\chi'(\vartheta)\|_{L^\infty}\|\vartheta_t+\u\cdot\nabla \vartheta\|_{H^1}
       +c\|\nabla \chi'(\vartheta)\|_{\mathbf{L}^6}\|\vartheta_t+\u\cdot\nabla \vartheta\|_{L^3}+c\|\vartheta\|\non\\
&\leq& c \underline{\kappa}^{-1} (\|\vartheta_t\|_{H^1}+\|\u\|_{\mathbf{L}^6}\|\nabla \vartheta\|_{\mathbf{L}^3}
       +\|\nabla \u\|_{\mathbf{L}^6}\|\nabla \vartheta\|_{\mathbf{L}^3}
       +\|\u\|_{\mathbf{L}^6}\|\nabla^2\vartheta\|_{\mathbf{L}^3})\non\\
&&  \quad   +c \Big\|\frac{\kappa'(\theta)}{\kappa(\theta)^2}\Big\|_{L^\infty}\|\nabla \vartheta\|_{\mathbf{L}^6}(\|\vartheta_t\|_{L^3}
       +\|\u\|_{\mathbf{L}^6}\|\nabla \vartheta\|_{\mathbf{L}^6})+c\|\vartheta\|\non\\
&\leq& C\|\vartheta_t\|_{H^1}+ C\|\u\|_{\mathbf{H}^1}\|\vartheta\|_{H^2}^\frac12\|\vartheta\|_{H^1}^\frac12+C\|\u\|_{\mathbf{H}^2}\|\vartheta\|_{H^2}^\frac12\|\vartheta\|_{H^1}^\frac12
       \non\\
&&  \quad   +C\|\u\|_{\mathbf{H}^1}\|\vartheta\|_{H^3}^\frac12\|\vartheta\|_{H^2}^\frac12
        +C\|\vartheta\|_{H^2}\big(\|\vartheta_t\|_{H^1}+\|\u\|_{\mathbf{H}^1}\|\vartheta\|_{H^2}\big)+c\|\vartheta\|\non\\
&\leq& \frac12 \|\vartheta\|_{H^3}+ C\|\theta_t\|_{H^1}+ C\|\u\|_{\mathbf{H}^2}+C,\label{varthH33D}
\eea
which yields that
\be
 \|\vartheta\|_{H^3}\leq C\|\theta_t\|_{H^1}+ C\|\u\|_{\mathbf{H}^2}+C,\label{varthH33Da}
 \ee
 where the constant $C$ in the above estimates depends on the constant $C(T^*)$ in \eqref{eeS1} and the domain $\Omega$.
 Then we infer from relations \eqref{ttt3}, \eqref{ttt5}, the estimates \eqref{eeS1}, \eqref{eeS2} and \eqref{varthH33Da} that
 $$\int_0^{T^*} \|\theta(t)\|_{H^3}^2 dt \leq C(T^*).$$

  \textbf{Step 2. The original problem.} Finally, we are able to apply the maximum principle for the temperature $\theta$ (see Lemma \ref{mtheta}) to conclude that the local strong solution $(\u, \phi, \theta)$ to the \emph{auxiliary} IBVP of problem \eqref{1}--\eqref{ini} indeed satisfies $\|\theta(t)\|_{L^\infty}\leq \|\theta_0\|_{L^\infty}$ for all $t\in [0,T^*]$. Therefore, it follows from subsection \ref{uplow} that the triple $(\u, \phi, \theta)$ is actually a local strong solution to the \emph{original} problem \eqref{1}--\eqref{ini}. Hence, the proof for the existence of a strong solution is complete.

\subsubsection{Continuous dependence and uniqueness}
 Uniqueness of local strong solutions to  problem \eqref{1}--\eqref{ini} follows from a continuous dependence result on the initial data. Let $(\u_1, \phi_1, \theta_1)$ and $(\u_2, \phi_2, \theta_2)$
be two strong solutions on the time interval $[0,T^*]$ that start from the initial data
 $(\u_{01}, \phi_{01}, \theta_{01})$, $(\u_{02}, \phi_{02}, \theta_{02})$ $\in \mathbf{V}
\times H^2(\Omega) \times \big(H^2(\Omega)\cap H^1_0(\Omega)\big)$, respectively. Denote the differences by
\bea
&& \bar{\u}=\u_1-\u_2, \ \ \bar{\phi}=\phi_1-\phi_2, \ \ \bar{\theta}=\theta_1-\theta_2,\non\\
&& \bar{\u}_0=\u_{01}-\u_{02}, \ \ \bar{\phi}_0=\phi_{01}-\phi_{02}, \ \ \bar{\theta}_0=\theta_{01}-\theta_{02}.\non
\eea
 We can see that $(\bar \u, \bar \phi, \bar \theta)$ satisfy
 \bea
 &&\langle \bar \u_t, \mathbf{v}\rangle_{\mathbf{V}', \mathbf{V}}+
 \int_\Omega (\u_1\cdot\nabla \bar \u + \bar \u\cdot\nabla \u_2)\cdot \mathbf{v} dx
 +2\int_\Omega\mu(\theta_1) \mathcal{D}\bar{\u} : \mathcal{D} \mathbf{v}dx \non\\
 && \quad +2 \int_\Omega [\mu(\theta_1)-\mu(\theta_2)]\mathcal{D} \u_2 : \mathcal{D} \mathbf{v} dx\non\\
 &=&\int_\Omega [\lambda(\theta_1)\nabla\phi_1\otimes\nabla\phi_1-\lambda(\theta_2)\nabla\phi_2\otimes\nabla\phi_2] : \nabla \mathbf{v} dx
 + \int_\Omega  \bar \theta \mathbf{g} \cdot \mathbf{v} dx,\, \forall\,
 v\in \mathbf{V},\label{dns}\\
 &&\bar \phi_t+ \u_1\cdot \nabla \bar \phi + \bar \u \cdot \nabla\phi_2=\gamma [\Delta \bar\phi- W'(\phi_1)+W'(\phi_2)],\label{dphi}\\
 &&\bar \theta_t+ \u_1 \cdot \nabla \bar \theta+\bar \u \cdot\nabla \theta_2=\nabla \cdot[\kappa(\theta_1)\nabla \bar\theta]+\nabla \cdot[(\kappa(\theta_1)-\kappa(\theta_2))\nabla \theta_2],\label{dthe}\\
 &&  \bar \phi|_\Gamma=0, \quad  \bar \theta|_\Gamma=0,\label{dbc1}\\
&& \bar \u|_{t=0}=\bar{\mathbf{u}}_0, \quad \bar \phi|_{t=0}=\bar{\phi}_0,  \quad \bar\theta|_{t=0}=\bar{\theta}_0. \label{dini}
 \eea
Taking $\mathbf{v}=\bar{\u}$ in \eqref{dns}, multiplying \eqref{dphi} by
$-\Delta\bar{\phi}$ and \eqref{dthe} by
$-\Delta \bar{\theta}$, respectively, integrating over $\Omega$ and adding up
these three resultants, then performing integration by parts and
using the incompressible condition for the velocities, we get
\bea
&&\frac12\frac{d}{dt}\big(\|\bar{\u}\|^2+\|\nabla\bar{\phi}\|^2+\|\nabla \bar{\theta}\|^2
\big) + 2\int_\Omega \mu(\theta_1)|\mathcal{D}\bar{\u}|^2dx\non\\
&&\quad +\gamma\|\Delta\bar{\phi}\|^2 +\int_\Omega \kappa(\theta_1)|\Delta \bar{\theta}|^2dx\non\\
&=&-\int_\Omega (\bar{\u}\cdot\nabla{\u_2})\cdot \bar{\u} dx
   -2\int_\Omega [\mu(\theta_1)-\mu(\theta_2)] \mathcal{D} \u_2 : \mathcal{D} \bar \u dx\non\\
&& + \int_\Omega \lambda(\theta_1)(\nabla\phi_1\otimes\nabla\phi_1-\nabla\phi_2\otimes\nabla\phi_2) : \nabla \bar \u dx\non\\
&& +\int_\Omega [\lambda(\theta_1)-\lambda(\theta_2)](\nabla\phi_2\otimes\nabla\phi_2): \nabla \bar \u dx +\int_\Omega \bar \theta \mathbf{g}\cdot \bar \u dx\non\\
&& + \int_\Omega (\u_1 \cdot \nabla \bar \phi) \Delta \bar \phi dx + \int_\Omega (\bar \u\cdot \nabla \phi_2 )\Delta \bar \phi dx + \gamma \int_\Omega [W'(\phi_1)-W'(\phi_2)]\Delta \bar \phi dx\non\\
&& + \int_\Omega (\u_1 \cdot \nabla \bar \theta) \Delta \bar \theta dx +  \int_\Omega (\bar \u \cdot \nabla \theta_2)\Delta \bar \theta dx -\int_\Omega \kappa'(\theta_1)(\nabla \theta_1\cdot\nabla \bar \theta)\Delta \bar\theta dx\non\\
&& -\int_\Omega [\kappa(\theta_1)-\kappa(\theta_2)]\Delta \theta_2 \Delta \bar \theta dx
   -\int_\Omega [\kappa'(\theta_1)\nabla \theta_1-\kappa'(\theta_2)\nabla \theta_2]\cdot \nabla \theta_2 \Delta \bar \theta dx
   \non\\
&:=& \sum_{m=1}^{13} D_m.\label{eni}
 \eea
Using the $L^\infty$-estimate for $\phi_i, \theta_i$ (recall Lemmas \ref{mphi}, \ref{mtheta}), the H\"older inequality, Agmon's inequality and the Sobolev embedding theorem ($n=3$), we proceed to estimate the right-hand side of \eqref{eni} term by term.
 \bea D_1&\leq& \|\bar{\u}\|_{\mathbf{L}^4}^2\|\nabla{\u_2}\|\non\\
     &\leq& C\|\nabla\bar{\u}\|^\frac32 \|\bar{\u}\|^\frac12\|\nabla \u_2\|\non\\
     &\leq& \epsilon \underline{\mu}\|\nabla\bar{\u}\|^2+C\|\nabla \u_2\|^4\|\bar{\u}\|^2,\non
 \eea
 \bea
 D_2&\leq& 2\|\mu(\theta_1)-\mu(\theta_2)\|_{L^\infty}\|\nabla \u_2\|\|\nabla \bar \u\|\non\\
 &\leq& C\|\mu'\|_{L^\infty}\|\bar\theta\|_{L^\infty}\|\nabla \u_2\|\|\nabla \bar \u\|\non\\
 &\leq& \epsilon \underline{\mu}\|\nabla\bar{\u}\|^2 +C\|\nabla  \u_2\|^2 \|\Delta \bar \theta\|\|\nabla \bar\theta\|\non\\
 &\leq& \epsilon \underline{\mu}\|\nabla\bar{\u}\|^2+\epsilon \underline{\kappa} \|\Delta \bar \theta\|^2+ C\|\nabla  \u_2\|^4\|\nabla \bar\theta\|^2,\non
 \eea
 \bea D_3 &\leq& \|\lambda(\theta_1)\|_{L^\infty}(\|\nabla \phi_1\|_{\mathbf{L}^6}+\|\nabla \phi_2\|_{\mathbf{L}^6})\|\nabla \bar\phi\|_{\mathbf{L}^3}\|\nabla \bar \u\|\non\\
 &\leq& C(\|\phi_1\|_{H^2}+\|\phi_2\|_{H^2})\|\nabla \bar\phi\|_{\mathbf{H}^1}^\frac12\|\nabla \bar\phi\|^\frac12\|\nabla \bar \u\|\non\\
 &\leq& \epsilon \underline{\mu}\|\nabla \bar \u\|^2+\epsilon\gamma\|\Delta\bar \phi\|^2+C (\|\phi_1\|_{H^2}^4+\|\phi_2\|_{H^2}^4)\|\nabla \bar \phi\|^2,\non
 \eea
 \bea
D_4&\leq& \lambda_0|b|\|\bar \theta\|_{L^\infty}\|\nabla \phi_2\|_{\mathbf{L}^4}^2\|\nabla\bar \u\|\non\\
&\leq& C\|\bar \Delta \theta\|^\frac12\|\nabla \bar \theta\|^\frac12\|\phi_2\|_{H^2}^2\|\nabla\bar \u\|\non\\
&\leq&\epsilon \underline{\mu}\|\nabla\bar{\u}\|^2+\epsilon \underline{\kappa} \|\Delta \bar \theta\|^2+ C\|\phi_2\|_{H^2}^4\|\nabla \bar\theta\|^2,\non
 \eea
 \be
 D_5\leq C\|\bar\theta\|\|\bar \u\|\leq C\|\bar \u\|^2+C\|\nabla \bar \theta\|^2,\non
 \ee
\bea
 D_6 +D_9 &\leq& \|\u_1\|_{\mathbf{L}^6}(\|\nabla \bar \phi\|_{\mathbf{L}^3}\|\Delta \bar \phi\|+\|\nabla \bar \theta\|_{\mathbf{L}^3}\|\Delta \bar \theta\|)\non\\
 &\leq& C\|\nabla \u_1\|(\|\nabla \bar\phi\|_{\mathbf{H}^1}^\frac12\|\nabla \bar\phi\|^\frac12\|\Delta \bar \phi\|+\|\nabla \bar \theta\|_{\mathbf{H}^1}^\frac12\|\nabla \bar \theta\|^\frac12\|\Delta \bar \theta\|)\non\\
 &\leq& \epsilon \gamma \|\Delta \bar \phi\|^2+ \epsilon \underline{\kappa}\|\Delta \bar\theta\|^2+C\|\nabla \u_1\|^4(\|\nabla \bar\phi\|^2+\|\nabla \bar\theta\|^2),\non
\eea
\bea
 D_7+D_{10}&\leq& \|\bar \u\|_{\mathbf{L}^3}(\|\nabla \phi_2\|_{\mathbf{L}^6}\|\Delta  \bar \phi\|+\|\nabla \theta_2\|_{\mathbf{L}^6}\|\Delta\bar \theta\|)\non\\
 &\leq& C\|\bar\u\|^\frac12\|\nabla \u\|^\frac12(\|\phi_2\|_{H^2}\|\Delta  \bar \phi\|+\|\theta_2\|_{H^2}\|\Delta \bar\theta\|)\non\\
 &\leq& \epsilon \underline{\mu}\|\nabla\bar{\u}\|^2+\epsilon \gamma \|\Delta \bar \phi\|^2+ \epsilon \underline{\kappa}\|\Delta \bar\theta\|^2+C(\|\phi_2\|_{H^2}^4+\|\theta_2\|_{H^2}^4)\|\bar \u\|^2,\non
\eea
 \bea
D_8 &\leq& \gamma \Big\|\bar\phi\int_0^1 W''(s
\phi_1+(1-s)\phi_2) ds\Big\|_{L^\infty}\|\Delta \bar{\phi}\| \non\\
&\leq&
C\gamma\|W''\|_{L^\infty}\|\bar{\phi}\|_{L^\infty}\|\Delta
\bar{\phi}\|\non\\
&\leq& C\gamma\|\Delta \bar \phi\|^\frac32 \|\nabla\bar\phi\|^\frac12 \non\\
&\leq&
\epsilon \gamma\|\Delta\bar{\phi}\|^2+C\|\nabla\bar{\phi}\|^2,\non
 \eea
 \bea D_{11} &\leq& \|\kappa'(\theta_1)\|_{L^\infty}\|\nabla \theta_1\|_{\mathbf{L}^6}\|\nabla \bar \theta\|_{\mathbf{L}^3}\|\Delta \bar \theta\|\non\\
 &\leq& C\|\theta_1\|_{H^2}\|\nabla \bar \theta\|^\frac12\|\Delta \bar \theta\|^\frac32\non\\
 &\leq& \epsilon\underline{\kappa}\|\Delta \bar \theta\|^2+ C \|\theta_1\|_{H^2}^4\|\nabla \bar \theta\|^2,\non
 \eea
 \bea
D_{12}&\leq&\|\kappa'\|_{L^\infty} \|\bar \theta\|_{L^\infty}\|\Delta\theta_2\|\|\Delta \bar \theta\|\non\\
&\leq& C\|\theta_2\|_{H^2} \|\nabla \bar \theta\|^\frac12\|\Delta \bar \theta\|^\frac32\non\\
&\leq& \epsilon \underline{\kappa}\|\Delta \bar \theta\|^2+ C\|\theta_2\|_{H^2}^4\|\nabla \bar\theta\|^2,\non
 \eea
\bea
D_{13}&\leq& \|\kappa'(\theta_1)\|_{L^\infty}\|\nabla \bar \theta\|_{\mathbf{L}^3}\|\nabla \theta_2\|_{\mathbf{L}^6}\| \Delta \bar \theta\|
           +\|\kappa''\|_{L^\infty}\|\bar \theta\|_{L^\infty}\|\nabla \theta_2\|_{\mathbf{L}^4}^2\| \Delta \bar \theta\|\non\\
&\leq& C\|\nabla \bar \theta\|_{\mathbf{H}^1}^\frac12\|\nabla \bar \theta\|^\frac12\|\theta_2\|_{H^2}\| \Delta \bar \theta\|
 + C\|\Delta \bar\theta\|^\frac12\|\nabla \bar\theta\|^\frac12\|\theta_2\|_{H^2}^2\|\Delta \bar \theta\|\non\\
&\leq& \epsilon \underline{\kappa}\|\Delta \bar \theta\|^2+C\|\theta_2\|_{H^2}^4\|\nabla \bar \theta\|^2.\non
\eea
Taking the constant $\epsilon>0$ to be sufficiently small in the above estimates, we infer from \eqref{eni} that
 \bea && \frac{d}{dt}(\|\bar{\u}\|^2+\|\nabla \bar{\phi}\|^2+\|\nabla \bar{\theta}\|^2)
+\underline{\mu}\|\nabla\bar{\u}\|^2+\gamma\|\Delta\bar{\phi}\|^2+\underline{\kappa}\|\Delta\bar{\theta}\|^2
\non\\
&\leq& Q(t)(\|\bar{\u}\|^2+\|\nabla \bar{\phi}\|^2+\|\nabla \bar{\theta}\|^2), \quad \forall\, t\in [0,T^*], \non
 \eea
  where $$Q(t)=C(\|\nabla \u_1\|^4+\|\nabla \u_2\|^4+\|\phi_1\|_{H^2}^4+ \|\phi_2\|_{H^2}^4+\|\theta_1\|_{H^2}^4+\|\theta_2\|_{H^2}^4)$$
with $C$ being a constant that may depend  on  $\Omega$, $\|\phi_0\|_{L^\infty}$, $\|\theta_0\|_{L^\infty}$ and
coefficients of the system.

Since the $\mathbf{V}\times H^2(\Omega)\times
H^2(\Omega)$-norm of these two strong solutions $(\u_i, \phi_i, \theta_i)$ ($i=1,2$) to problem
\eqref{1}--\eqref{ini} are bounded on $[0,T^*]$ (recall \eqref{eeS1}), then the quantity $Q(t)$ is also bounded on $[0,T^*]$ such that $Q(t)\leq C(T^*)$ for $t\in [0,T^*]$. As a consequence, it easily follows from the Gronwall inequality that
\bea
&&\|\bar{\u}(t)\|^2+\|\nabla \bar{\phi}(t)\|^2+\|\nabla \bar{\theta}(t)\|^2
+\int_0^t\Big(\underline{\mu}\|\nabla\bar{\u}\|^2+\gamma\|\Delta\bar{\phi}\|^2+\underline{\kappa}\|\Delta\bar{\theta}\|^2\Big)d\tau\non\\
&\leq& \left(\|\bar{\u}_0\|^2+\|\nabla \bar{\phi}_0\|^2+\|\nabla \bar{\theta}_0\|^2\right)e^{C(T^*)t},\quad \forall\, t\in [0,T^*].\label{conti}
\eea
The above estimate indicates that the local strong solution to problem \eqref{1}--\eqref{ini} continuously depends on its initial data in $\mathbf{H}\times H^1(\Omega)\times H^1(\Omega)$, which also immediately yields the uniqueness result.

The proof of Theorem \ref{str3D} is now complete.

\begin{remark}
Because of  the highly nonlinear structure of problem \eqref{1}--\eqref{ini}, we are not able to improve the continuous dependence result for strong solutions to the higher-order space like $\mathbf{V}\times H^2\times H^2$ when the spatial dimension is three. The two dimensional case might be possible, but the proof would be rather involved. We refer to \cite{huang} for a continuous dependence result in the $H^2$-topology for the $2D$ Boussinesq system with variable viscosity and thermal diffusivity (without coupling with the phase-field equation).
\end{remark}


\section{Global Solutions in $2D$}
\setcounter{equation}{0}

\subsection{Global weak solutions}
Similar to Section 3, the existence of global weak solutions can be proved by using a suitable semi-Galerkin approximate scheme, provided that certain uniform lower-order global-in-time estimates can be obtained. However, such global estimates are in general not available due to the lack of dissipative energy law for problem \eqref{1}--\eqref{ini} (cf. Step 1 in the proof of Lemma \ref{high3d}). In order to avoid this difficulty, we shall assume that the initial temperature variation i.e., $\|\theta_0\|_{L^\infty}$ is suitably bounded. Moreover, this bound will be preserved in the associated Galerkin approximation (i.e., Type B in Appendix).

\subsubsection{$L^\infty$-bound for the initial temperature}
 For any function $\phi\in H^2(\Omega)$ with $\phi|_{\Gamma}=\phi_b\in H^\frac32(\Gamma)$, we recall the following interpolation inequality as well as the elliptic estimate in $2D$:
\bea
\|\nabla \phi\|_{\mathbf{L}^4}^2 &\leq& c_1\|\phi\|_{H^2}\|\phi\|_{L^\infty},\label{GNaa}\\
\|\phi\|_{H^2} &\leq& c_2\Big(\|\Delta\phi\|+\|\phi\|+\|\phi_b\|_{H^\frac32(\Gamma)}\Big).\label{GNaab}
\eea
Similarly, we have the interpolation inequality in $2D$ for any $\theta\in H^2(\Omega)\cap H^1_0(\Omega)$:
\bea
\|\nabla \theta\|_{\mathbf{L}^4}^2 &\leq&  c_3\|\Delta \theta\|\|\theta\|_{L^\infty},\label{GNbb}
\eea
 In \eqref{GNaa}--\eqref{GNbb}, $c_1$, $c_2$ and $c_3$ are positive constants that only depend on $\Omega$.

Since the viscosity function $\mu(\cdot)$ is continuous and $\mu(s)>0$ for all $s\in \mathbb{R}$, we deduce that
 \be
 \Theta_1:= \sup_{l\geq 0}\left\{\min\left\{l,\ \ \frac{1}{2c_1c_2|b|}\sqrt{\frac{a\gamma\min_{s\in[-l, l]}\mu(s)}{2\lambda_0}}\right\}\right\}\label{Theta1}
 \ee
 is a finite positive constant. Indeed, by monotonicity, $\Theta_1$ satisfies the relation
 $$\Theta_1=\frac{1}{2c_1c_2|b|}\sqrt{\frac{a\gamma\min_{s\in[-\Theta_1, \Theta_1]}\mu(s)}{2\lambda_0}}.$$
 After fixing the number $\Theta_1$, we set (compare with subsection \ref{uplow})
 \bea
 && \underline{\mu}:=\min_{s\in[-\Theta_1, \Theta_1]}\mu(s)>0,\label{lomu}\\
 && \overline{\mu}:=\max_{s\in[-\Theta_1, \Theta_1]}\mu(s)>0.\label{higmu}
 \eea
Similarly, since the thermal diffusivity $\kappa(\cdot)$ is a $C^2$ function and $\kappa(s)>0$ for all $s\in \mathbb{R}$, we can set
 \bea
 &&\underline{\kappa}:=\min_{s\in[-\Theta_1, \Theta_1]}\kappa(s)>0,\label{lokap}\\
 &&\overline{\kappa}:=\max_{s\in[-\Theta_1, \Theta_1]}\kappa(s)>0.\label{higkap}
 \eea
 Moreover, there exists a constant $\Theta_2\in (0, \Theta_1]$ such that
\be
\Theta_2=\max\left\{l: \ l\in (0,\Theta_1] \ \ \text{and}\ \ \max_{s\in [-l, l]}l|\kappa'(s)|\leq \frac{\underline{\kappa}}{4c_3}\right\}. \label{2.7}
\ee
\begin{remark}\label{lubd2d}
In the remaining part of this section, we shall assume that the $L^\infty$-norm of the initial temperature $\theta_0$ satisfies either $\|\theta_0\|_{L^\infty}\leq \Theta_1$ or $\|\theta_0\|_{L^\infty}\leq \Theta_2\in (0, \Theta_1]$. It is easy to see from the definition that
$$\min_{s\in[-\Theta_1, \Theta_1]}\mu(s)\leq \min_{s\in[-\Theta_2, \Theta_2]}\mu(s)\leq \max_{s\in[-\Theta_2, \Theta_2]}\mu(s)\leq \max_{s\in[-\Theta_1, \Theta_1]}\mu(s),$$
$$\min_{s\in[-\Theta_1, \Theta_1]}\kappa(s)\leq \min_{s\in[-\Theta_2, \Theta_2]}\kappa(s)\leq \max_{s\in[-\Theta_2, \Theta_2]}\kappa(s)\leq \max_{s\in[-\Theta_1, \Theta_1]}\kappa(s).$$
Then by the weak maximum principle Lemma \ref{mtheta}, for both cases we can simply choose the positive lower and upper bounds $\underline{\mu}$, $\overline{\mu}$, $\underline{\kappa}$, $\overline{\kappa}$ for the viscosity $\mu(\cdot)$ and thermal diffusivity $\kappa(\cdot)$ as in \eqref{lomu}--\eqref{higkap}.
\end{remark}

\subsubsection{\emph{A priori} estimates}\label{sec:glow2D}
 After the above preparations, in what follows, we derive some \emph{a priori} estimates on lower-order norms of solutions to problem \eqref{1}--\eqref{ini}.
 \bp \label{BEL1}
 Let $n=2$. Suppose that $|\phi_0|\leq 1$ a.e. in $\Omega$, $|\phi_b|\leq 1$ on $\Gamma$ and  $\theta_0$ satisfies the assumption $\|\theta_0\|_{L^\infty}\leq \Theta_1$, where $\Theta_1$ is given in \eqref{Theta1}. Then for arbitrary $T\in(0,+\infty)$, the smooth solution $(\u, \phi, \theta)$ to
problem \eqref{1}--\eqref{ini} satisfies the following estimates
 \be
 \begin{cases}
 \|\u\|_{L^\infty(0,T; \mathbf{H})}+\|\u\|_{L^2(0,T;\mathbf{V})}\leq C_T,\\
 \|\phi\|_{L^\infty(0,T; H^1)}+\|\phi\|_{L^2(0,T; H^2)}\leq C_T,\\
 \|\theta\|_{L^\infty(0,T; H^1)}+\|\theta\|_{L^2(0,T; H^2)}+\|\theta_t\|_{L^2(0,T;L^2)}\leq C_T,
 \end{cases}\label{es1}
 \ee
 where $C_T$ is a constant depending on $\|\u_0\|$, $\|\phi_0\|_{H^1\cap L^\infty}$,  $\|\phi_b\|_{H^\frac32(\Gamma)}$, $\|\theta_0\|_{H^1}$, $\Theta_1$, $T$, $\Omega$ and coefficients of the system.
\ep
\begin{proof}
 Similar to Step 1 of the proof to Lemma \ref{high3d}, we still have the differential equality \eqref{up1}. Recalling the estimate \eqref{J1J3} for the sum $J_1+J_3$, using weak maximum principles for $\phi$, $\theta$ (see Lemmas \ref{mphi}, \ref{mtheta}), and the estimates \eqref{GNaa}, \eqref{GNaab}, \eqref{Theta1}, \eqref{lomu}, we obtain that
\bea
 J_1+J_3&\leq& \frac{\underline{\mu}}{4}\|\nabla \u\|^2+\frac{|b|^2\lambda_0^2}{\underline{\mu}}\|\theta\|_{L^\infty}^2\|\nabla\phi\|_{\mathbf{L}^4}^4\non\\
 &\leq& \frac{\underline{\mu}}{4}\|\nabla \u\|^2+ \frac{c_1^2|b|^2\lambda_0^2}{\underline{\mu}}\|\theta\|_{L^\infty}^2\|\phi\|_{H^2}^2\|\phi\|_{L^\infty}^2\non\\
 &\leq&  \frac{\underline{\mu}}{4}\|\nabla \u\|^2+\frac{4c_1^2c_2^2|b|^2\lambda_0^2}{\underline{\mu}}\| \theta\|_{L^\infty}^2\|\phi\|_{L^\infty}^2\non\\
&&\quad \times \Big(\|\Delta\phi-W'(\phi)\|^2+\|W'(\phi)\|^2+\|\phi\|^2+\|\phi_b\|_{H^\frac32(\Gamma)}^2\Big)\non\\
&\leq& \frac{\underline{\mu}}{4}\|\nabla \u\|^2+\frac{ a\lambda_0\gamma}{2}\|\Delta\phi-W'(\phi)\|^2+C.\label{J11J33}
 \eea
 The term $J_2$ in \eqref{up1} can be estimated in the same way as in \eqref{J2}.
 Therefore, we can deduce from \eqref{up1}, \eqref{J2} and \eqref{J11J33} that:
\bea
&&\frac{d}{dt}\Big(\|\u\|^2+a\lambda_0\|\nabla\phi\|^2+2a\lambda_0\int_{\Omega}W(\phi)dx
 \Big)\non\\
 &&\quad  +\underline{\mu}\|\nabla{\u}\|^2 +a\lambda_0\gamma\|\Delta\phi-W'(\phi)\|^2 \non\\
 &\leq& C,  \non
 \eea
 where $C$ is a constant depending on $\|\phi_0\|_{L^\infty}$,  $\|\phi_b\|_{H^\frac32(\Gamma)}$, $\|\theta_0\|_{L^\infty}$ and coefficients of the system.
 Integrating the above inequality with respective to time, we infer from Young's inequality that for any $T\in(0,+\infty)$, it holds
 \be
 \sup_{t\in[0,T]}(\|\u(t)\|^2+\|\phi(t)\|_{H^1}^2)+\int_0^T(\|\nabla \u(t)\|^2+\|\phi(t)\|_{H^2}^2) dt\leq C_T,\label{ueslow1}
 \ee
 where $C_T$ is a constant depending on $\|\u_0\|$, $\|\phi_0\|_{H^1\cap L^\infty}$,  $\|\phi_b\|_{H^\frac32(\Gamma)}$, $\|\theta_0\|_{L^\infty}$, $T$, $\Omega$ and coefficients of the system.

 Next, keeping the assumption $\|\theta_0\|_{L^\infty}\leq \Theta_1$ in mind, using the temperature transformation \eqref{KK}, we can apply Lemmas \ref{mtheta}, \ref{newt} to problem \eqref{newtheta1} with the diffusivity $\tilde{\kappa}(\vartheta)=\frac{1}{\chi'(\vartheta)}$ to get (here $n=2$)
\be
\frac{d}{dt}\|\nabla \vartheta\|^2 +\frac{1}{\overline{\kappa}} \|\vartheta_t\|^2\leq C(1+\|\u\|^2\|\nabla \u\|^2)\|\nabla \vartheta\|^2.\non
\ee
It follows from \eqref{ueslow1} and the Gronwall lemma that
\be
\|\nabla \vartheta(t)\|^2+\int_0^t\|\vartheta_t(\tau)\|^2 d\tau\leq C_T,\quad \forall \, t\in [0,T],\label{varthH1}
\ee
 which together with the estimate \eqref{h2the} ($n=2$), \eqref{ueslow1} and \eqref{varthH1} yields that
\be
\int_0^t \|\vartheta(\tau)\|_{H^2}^2 d\tau\leq C_T,\quad \forall \, t\in [0,T].\label{varthH1h2}
\ee
 Then we easily infer from \eqref{varthH1}, \eqref{varthH1h2}, the relations \eqref{ttt1}, \eqref{ttt3} and \eqref{GNbb}  that
\be
\|\nabla \theta(t)\|^2+\int_0^t(\|\theta_t(\tau)\|^2+\| \theta(\tau)\|_{H^2}^2) d\tau\leq C_T,\quad \forall \ t\in [0,T],\label{theh1}
\ee
 where $C_T$ is a constant depending on $\|\u_0\|$, $\|\phi_0\|_{H^1\cap L^\infty}$,  $\|\phi_b\|_{H^\frac32(\Gamma)}$, $\|\theta_0\|_{H^1\cap L^\infty}$, $T$, $\Omega$ and coefficients of the system. The proof is complete.
 \end{proof}

 If the initial temperature $\theta_0$ fulfills a slightly stronger assumption, indeed we can show that problem \eqref{1}--\eqref{ini} admits a dissipative energy law:

 \begin{lemma} \label{BEL2}
 Let $n=2$. We assume that $|\phi_0|\leq 1$ a.e. in $\Omega$, $|\phi_b|\leq 1$ on $\Gamma$ and  $\theta_0$ satisfies $\|\theta_0\|_{L^\infty}\leq \Theta_2$, where $\Theta_2$ is given in \eqref{2.7}.
 Introduce the energy functional
 \be \mathcal{E}(t)=
\|\u(t)\|^2+a\lambda_0\|\nabla\phi(t)\|^2+2a\lambda_0\int_{\Omega}W(\phi(t))dx
+\zeta\|\nabla\theta(t)\|^2+\omega\|\theta(t)\|^2,
 \label{toenergy}
 \ee
 where the
constants $\zeta, \omega>0$ may depend on $\Omega$, $\Theta_2$ and
coefficients of problem \eqref{1}--\eqref{ini}. Then the following energy inequality holds:
 \be
\frac{d }{dt}\mathcal{E}(t)+\frac{\underline{\mu}}{2}\|\nabla
\u\|^2+a\lambda_0\gamma\|\Delta\phi+W'(\phi)\|^2+\frac{\zeta\underline{\kappa}}{2}\|\Delta\theta\|^2
\leq 0, \quad \forall\, t>0.
 \label{bela}
  \ee
\end{lemma}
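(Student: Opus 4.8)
The plan is to assemble $\mathcal{E}(t)$ from three separate differential identities — one for the kinetic/mixing energy of $(\u,\phi)$ and two for the temperature — and then to fix the weights $\zeta,\omega$ so that every coupling term is paid for by the available dissipation, using the \emph{sharp} calibration of $\Theta_1$ and $\Theta_2$ (not just positivity of $\mu,\kappa$).

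First I would reproduce the basic $(\u,\phi)$ balance \eqref{up1}, testing \eqref{1} with $\u$ and \eqref{3} with $-a\lambda_0(\Delta\phi-W'(\phi))$. The $a$-part of the capillary stress cancels exactly against the transport term (the cancellation recalled in the Introduction), leaving $J_1+J_3=-b\lambda_0\int_\Omega\theta\,(\nabla\phi\otimes\nabla\phi):\nabla\u\,dx$. Bounding this by $|b|\lambda_0\|\theta\|_{L^\infty}\|\nabla\phi\|_{\mathbf{L}^4}^2\|\nabla\u\|$, invoking the maximum principles (Lemmas \ref{mphi}, \ref{mtheta}) so that $\|\phi\|_{L^\infty}\le1$ and $\|\theta\|_{L^\infty}\le\Theta_2\le\Theta_1$, and then using \eqref{GNaa}, \eqref{GNaab} exactly as in \eqref{J11J33}, the defining relation for $\Theta_1$ in \eqref{Theta1} is tuned so that $J_1+J_3$ is absorbed into $\tfrac{\underline{\mu}}{4}\|\nabla\u\|^2+\tfrac{a\lambda_0\gamma}{2}\|\Delta\phi-W'(\phi)\|^2$ plus lower-order contributions. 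Together with the buoyancy estimate \eqref{J2} this yields
\[
\f{d}{dt}\Big(\|\u\|^2+a\lambda_0\|\nabla\phi\|^2+2a\lambda_0\int_\Omega W(\phi)\,dx\Big)+\underline{\mu}\|\nabla\u\|^2+a\lambda_0\gamma\|\Delta\phi-W'(\phi)\|^2\le C\|\theta\|^2+(\text{lower-order terms}).
\]

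The temperature is treated by two tests of \eqref{4}. Testing with $\theta$ gives the clean identity $\tfrac12\f{d}{dt}\|\theta\|^2+\int_\Omega\kappa(\theta)|\nabla\theta|^2\,dx=0$, hence $\f{d}{dt}\|\theta\|^2+2\underline{\kappa}\|\nabla\theta\|^2\le0$. Testing with $-\Delta\theta$ gives $\tfrac12\f{d}{dt}\|\nabla\theta\|^2+\int_\Omega\kappa(\theta)|\Delta\theta|^2\,dx=\int_\Omega(\u\cdot\nabla\theta)\Delta\theta\,dx-\int_\Omega\kappa'(\theta)|\nabla\theta|^2\Delta\theta\,dx$, and here two points are decisive. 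For the convective term I would integrate by parts once more, using $\nabla\cdot\u=0$ and $\u|_\Gamma=\mathbf{0}$, to obtain $\int_\Omega(\u\cdot\nabla\theta)\Delta\theta\,dx=-\int_\Omega\partial_i u_j\,\partial_j\theta\,\partial_i\theta\,dx$, which after \eqref{GNbb} and $\|\theta\|_{L^\infty}\le\Theta_2$ is bounded by $c_3\Theta_2\|\nabla\u\|\|\Delta\theta\|$; this keeps the right-hand side \emph{linear} in $\|\nabla\u\|^2$, instead of the cubic $\|\u\|^2\|\nabla\u\|^2$ a crude Ladyzhenskaya bound would force (and which could not be absorbed by constants independent of the data). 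For the $\kappa'$ term, $\|\kappa'(\theta)\|_{L^\infty}\|\nabla\theta\|_{\mathbf{L}^4}^2\|\Delta\theta\|\le c_3\|\kappa'(\theta)\|_{L^\infty}\|\theta\|_{L^\infty}\|\Delta\theta\|^2$, and the definition \eqref{2.7} of $\Theta_2$ is precisely what forces $c_3\Theta_2\max_{|s|\le\Theta_2}|\kappa'(s)|\le\tfrac{\underline{\kappa}}{4}$, so this is swallowed by $\tfrac{\underline{\kappa}}{4}\|\Delta\theta\|^2$. The net outcome is $\f{d}{dt}\|\nabla\theta\|^2+\underline{\kappa}\|\Delta\theta\|^2\le C\Theta_2^2\|\nabla\u\|^2$.

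Finally I would form $\mathcal{E}$ as in \eqref{toenergy} and add the three inequalities: choosing $\zeta$ small enough that $\zeta C\Theta_2^2\le\tfrac{\underline{\mu}}{2}$ transfers the $\|\nabla\u\|^2$ produced by the $\nabla\theta$-estimate into half of the viscous dissipation, while choosing $\omega$ large enough that $2\omega\underline{\kappa}$ dominates (via Poincaré) the buoyancy remainder $C\|\theta\|^2$; what survives on the dissipation side is then $\tfrac{\underline{\mu}}{2}\|\nabla\u\|^2$, the full $a\lambda_0\gamma\|\Delta\phi-W'(\phi)\|^2$ and $\tfrac{\zeta\underline{\kappa}}{2}\|\Delta\theta\|^2$, which is exactly \eqref{bela}. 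I expect the main obstacle to be this endgame: the weights $\zeta,\omega$ must be pinned down simultaneously (each entering both $\mathcal{E}$ and the dissipation), and one must check that every residual term — in particular the constant-order remainder left by $J_1+J_3$ through $\|W'(\phi)\|$, $\|\phi\|$ and $\|\phi_b\|_{H^{3/2}(\Gamma)}$, which does not scale with any dissipation norm — is genuinely controlled so that the right-hand side is nonpositive. It is here that the precise smallness built into \eqref{Theta1} and \eqref{2.7}, rather than mere positivity of $\mu(\cdot)$ and $\kappa(\cdot)$, is indispensable, and verifying that no lower-order term survives is the delicate part of the argument.
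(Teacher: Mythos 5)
Your route is the paper's route: the same three tests (equation \eqref{1} against $\u$, equation \eqref{3} against $-a\lambda_0(\Delta\phi-W'(\phi))$, equation \eqref{4} against $-\zeta\Delta\theta$ and against $2\omega\theta$), the same integration by parts on the convective term $\int_\Omega(\u\cdot\nabla\theta)\Delta\theta\,dx$ to keep the right-hand side linear in $\|\nabla\u\|^2$, the same use of \eqref{Theta1} to absorb the $\|\Delta\phi-W'(\phi)\|^2$-part of $J_1+J_3$ and of \eqref{2.7} to absorb the $\kappa'$-term, and the same final calibration $\zeta=\underline{\mu}\underline{\kappa}/(4c_3^2\Theta_2^2)$, $\omega\sim C_1/\underline{\kappa}$. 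But the one step you explicitly leave open --- the ``constant-order remainder'' from $J_1+J_3$ involving $\|W'(\phi)\|^2+\|\phi\|^2+\|\phi_b\|^2_{H^{3/2}(\Gamma)}$ --- is a genuine gap, and it is not closed by any further smallness hidden in \eqref{Theta1} or \eqref{2.7}. If you bound that remainder by a constant (e.g.\ via $\|\theta\|_{L^\infty}\le\Theta_2$ and $|\phi|\le 1$), the combined inequality reads $\frac{d}{dt}\mathcal{E}+(\text{dissipation})\le C$ with $C>0$, and \eqref{bela}, which has zero on the right, does not follow.

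The missing idea is that this remainder is not constant-order at all: every one of those terms carries the prefactor $\|\theta\|_{L^\infty}^2$ coming from $\lambda'(\theta)=-b\lambda_0$, and in two dimensions Agmon's inequality for $\theta\in H^2(\Omega)\cap H^1_0(\Omega)$ gives $\|\theta\|_{L^\infty}^2\le c\,\|\Delta\theta\|\,\|\theta\|$. Hence the whole remainder is bounded by $C\|\Delta\theta\|\,\|\theta\|\le \frac{\zeta\underline{\kappa}}{4}\|\Delta\theta\|^2+C\|\theta\|^2$: the first piece is swallowed by the $\Delta\theta$-dissipation you have already generated, and the surviving $C\|\theta\|^2$ is exactly the term your $\omega$-weighted identity $\omega\frac{d}{dt}\|\theta\|^2\le-2\omega\underline{\kappa}\|\nabla\theta\|^2$ is designed to dominate via Poincar\'e. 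With that observation inserted, every term on the right-hand side is either absorbed into the dissipation or cancelled, no positive constant survives, and your choice of $\zeta$ and $\omega$ closes the argument exactly as in the paper.
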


\begin{proof}
Let $\zeta>0$ be a constant to be determined
later (see \eqref{zeta} below). Multiplying equation \eqref{4} with
$-\zeta\Delta\theta$, integrating over $\Omega$ and adding the resultant with \eqref{up1}, we have
  \bea
&&\frac12\frac{d}{dt}\Big(\|\u\|^2+a\lambda_0\|\nabla\phi\|^2+2a\lambda_0\int_{\Omega}F(\phi)dx+\zeta\|\nabla\theta\|^2
 \Big)\non\\
 &&\ +2\int_\Omega \mu(\theta) |\mathcal{D}{\u}|^2dx +a\gamma\lambda_0\|\Delta\phi-W'(\phi)\|^2+\zeta\int_\Omega \kappa(\theta)|\Delta\theta|^2dx \non\\
&=&J_1+J_2+J_3+\zeta\int_{\Omega}(\u\cdot\nabla)\theta\Delta\theta \,dx -\zeta\int_\Omega \kappa'(\theta) |\nabla \theta|^2\Delta \theta dx,\non
 \eea
 where $J_1$, $J_2$ and $J_3$ are the same as in \eqref{up1} and we denote the last two terms on the right-hand side of the above equality by $J_4, J_5$, respectively.

 By Lemmas \ref{mphi}, \ref{mtheta}  and the assumption $\|\theta_0\|_{L^\infty}\leq  \Theta_2\in (0,\Theta_1]$ (see \eqref{2.7}), the terms in the second last line of \eqref{J11J33} can be re-estimated as follows
 \be
 \frac{4c_1^2c_2^2|b|^2\lambda_0^2}{\underline{\mu}}\|\theta\|_{L^\infty}^2\|\phi\|_{L^\infty}^2\|\Delta\phi-W'(\phi)\|^2
 \leq \frac{a\lambda_0\gamma}{2}\|\Delta\phi-W'(\phi)\|^2,\non
 \ee
 and by Agmon's inequality ($n=2$), it holds
 \bea
 &&\frac{4c_1^2c_2^2|b|^2\lambda_0^2}{\underline{\mu}}\|\theta\|_{L^\infty}^2\|\phi\|_{L^\infty}^2\Big(\|W'(\phi)\|^2+\|\phi\|^2+\|\phi_b\|_{H^\frac32(\Gamma)}^2\Big)\non\\
 &\leq& C\|\Delta \theta\|\|\theta\|\non\\
 &\leq& \frac{\zeta\underline{\kappa}}{4}\|\Delta \theta\|^2  +C\|\theta\|^2.\non
 \eea
 As a result, we have
 \be
 J_1+J_3\leq \frac{\underline{\mu}}{4}\|\nabla
\u\|^2+\frac{a\lambda_0\gamma}{2}\|\Delta\phi-W'(\phi)\|^2+\frac{\zeta\underline{\kappa}}{4}\|\Delta \theta\|^2+C\|\theta\|^2.\non
 \ee
 Again, the term $J_2$ can be estimated in the same way as \eqref{J2}.
 Next, for $J_4$ and $J_5$,  using the assumption $\|\theta_0\|_{L^\infty}\leq  \Theta_2$, Lemma \ref{mtheta} and  the Gagliardo--Nirenberg inequality \eqref{GNbb}, we deduce that
 \bea
J_4&=&-\zeta\int_{\Omega}\u
\cdot\nabla\left(\frac{|\nabla\theta|^2}{2}\right)dx
+\zeta\int_{\Omega} \u\cdot [\nabla \cdot(\nabla\theta\otimes \nabla\theta)]\,dx\non\\
&=& -\zeta\int_{\Omega} \nabla \u: (\nabla\theta\otimes \nabla\theta)\,dx\non\\
&\leq& \zeta\|\nabla \u\|\|\nabla\theta\|_{\mathbf{L}^4}^2 \non\\
&\leq& c_3\zeta\|\nabla \u\|\|\Delta\theta\|\|\theta\|_{L^\infty}\non\\
&\leq& \frac{\zeta\underline{\kappa}}{4}\|\Delta\theta\|^2+\frac{\zeta c_3^2}{\underline{\kappa}}\|\theta_0\|_{L^\infty}^2\|\nabla \u\|^2\non\\
&\leq&\frac{\zeta\underline{\kappa}}{4}\|\Delta\theta\|^2+\frac{\zeta c_3^2\Theta_2^2}{\underline{\kappa}}\|\nabla\u\|^2,\non
 \eea
and
 \bea
 J_5&\leq& \zeta\|\kappa'(\theta)\|_{L^\infty}\|\nabla \theta\|_{\mathbf{L}^4}^2\|\Delta \theta\|\non\\
 &\leq&
 \zeta c_3 \|\kappa'(\theta)\|_{L^\infty} \|\theta \|_{L^\infty}\|\Delta \theta \|^2 \non\\
 &\leq& \frac{\zeta\underline{\kappa}}{4}\|\Delta\theta\|^2.\non
 \eea
 Now taking
 \be
 \zeta=\frac{\underline{\mu}\underline{\kappa}}{4c_3^2\Theta_2^2}>0,\label{zeta}
 \ee
 we infer from the above estimates that \bea
&&\frac{d}{dt}\left(\|\u\|^2+a\lambda_0\|\nabla\phi\|^2+2a\lambda_0\int_{\Omega}F(\phi)dx+\zeta\|\nabla\theta\|^2
 \right)\non\\
 &&\ \ \ + \frac{\underline{\mu}}{2}\|\nabla \u\|^2
 + a\lambda_0\gamma\|\Delta\phi-F'(\phi)\|^2+\frac{\zeta\underline{\kappa}}{2}\|\Delta\theta\|^2\non\\
 & \leq& C_1 \|\theta\|^2.\label{part1bel}
\eea

  Next, multiplying the equation \eqref{4} by $2\omega\theta$, with
$\omega=\frac{C_1}{2\underline{\kappa}}>0$, integrating over $\Omega$, and using
Poincar\'e's inequality for $\theta$, we obtain
 \be
\omega\frac{d}{dt}\|\theta\|^2 = -2\omega \int_\Omega \kappa(\theta)|\nabla\theta|^2dx \leq
-2\omega \underline{\kappa}\|\nabla \theta\|^2=-C_1\|\nabla \theta\|^2.
 \label{part2bel}
 \ee
Adding \eqref{part1bel} with \eqref{part2bel}, we arrive at our conclusion \eqref{bela}. The proof is complete.
 \end{proof}

As a direct consequence of the above lemma, we can derive the following \emph{uniform-in-time} estimates for $(\u, \phi, \theta)$:

\begin{proposition}\label{low-estimate}
Let $n=2$. Under the assumptions of Lemma \ref{BEL2}, the smooth solution $(\u, \phi, \theta)$ to
problem \eqref{1}--\eqref{ini} satisfies the following energy inequality
 \be
\mathcal{E}(t)+ \int_0^t \left(\frac{\underline{\mu}}{2}\|\nabla
\u\|^2+a\lambda_0\gamma\|\Delta\phi-W'(\phi)\|^2+\frac{\zeta\underline{\kappa}}{2}\|\Delta\theta\|^2\right)
dt\leq \mathcal{E}(0),\quad \forall\, t\geq 0,\non
 \ee
which yields that
 \bea
 && \|\u(t)\|^2+\|\phi(t)\|_{H^1}^2+\|\theta(t)\|_{H^1}^2\leq C, \quad \forall\, t\geq 0,\label{low1} \\
 && \int_{0}^{+\infty}\big(\|\nabla \u(t)\|^2+\|\Delta\phi(t)-W'(\phi(t))\|^2+\|\Delta\theta(t)\|^2 \big)dt \leq C,\label{low2}
 \eea
 where $C > 0$ is a constant depending on $\|\u_0\|, \|\phi_0\|_{H^1\cap L^\infty}, \|\theta_0\|_{H^1}$, $\Theta_2$, $\Omega$ and coefficients of the
 system, but not on the time $t$.
\end{proposition}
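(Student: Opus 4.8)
The plan is to derive Proposition~\ref{low-estimate} as a direct consequence of the differential energy inequality \eqref{bela} established in Lemma~\ref{BEL2}. First I would integrate \eqref{bela} over the interval $[0,t]$ for an arbitrary $t\geq 0$. Since the left-hand side of \eqref{bela} is precisely $\frac{d}{dt}\mathcal{E}(t)$ plus three nonnegative dissipation terms, this integration yields immediately the asserted energy inequality
\[
\mathcal{E}(t)+ \int_0^t \Big(\tfrac{\underline{\mu}}{2}\|\nabla\u\|^2+a\lambda_0\gamma\|\Delta\phi-W'(\phi)\|^2+\tfrac{\zeta\underline{\kappa}}{2}\|\Delta\theta\|^2\Big)\,d\tau \leq \mathcal{E}(0).
\]
In particular, discarding the nonnegative time-integral gives the pointwise bound $\mathcal{E}(t)\leq\mathcal{E}(0)$ for all $t\geq 0$.

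Next I would extract the uniform-in-time bounds \eqref{low1} from $\mathcal{E}(t)\leq \mathcal{E}(0)$. Recalling the definition \eqref{toenergy} of $\mathcal{E}$ and noting that the coefficients $a,\lambda_0,\zeta,\omega$ are all strictly positive and that $W(\phi)\geq 0$ by \eqref{W}, every term in $\mathcal{E}(t)$ is nonnegative, so that $\|\u(t)\|$, $\|\nabla\phi(t)\|$, $\|\nabla\theta(t)\|$ and $\|\theta(t)\|$ are each controlled by $\mathcal{E}(0)^{1/2}$. This already delivers $\|\u(t)\|_{\mathbf{H}}$ and, since $\mathcal{E}$ contains both the $\zeta\|\nabla\theta\|^2$ and $\omega\|\theta\|^2$ terms, the full $\|\theta(t)\|_{H^1}$. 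The functional $\mathcal{E}$ however only sees $\nabla\phi$, so to complete $\|\phi(t)\|_{H^1}$ I would invoke the maximum principle Lemma~\ref{mphi}: under $|\phi_0|\leq 1$, $|\phi_b|\leq 1$ one has $|\phi(t,x)|\leq 1$ a.e., whence $\|\phi(t)\|_{L^2}\leq |\Omega|^{1/2}$; combining this with the gradient bound yields the claimed control of $\|\phi(t)\|_{H^1}$.

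For the global-in-time dissipation estimate \eqref{low2} I would simply drop the nonnegative term $\mathcal{E}(t)$ on the left of the integrated inequality and let $t\to+\infty$, obtaining $\int_0^{+\infty}\big(\|\nabla\u\|^2+\|\Delta\phi-W'(\phi)\|^2+\|\Delta\theta\|^2\big)\,d\tau\leq C\,\mathcal{E}(0)$, where $C$ absorbs the positive constants $\underline{\mu}$, $a\lambda_0\gamma$ and $\zeta\underline{\kappa}$ that depend only on $\Omega$, $\Theta_2$ and the coefficients of the system. To see that the right-hand side $\mathcal{E}(0)$ is controlled solely by the data listed in the statement, I would bound each term of \eqref{toenergy} at $t=0$: the terms $\|\u_0\|^2$, $a\lambda_0\|\nabla\phi_0\|^2$, $\zeta\|\nabla\theta_0\|^2$ and $\omega\|\theta_0\|^2$ are dominated by $\|\u_0\|^2$, $\|\phi_0\|_{H^1}^2$ and $\|\theta_0\|_{H^1}^2$, while $2a\lambda_0\int_\Omega W(\phi_0)\,dx\leq \frac{a\lambda_0}{2\varepsilon^2}|\Omega|$ is finite because $|\phi_0|\leq 1$ forces $W(\phi_0)\leq \frac{1}{4\varepsilon^2}$. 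Hence $\mathcal{E}(0)\leq C(\|\u_0\|,\|\phi_0\|_{H^1\cap L^\infty},\|\theta_0\|_{H^1})$, independent of $t$, which completes the argument.

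I do not expect any genuine obstacle here, since the whole proof is a direct time integration of \eqref{bela} with no Gronwall argument needed. The only points requiring a little care are that the $L^2$-control of $\phi$ must be supplied from the maximum principle Lemma~\ref{mphi} rather than from $\mathcal{E}$ (which only provides $\|\nabla\phi\|$), and that the finiteness of the bulk energy $\int_\Omega W(\phi_0)\,dx$ at the initial time must be checked using $|\phi_0|\leq 1$.
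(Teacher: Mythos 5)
Your proposal is correct and follows exactly the route the paper intends: the paper states Proposition \ref{low-estimate} as a direct consequence of Lemma \ref{BEL2}, obtained by integrating \eqref{bela} in time, and your supplementary observations (using Lemma \ref{mphi} to supply the $L^2$-bound on $\phi$ and $|\phi_0|\leq 1$ to bound $\int_\Omega W(\phi_0)\,dx$ in $\mathcal{E}(0)$) are precisely the small points the paper leaves implicit.
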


 \subsubsection{Proof of Theorems \ref{weake1}}
 Using the global \textit{a priori} estimates obtained in Proposition \ref{BEL1} if $\|\theta_0\|_{L^\infty}\leq \Theta_1$ (or Proposition \ref{low-estimate} if $\|\theta_0\|_{L^\infty}\leq \Theta_2$) and the Sobolev embedding theorem ($n=2$), we can see that for any $\mathbf{v}\in \mathbf{V}$, it holds
 \bea
  |\langle \u_t, \mathbf{v}\rangle_{\mathbf{V}', \mathbf{V}}|&\leq&
  |\int_\Omega (\u\cdot\nabla)\u\cdot \mathbf{v} dx| + 2|\int_\Omega \mu(\theta) \mathcal{D} {\u} : \mathcal{D}{\mathbf{v}} dx| \non\\
   &&\quad + |\int_\Omega [\lambda(\theta)\nabla\phi\otimes\nabla\phi] : \nabla \mathbf{v} dx|+ |\int_\Omega  \theta \mathbf{g} \cdot \mathbf{v} dx|\non\\
   &\leq &  |\int_\Omega (\u\otimes \u):\nabla \mathbf{v} dx| + 2\|\mu(\theta)\|_{L^\infty}\|\mathcal{D} \u\|\|\mathcal{D} \mathbf{v}\|\non\\
      &&\quad + \|\lambda(\theta)\|_{L^\infty}\|\nabla \phi\|_{\mathbf{L}^4}^2\|\nabla\mathbf{v}\|+C\|\theta\|\|\mathbf{v}\|\non\\
   &\leq & C\|\u\|_{\mathbf{L}^4}^2\|\nabla \mathbf{v}\|+ C\|\nabla \u\|\|\nabla \mathbf{v}\|+C\|\nabla \phi\|_{\mathbf{L}^4}^2\|\nabla\mathbf{v}\|+C\|\theta\|\|\nabla \mathbf{v}\|   \non\\
   &\leq & C(\|\u\|\|\nabla \u\|+\|\nabla \u\|+\|\phi\|_{H^2}\|\phi\|_{H^1}+\|\theta\|)\|\nabla \mathbf{v}\|,\non
 \eea
 which implies that $\u_t\in L^2(0,T; \mathbf{V}')$. On the other hand, it is also easy to check that $\phi_t, \theta_t\in L^2(0,T;L^2(\Omega))$. Then the existence of global weak solutions to problem \eqref{1}--\eqref{ini} in $2D$ can be proved by working with a suitable semi-Galerkin approximate scheme (for instance, Type B in Appendix) and the standard compactness argument. The details are omitted here.

\begin{remark}
Different from the classical two-dimensional Navier--Stokes equations, uniqueness of global weak solutions to problem \eqref{1}--\eqref{ini} is an open issue in the $2D$ case. One of the technical difficulties comes from the estimate on the higher-order nonlinear term
$$-2\int_\Omega [\mu(\theta_1)-\mu(\theta_2)]\, \mathcal{D} \u_2 : \mathcal{D}  (\u_1-\u_2) dx,$$
 which is induced by the non-constant temperature-dependent viscosity $\mu(\theta)$. On the other hand, a weak-strong uniqueness result would be possible and we leave the details to interested readers.
\end{remark}


\subsection{Global strong solutions}

In what follows, we shall prove the existence and uniqueness of global strong solutions to problem \eqref{1}--\eqref{ini} in $2D$,
under the same assumptions on $\|\theta_0\|_{L^\infty}$ as for the case of global weak solutions. Since existence and uniqueness of local strong solutions in $2D$  have been proved in Section 3, below we only need to derive necessary global-in-time estimates for the strong solutions and then apply the extension theorem.

 The following lemma for the Stokes problem with variable viscosity coefficient will be helpful to prove higher-order spatial estimates for the velocity field $\u$. It can be obtained by the localization and freezing coefficients method (see e.g., \cite[Lemma 2.1]{SZ13} for a slightly different version).
\begin{lemma}\label{stokes}
For $n\geq 2$, consider the Stokes problem
\be
\begin{cases}
& -\nabla \cdot(2 \mu(x)\mathcal{D} \u) +\nabla P=\mathbf{f},\quad\ \ x\in \Omega, \\
& \nabla \cdot \u=0,\qquad\qquad\qquad\qquad\quad\, x\in \Omega,\\
& \int_\Omega P dx =0, \\
& \u=\mathbf{0}, \qquad \qquad\qquad\qquad \qquad\ \ x\in \Gamma,
\end{cases}\label{sto}
\ee
where the coefficient $\mu(x)\in H^2(\Omega)$ and satisfies
$
0<\underline{\mu}\leq \mu(x)\leq \overline{\mu} <+\infty.
$
If  $\mathbf{f}\in \mathbf{V}'$, then problem \eqref{sto} admits a unique weak solution $(\u, P)\in \mathbf{V}\times L^2(\Omega)$ such that
\be
 \|\u\|_{\mathbf{H}^1}+\|P\|\leq C\|\mathbf{f}\|_{\mathbf{H}^{-1}},
 \label{PL2}
 \ee
where $C=C(\Omega, n, \underline{\mu}, \overline{\mu})$ is a positive constant. Moreover, if $n=2$ and $\mathbf{f}\in \mathbf{L}^2(\Omega)$, then $(\u, P)\in (\mathbf{H}^2(\Omega)\cap \mathbf{V})\times H^1(\Omega)$ and there exists a constant $C=C(\Omega, [\mu]_{\delta}, \underline{\mu}, \overline{\mu})$ such that the following estimate holds:
\be
\|\Delta \u\|+\|\nabla P\|\leq C\Big(\|\mathbf{f}\|+\|\mu\|_{H^1}\|\mu\|_{H^2}\|\nabla \u\|+\|P\|\Big), \label{stoe}
\ee
 for some $\delta \in (0,1)$.
\end{lemma}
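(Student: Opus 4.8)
The plan is to treat the two assertions separately: the weak solvability and bound \eqref{PL2} follow from a standard variational argument, while the $\mathbf{H}^2$-regularity \eqref{stoe} requires the localization and freezing-coefficient technique.

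For the first part I would apply the Lax--Milgram theorem to the bilinear form $a(\u,\v)=2\int_\Omega \mu(x)\,\mathcal{D}\u:\mathcal{D}\v\,dx$ on $\mathbf{V}$. Boundedness is immediate from $0<\underline{\mu}\leq \mu\leq \overline{\mu}$, and coercivity follows from the identity $2\|\mathcal{D}\u\|^2=\|\nabla\u\|^2$, valid for every $\u\in\mathbf{V}$ (integrate by parts twice using $\nabla\cdot\u=0$ and $\u|_\Gamma=\mathbf{0}$), which gives $a(\u,\u)\geq \underline{\mu}\|\nabla\u\|^2$. This yields a unique $\u\in\mathbf{V}$ with $\|\u\|_{\mathbf{H}^1}\leq C\underline{\mu}^{-1}\|\mathbf{f}\|_{\mathbf{H}^{-1}}$. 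To recover the pressure, I note that the functional $\v\mapsto \langle\mathbf{f},\v\rangle-a(\u,\v)$ annihilates $\mathbf{V}$, so by de Rham's theorem it equals $\nabla P$ for some $P\in L^2(\Omega)$ of zero mean; the bound $\|P\|\leq C(\|\mathbf{f}\|_{\mathbf{H}^{-1}}+\|\u\|_{\mathbf{H}^1})$ then comes from the inf--sup (LBB) condition for the divergence on $\Omega$, with constant depending only on $\Omega,n,\underline{\mu},\overline{\mu}$.

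For the $\mathbf{H}^2$-regularity in $2D$, I would first rewrite the equation with $\nabla\cdot\u=0$, using $\nabla\cdot(2\mu\mathcal{D}\u)=\mu\Delta\u+2\mathcal{D}\u\cdot\nabla\mu$, so that the system reads $-\mu\Delta\u+\nabla P=\mathbf{f}+2\mathcal{D}\u\cdot\nabla\mu$. I then cover $\overline{\Omega}$ by finitely many balls of a small radius $r$, take a subordinate partition of unity, and in each patch freeze the coefficient at the center $x_0$, writing $-\mu(x_0)\Delta\u+\nabla P=\mathbf{f}+2\mathcal{D}\u\cdot\nabla\mu+(\mu-\mu(x_0))\Delta\u$. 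Multiplying by a cutoff and invoking the classical constant-coefficient Stokes regularity (Cattabriga/ADN, interior estimates for interior patches and half-space estimates for no-slip boundary patches) gives local control of $\|\Delta(\zeta\u)\|$ and $\|\nabla(\zeta P)\|$. The decisive point is that $\|(\mu-\mu(x_0))\Delta\u\|_{L^2(B_r)}\leq [\mu]_\delta r^\delta\|\Delta\u\|_{L^2(B_r)}$, so choosing $r$ small makes the frozen-difference term a small multiple of $\|\Delta\u\|$ to be absorbed on the left after summing over the finite cover. The first-order term is handled by Gagliardo--Nirenberg in $2D$: $\|2\mathcal{D}\u\cdot\nabla\mu\|\leq C\|\nabla\mu\|_{\mathbf{L}^4}\|\nabla\u\|_{\mathbf{L}^4}\leq C\|\nabla\mu\|_{\mathbf{L}^4}\|\nabla\u\|^{1/2}\|\Delta\u\|^{1/2}$, and since $\|\nabla\mu\|_{\mathbf{L}^4}^2\leq C\|\mu\|_{H^1}\|\mu\|_{H^2}$ in $2D$, Young's inequality produces exactly $\epsilon\|\Delta\u\|+C_\epsilon\|\mu\|_{H^1}\|\mu\|_{H^2}\|\nabla\u\|$, which accounts for the middle term in \eqref{stoe}. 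The cutoff commutators are of lower order, involving only $\|\nabla\u\|$ and $\|P\|$, and are collected on the right-hand side (the latter giving the $\|P\|$ term).

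The main obstacle, where care is required, is the boundary analysis: flattening $\Gamma$ and checking that the constant-coefficient no-slip Stokes estimate applies uniformly across the boundary patches, while tracking every commutator so that only $\|\mathbf{f}\|$, $\|\mu\|_{H^1}\|\mu\|_{H^2}\|\nabla\u\|$ and $\|P\|$ survive on the right. One must also fix $\epsilon$ and the radius $r$ consistently—this is where the dependence of $C$ on $[\mu]_\delta$, $\underline{\mu}$, $\overline{\mu}$ enters—so that the absorption of both $\epsilon\|\Delta\u\|$ and the frozen-difference terms remains legitimate after the finite summation.
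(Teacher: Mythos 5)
Your proposal is correct and follows essentially the route the paper itself indicates: the paper gives no detailed proof of this lemma, stating only that it "can be obtained by the localization and freezing coefficients method" (citing \cite[Lemma 2.1]{SZ13}), and your argument is precisely that — Lax--Milgram with the identity $2\|\mathcal{D}\u\|^2=\|\nabla\u\|^2$ plus de Rham for the weak part, then freezing coefficients with the H\"older modulus $[\mu]_\delta$ absorbing the frozen-difference term and the $2D$ Gagliardo--Nirenberg estimate producing the $\|\mu\|_{H^1}\|\mu\|_{H^2}\|\nabla\u\|$ term. The only detail worth noting is that the absorption of $\epsilon\|\Delta\u\|$ presupposes $\u\in\mathbf{H}^2$ a priori, which should be justified by a standard approximation (e.g., mollified $\mu$) before passing to the limit.
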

Besides, the following H\"older estimate for $\theta$ (see \cite[Lemma 3.2]{SZ13}) will be necessary in order to make use of Lemma \ref{stokes}:
\begin{lemma} \label{mthetaH}
Suppose that $n=2$ and $\u\in L^4(0, T; \mathbf{L}^4(\Omega))$. Consider the initial boundary value problem \eqref{eqthe}.  If $\theta\in L^\infty(0,T;  L^2(\Omega))\cap
L^2(0, T; H^1_0(\Omega))$ is a weak solution of \eqref{eqthe} and $\theta_0\in C^\alpha(\overline{\Omega})$ for some $\alpha\in (0, 1)$, then there exist two positive constants $C$ and $\delta\in (0,\alpha]$ depending only on $\Omega, \underline{\kappa}, \|\theta_0\|_{L^\infty}, [\theta_0]_\alpha$ and $\|\u\|_{L^4(0,T;\mathbf{L}^4)}$ such that
 $$[\theta]_{C^{\frac{\delta}{2}, \delta}}\leq C,$$
  where the H\"older semi-norm of $\theta$ is given by
 $[\theta]_{C^{\frac{\delta}{2}, \delta}}=\sup_{(t,x)\neq (\tau, y)\in (0,T)\times \Omega}\frac{|\theta(t,x)-\theta(\tau, y)|}{(|t-\tau|^\frac12+|x-y|)^\delta}$.
\end{lemma}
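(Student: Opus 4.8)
The plan is to reduce \eqref{eqthe} to a uniformly parabolic divergence-form equation with bounded measurable coefficients and a scaling-critical source term, and then to run the parabolic De Giorgi--Nash--Moser machinery. First I would invoke the weak maximum principle (Lemma \ref{mtheta}): since $\theta_0\in C^\alpha(\overline{\Omega})\subset L^\infty(\Omega)$ and the convection term is annihilated by the incompressibility $\nabla\cdot\u=0$ upon testing with truncations, we obtain $\|\theta(t)\|_{L^\infty}\le\|\theta_0\|_{L^\infty}$ on $[0,T]$. Consequently $\underline{\kappa}\le\kappa(\theta)\le\overline{\kappa}$, so that $a(t,x):=\kappa(\theta(t,x))$ is a uniformly elliptic bounded measurable coefficient. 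Writing the convection in divergence form, $\u\cdot\nabla\theta=\nabla\cdot(\u\theta)$, the function $\theta$ solves
\be
\theta_t-\nabla\cdot(a\nabla\theta)=-\nabla\cdot(\u\theta)\quad\text{in }Q_T:=(0,T)\times\Omega,\non
\ee
with homogeneous lateral Dirichlet data and initial datum $\theta_0$. Because $\theta\in L^\infty(Q_T)$ and $\u\in L^4(0,T;\mathbf{L}^4(\Omega))$, the source field $\mathbf{F}:=-\u\theta$ lies in $\mathbf{L}^4(Q_T)$, which is exactly the scaling-critical class for this operator when $n=2$ (parabolic dimension $n+2=4$).

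The heart of the argument is an oscillation-decay estimate obtained by De Giorgi iteration. For the truncations $(\theta-k)_\pm$ I would derive Caccioppoli energy inequalities on parabolic cylinders $Q_r$ by testing the weak formulation with $(\theta-k)_\pm\eta^2$ for suitable cut-offs $\eta$, and then combine them with the $2D$ parabolic Sobolev embedding $L^\infty_tL^2_x\cap L^2_tH^1_x\hookrightarrow L^4(Q_r)$ to obtain a nonlinear recursion for the measures of the level sets. The contribution of $\nabla\cdot\mathbf{F}$ reduces, after integration by parts and Young's inequality, to controlling $\int_{Q_r}|\u|^2(\theta-k)_\pm^2\eta^2\le\|\u\|_{\mathbf{L}^4(Q_r)}^2\,\|(\theta-k)_\pm\eta\|_{L^4(Q_r)}^2$. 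This is where the critical exponent is rescued: although $\|(\theta-k)_\pm\eta\|_{L^4(Q_r)}$ is controlled only by the full parabolic energy (with no integrability gain), the prefactor $\|\u\|_{\mathbf{L}^4(Q_r)}^2$ tends to $0$ as the cylinder shrinks, by absolute continuity of the Lebesgue integral. That smallness lets the De Giorgi iteration close; after the standard measure-theoretic (expansion-of-positivity) step it yields a geometric decay $\mathrm{osc}_{Q_{\tau r}}\theta\le(1-\eta_0)\,\mathrm{osc}_{Q_r}\theta$ for some $\tau,\eta_0\in(0,1)$, hence a Hölder modulus with an exponent $\delta\in(0,\alpha]$ depending on $\underline{\kappa},\overline{\kappa}$ and on the rate at which $\|\u\|_{\mathbf{L}^4(Q_r)}\to0$, i.e.\ on $\|\u\|_{L^4(0,T;\mathbf{L}^4)}$.

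It then remains to propagate the oscillation decay up to the lateral boundary and the initial time. Near $\Gamma$, since $\theta|_\Gamma=0$ is constant, I would flatten the boundary (using smoothness of $\Gamma$) and run the same iteration on half-cylinders, the homogeneous Dirichlet condition supplying the missing measure decay essentially for free; this gives boundary Hölder continuity with the same $\delta$. Near $t=0$, the hypothesis $\theta_0\in C^\alpha(\overline{\Omega})$ furnishes a Hölder modulus for the initial trace, and a comparison/barrier argument bounds the oscillation on cylinders touching $\{t=0\}$ in terms of $[\theta_0]_\alpha$ and the interior decay (reducing $\delta$ if needed so that $\delta\le\alpha$). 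Patching the interior, lateral, and initial estimates by a standard covering yields $[\theta]_{C^{\delta/2,\delta}}\le C$, with $C,\delta$ depending only on $\Omega,\underline{\kappa},\|\theta_0\|_{L^\infty},[\theta_0]_\alpha$ and $\|\u\|_{L^4(0,T;\mathbf{L}^4)}$.

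The step I expect to be the main obstacle is precisely the borderline integrability: in $2D$ the drift/source $\u\theta\in\mathbf{L}^4(Q_T)$ sits at the critical exponent, so the De Giorgi iteration does not close by a fixed-constant argument. The resolution is to exploit the smallness of $\|\u\|_{\mathbf{L}^4}$ over small parabolic cylinders rather than any sub-criticality, which is why the resulting exponent $\delta$ is not explicit and the constant $C$ must be allowed to depend on the full norm $\|\u\|_{L^4(0,T;\mathbf{L}^4)}$ (and not merely on its size through scaling). This is also the reason one cannot simply quote a textbook De Giorgi theorem with $\mathbf{F}\in L^q$, $q>n+2$, and must instead track the absolute-continuity mechanism carefully throughout the iteration.
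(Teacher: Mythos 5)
The paper does not prove this lemma at all: it is quoted verbatim from \cite[Lemma 3.2]{SZ13}, so there is no in-paper argument to compare against. Your sketch supplies the missing proof along the standard lines that Sun--Zhang themselves rely on (the Ladyzhenskaya--Solonnikov--Uraltseva class $\mathcal{B}$ / De Giorgi machinery): maximum principle $\Rightarrow$ $\|\theta\|_{L^\infty}\le\|\theta_0\|_{L^\infty}$ $\Rightarrow$ $\underline{\kappa}\le\kappa(\theta)\le\overline{\kappa}$, so the equation is uniformly parabolic in divergence form with bounded measurable principal coefficient; then Caccioppoli estimates on parabolic cylinders, oscillation decay, and separate lateral-boundary and initial-layer estimates patched together. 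All of that is sound, and you correctly identify that $\u\in L^4(0,T;\mathbf{L}^4)$ sits exactly at the parabolic-critical exponent $\frac{n}{q}+\frac{2}{r}=1$ when $n=2$, which is the only delicate point.

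The one place where your argument does not deliver the statement as written is the mechanism you choose to close the iteration at that critical exponent. Relying on absolute continuity of $\int|\u|^4$ over shrinking cylinders does prove H\"older continuity, but the radius below which $\|\u\|_{\mathbf{L}^4(Q_r)}$ is small depends on the concentration profile of $\u$, not only on $\|\u\|_{L^4(0,T;\mathbf{L}^4)}$; two velocity fields with equal norms can have arbitrarily different such radii. Hence your constants $C,\delta$ depend on more than the quantities listed in the lemma, which matters here because the estimate must be uniform over solutions with a given bound on $\|\u\|_{L^4(0,T;\mathbf{L}^4)}$ (that is exactly how it enters Proposition \ref{high2D} via Lemma \ref{stokes}). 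The fix is to use the divergence-free structure \emph{before} Young's inequality: testing with $(\theta-k)_\pm\eta^2$ and writing
\be
\int_{Q_r}(\u\cdot\nabla\theta)(\theta-k)_\pm\eta^2\,dx\,dt=-\int_{Q_r}(\theta-k)_\pm^2\,\u\cdot\nabla\eta\;\eta\,dx\,dt,\non
\ee
one bounds this by $\|\u\|_{\mathbf{L}^4(Q_r)}\|(\theta-k)_\pm\eta\|_{L^4(Q_r)}\|(\theta-k)_\pm\nabla\eta\|_{L^2(Q_r)}$, and a Young splitting with a free parameter absorbs the $L^4$ factor into the left-hand side via the embedding $L^\infty_tL^2_x\cap L^2_tH^1_x\hookrightarrow L^4(Q_r)$, leaving only a multiple of the standard term $\|(\theta-k)_\pm\nabla\eta\|_{L^2}^2$ with a constant depending solely on $\|\u\|_{L^4(0,T;\mathbf{L}^4)}$ and $\underline{\kappa}$. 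Your formulation $\mathbf{F}=-\u\theta$, which treats the convection as a generic critical $L^4$ source and forces the term $\|\u\|_{\mathbf{L}^4(Q_r)}^2\|(\theta-k)_\pm\eta\|_{L^4(Q_r)}^2$ against the full energy, discards exactly the structure that makes the norm-only dependence attainable. With that modification the rest of your outline (boundary flattening with zero Dirichlet data, barrier at $t=0$ using $[\theta_0]_\alpha$, covering) goes through and yields the lemma with the stated dependence of the constants.
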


\subsubsection{\emph{A priori} estimates}
First, we derive some useful higher-order differential inequalities:

\begin{lemma}\label{high2d}
Suppose that $n=2$, $|\phi_0|\leq 1$ in $\Omega$, $|\phi_b|\leq 1$ on $\Gamma$ and $\theta_0$ satisfies either $\|\theta_0\|_{L^\infty}\leq \Theta_1$  or  $\|\theta_0\|_{L^\infty}\leq \Theta_2$. Let $(\u, \phi, \theta)$ be a smooth solution to problem \eqref{1}--\eqref{ini}.
 Then we have the following differential inequalities:
 \bea
 &&\frac{d}{dt}\Big(\int_\Omega \mu(\theta)|\mathcal{D}\u|^2dx+\frac{1}{2}\|\Delta\phi-W'(\phi)\|^2\Big)+(1-3\epsilon)\|
\u_t\|^2\non\\
&& \quad +(\gamma-4\epsilon)\|\nabla(\Delta\phi-W'(\phi))\|^2  \non\\
&\leq& \left(\epsilon+C_3\epsilon^{-2}\|\nabla \theta\|^4\|\nabla\phi\|^8 \right)\|\theta\|_{H^3}^2 + 4\epsilon \|\Delta \u\|^2\non\\
&&\quad +C(\|\u\|^2+\|\nabla \phi\|^2+1)(\|\nabla \u\|^4+\|\Delta\phi-W'(\phi)\|^4+\|\theta_t\|^4)\non\\
&&\quad +C(\|\nabla \theta\|^{8}+\|\nabla \phi\|^{16}+1),\label{uphiH1}
  \eea
  and
  \bea
&& \frac12\frac{d}{dt}\|\theta_t\|^2 + (\underline{\kappa}- 3\epsilon )\|\nabla \theta_t\|^2
\leq \epsilon^3 \|\u_t\|^2+ C (\|\theta_t\|^4+\|\Delta \theta\|^4),\label{thetL2}
\eea
  where $\epsilon\in(0,1)$ is an arbitrary small constant; the constant $C_3$ may depend on $\Omega$, $\lambda_0$, $b$ and $\|\theta_0\|_{L^\infty}$, but not on $\epsilon$ and $t$; the constant $C$ may depend on $\Omega$, $\|\phi_0\|_{L^\infty}$, $\|\phi_b\|_{H^\frac52(\Gamma)}$, $\|\theta_0\|_{L^\infty}$, $\epsilon$ and coefficients of the system, but not on $t$.
\end{lemma}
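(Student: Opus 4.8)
The plan is to prove the two inequalities by exactly the testing strategy of Lemma \ref{high3d} (Steps~2, 3 and 5), now specialized to $n=2$, where the sharper Ladyzhenskaya inequality $\|f\|_{\mathbf{L}^4}\leq c\|f\|^{1/2}\|\nabla f\|^{1/2}$, Agmon's inequality (Lemma \ref{Agmon}) and the elliptic/interpolation bounds \eqref{GNaa}--\eqref{GNbb} are available. Throughout I would freely use the maximum principles (Lemmas \ref{mphi}, \ref{mtheta}), which give $|\phi|\leq 1$ and $\|\theta\|_{L^\infty}\leq\|\theta_0\|_{L^\infty}\leq\Theta_1$, together with the global lower-order bounds of Proposition \ref{BEL1} (or Proposition \ref{low-estimate}), in particular $\|\nabla\theta\|\leq C$ and $\|\phi\|_{H^1}\leq C$. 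These uniform bounds are precisely what let me replace intermediate quantities by constants and keep the right-hand sides in the stated form.

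For \eqref{uphiH1} I would test the momentum equation \eqref{1} with $\u_t$, recovering \eqref{tA-part1}, namely $\frac{d}{dt}\int_\Omega\mu(\theta)|\mathcal{D}\u|^2\,dx+\|\u_t\|^2=K_1+K_2+K_3+K_4$, and repeat the computation of Step~3 of Lemma \ref{high3d} for the phase field — legitimate since $\Delta\phi-W'(\phi)|_\Gamma=0$ by \eqref{bc1} — recovering \eqref{tA-part2}, namely $\tfrac12\frac{d}{dt}\|\Delta\phi-W'(\phi)\|^2+\gamma\|\nabla(\Delta\phi-W'(\phi))\|^2=K_5+K_6+K_7$; then I add the two identities. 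The inertial term $K_1$, the viscosity-variation term $K_2$ and the coupling terms $K_6$, $K_7$ are all controlled by Ladyzhenskaya plus Young: $K_1,K_2$ produce $4\epsilon\|\Delta\u\|^2$ together with contributions of the form $C\|\u\|^2\|\nabla\u\|^4$ and $C\|\theta_t\|^2\|\nabla\u\|^2$, while $K_6,K_7$ produce the dissipative remainders $\epsilon\|\nabla(\Delta\phi-W'(\phi))\|^2$ and $\epsilon\|\Delta\u\|^2$ plus lower-order terms of the form $\|\Delta\phi-W'(\phi)\|^4$ and $\|\nabla\phi\|^{16}$, after inserting the elliptic bound \eqref{GNaab} and the splitting $\Delta\phi=(\Delta\phi-W'(\phi))+W'(\phi)$; the buoyancy term $K_4$ is absorbed into $\epsilon\|\u_t\|^2$ and $C\|\theta\|^2\leq C(\|\nabla\theta\|^8+1)$.

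The key obstacle is the capillary stress term $K_3=-\int_\Omega\nabla\cdot[\lambda(\theta)\nabla\phi\otimes\nabla\phi]\cdot\u_t\,dx\leq \epsilon\|\u_t\|^2+C\|\nabla\cdot[\lambda(\theta)\nabla\phi\otimes\nabla\phi]\|^2$. I would expand the divergence into a piece carrying $\lambda'(\theta)(\nabla\theta\cdot\nabla\phi)\nabla\phi$ and a piece carrying $\lambda(\theta)$ times second derivatives of $\phi$; the latter is estimated exactly as $I_{4b}$ in Lemma \ref{high3d}, giving the $\|\nabla\phi\|^{16}$-type terms. The former is the genuinely new difficulty: in $2D$ the gradient $\nabla\phi$ is controlled only in $H^1$, so $\nabla\theta$ must be placed in a norm strong enough to close the $L^2$-product, and Agmon/Gagliardo--Nirenberg force the appearance of $\|\theta\|_{H^3}$. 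Estimating $\|\lambda'(\theta)(\nabla\theta\cdot\nabla\phi)\nabla\phi\|^2$ by interpolating $\nabla\theta$ between $L^2$ and $\mathbf{H}^2$ and then applying Young's inequality so as to isolate $\|\theta\|_{H^3}^2$ yields precisely the coefficient $\bigl(\epsilon+C_3\epsilon^{-2}\|\nabla\theta\|^4\|\nabla\phi\|^8\bigr)$. I would deliberately leave this $\|\theta\|_{H^3}^2$ term on the right-hand side: it is not absorbable here, but will be controlled later (in the proof of Theorem \ref{str2D}) through a separate $H^3$-estimate for $\theta$ built from the transformed variable $\vartheta$ in \eqref{KK} and Lemma \ref{newt}, in tandem with the Stokes estimate of Lemma \ref{stokes} controlling $\|\Delta\u\|$ (which is also why $4\epsilon\|\Delta\u\|^2$ is kept explicit rather than absorbed).

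For \eqref{thetL2} I would follow Step~5 of Lemma \ref{high3d}: differentiate the temperature equation \eqref{4} in time, test with $\theta_t$, and note that $\int_\Omega(\u\cdot\nabla\theta_t)\theta_t\,dx=0$ by incompressibility together with $\theta_t|_\Gamma=0$. The two surviving terms $-\int_\Omega\kappa'(\theta)\theta_t\,\nabla\theta\cdot\nabla\theta_t\,dx$ and $-\int_\Omega(\u_t\cdot\nabla\theta)\theta_t\,dx$ are handled by Ladyzhenskaya plus Young, using the uniform bound $\|\nabla\theta\|\leq C$ to convert the resulting products into $C(\|\theta_t\|^4+\|\Delta\theta\|^4)$ while throwing the velocity factor into $\epsilon^3\|\u_t\|^2$ and keeping $(\underline{\kappa}-3\epsilon)\|\nabla\theta_t\|^2$ on the left. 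The overall difficulty is thus concentrated in the capillary stress term of \eqref{uphiH1}; the remainder is careful exponent bookkeeping to ensure that every top-order quantity ($\|\Delta\u\|^2$, $\|\nabla(\Delta\phi-W'(\phi))\|^2$, $\|\nabla\theta_t\|^2$, $\|\theta\|_{H^3}^2$) appears only with a small coefficient or with a coefficient already tamed by the global lower-order estimates.
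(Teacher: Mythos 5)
Your proposal follows essentially the same route as the paper: it re-derives the identities \eqref{tA-part1} and \eqref{tA-part2}, re-estimates $K_1,\dots,K_7$ with the $2D$ Ladyzhenskaya/Gagliardo--Nirenberg inequalities, isolates the capillary term $K_3$ into the $\lambda'(\theta)\nabla\theta$ piece (interpolated to produce $(\epsilon+C_3\epsilon^{-2}\|\nabla\theta\|^4\|\nabla\phi\|^8)\|\theta\|_{H^3}^2$) and the $\lambda(\theta)$ piece (handled as $I_{4b}$), and obtains \eqref{thetL2} by time-differentiating \eqref{4} exactly as in Step~5 of Lemma \ref{high3d}. One caveat: you should not invoke Proposition \ref{BEL1} (whose constants depend on $T$) inside this proof, since the lemma's constants are required to be independent of $t$ and to depend on the data only through $\|\phi_0\|_{L^\infty}$ and $\|\theta_0\|_{L^\infty}$; the paper closes every estimate using only the maximum principles (Lemmas \ref{mphi}, \ref{mtheta}) and the interpolation \eqref{GNbb} $\|\nabla\theta\|_{\mathbf{L}^4}^2\leq c_3\|\Delta\theta\|\,\|\theta\|_{L^\infty}$, keeping all lower-order norms explicit on the right-hand side rather than replacing them by constants.
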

\begin{proof}

First, we try to re-estimate the terms $K_1, ..., K_4$ in \eqref{tA-part1} and the terms $K_5, K_6, K_7$ in \eqref{tA-part2}, respectively. Let $\epsilon\in(0,1)$ be an arbitrary small constant.
Using Lemma \ref{mtheta} and the Gagliardo--Nirenberg inequality $(n=2)$, we have
 \bea K_1 &\leq& \|\u_t\|\|\u\|_{\mathbf{L}^4}\|\nabla \u\|_{\mathbf{L}^4}
 \non\\
 &\leq& c\|\u_t\|\|\u\|^\frac12\|\nabla \u\|\|\Delta \u\|^\frac12
 \non\\
 &\leq&
\epsilon\|\u_t\|^2+ \epsilon\|\Delta \u\|^2+ C\|\u\|^2\|\nabla \u\|^4, \non
 \eea
 \bea
 K_2&\leq& c\|\mu'(\theta)\|_{L^\infty}\|\theta_t\|\|\mathcal{D} \u\|_{\mathbf{L}^4}^2\non\\
 & \leq & C \|\theta_t\|\|\nabla \u\|\|\Delta \u\|\non\\
 &\leq& \epsilon \|\Delta \u\|^2+ C \|\theta_t\|^2\|\nabla \u\|^2,\non
 \eea
 \bea
 K_4 &\leq& C\|\theta\|\|\u_t\|\leq  \epsilon\|\u_t\|^2+C\|\theta\|_{L^\infty}^2\non\\
 &\leq&  \epsilon\|\u_t\|^2+C.\non
 \eea
Concerning $K_3$, it follows from the H\"older inequality and Young's inequality that
\bea
 K_3 & \leq & \epsilon\|\u_t\|^2+ C\|\nabla \cdot[\lambda(\theta) \nabla \phi\otimes\nabla \phi]\|^2\non\\
    &\leq& \epsilon\|\u_t\|^2+C\|\lambda'(\theta)\|_{L^\infty}\|\nabla \theta\|_{\mathbf{L}^6}^2\|\nabla \phi\|_{\mathbf{L}^6}^4\non\\
    &&\ \ + C\|\lambda(\theta)\|_{L^\infty}^2(\|\Delta\phi\|^2+\|\nabla^2\phi\|^2)\|\nabla \phi\|_{\mathbf{L}^\infty}^2\non\\
    &:=& \epsilon\|\u_t\|^2+K_{3a}+K_{3b}.\label{Kes3bb}
 \eea
The term $K_{3a}$ can be estimated by using the Gagliardo--Nirenberg inequality, Lemma \ref{mphi} and Young's inequality such that
 \bea
 K_{3a}
&\leq& C\|\lambda'(\theta)\|_{L^\infty}\|\nabla \theta\|_{\mathbf{L}^6}^2\|\nabla \phi\|_{\mathbf{L}^6}^4\non\\
&\leq& C\lambda_0|b|\|\theta\|_{H^3}^{\frac23}\|\nabla \theta\|^{\frac43}\|\phi\|_{H^3}^{\frac43}\|\nabla\phi\|^{\frac83} \non\\
&\leq& C\|\theta\|_{H^3}^{\frac23}\|\nabla \theta\|^{\frac43}\|\nabla\phi\|^\frac83\Big(\|\nabla(\Delta\phi-W'(\phi))\|\non\\
&& \quad +\|\Delta\phi-W'(\phi)\|+\|W''(\phi)\nabla \phi\|+\|W'(\phi)\|+\|\phi_b\|_{H^\frac52(\Gamma)}+\|\phi\|\Big)^\frac43  \non\\
&\leq&  \epsilon\|\nabla(\Delta\phi-W'(\phi))\|^2+\big(\epsilon+C_3\epsilon^{-2} \|\nabla \theta\|^4\|\nabla\phi\|^8\big) \|\theta\|_{H^3}^2\non\\
&&\quad +C\|\Delta\phi-W'(\phi)\|^4+ C(\|\nabla \theta\|^{8}+\|\nabla \phi\|^{16}+1),\non
 \eea
 where the constant $C_3>0$ depends on $\Omega$, $\lambda_0$ and $|b|$, but not on $\epsilon$. \\
 For the term $K_{3b}$, using Lemma \ref{mtheta}, Agmon's inequality ($n=2$) and Young's inequality, we get
 \bea
 K_{3b} &\leq&
C(1+\|\theta\|_{L^\infty}^2)\|\phi\|_{H^2}^2\|\nabla \phi\|\|\nabla \phi\|_{\mathbf{H}^2}\non\\
&\leq& C(\|\Delta\phi-W'(\phi)\|^2+\|W'(\phi)\|^2+\|\phi_b\|_{H^\frac32(\Gamma)}^2+\|\phi\|^2)\|\nabla \phi\|\Big(\|\nabla(\Delta\phi-W'(\phi))\|\non\\
&& \ \ +\|\Delta\phi-W'(\phi)\|+\|W''(\phi)\nabla \phi\|+\|W'(\phi)\|+\|\phi_b\|_{H^\frac52(\Gamma)}+\|\phi\|\Big)\non\\
&\leq&  \epsilon\|\nabla(\Delta\phi-W'(\phi))\|^2+C\|\nabla \phi\|^2\|\Delta\phi-W'(\phi)\|^4\non\\
&& \ \ +C(\|\Delta\phi-W'(\phi)\|^4+\|\nabla \phi\|^4+1).\non
 \eea
Next, $K_5$ can be simply estimated in the following way
 \bea
 K_5&\leq&  \gamma \|W''(\phi)\|_{L^\infty}\|\Delta\phi-W'(\phi)\|^2\non\\
 &\leq& C\|\Delta\phi-W'(\phi)\|^2, \non
 \eea
while for $K_6$, using integration by parts and the incompressibility condition for $\u$, we have
\bea
K_6&=&2\int_{\Omega}\nabla(\Delta\phi-W'(\phi))\cdot (\nabla\u \nabla\phi)\,dx   \non\\
&\leq&C\|\nabla(\Delta\phi-W'(\phi))\|\|\nabla
\u\|_{\mathbf{L}^4}\|\nabla\phi\|_{\mathbf{L}^4}  \non\\
&\leq&\epsilon\|\nabla(\Delta\phi-W'(\phi))\|^2+C\|\nabla
\u\|\|\Delta \u\|\|\nabla\phi\|\non\\
&&\quad \times \big(\|\Delta\phi-W'(\phi)\|+\|W'(\phi)\|+\|\phi_b\|_{H^\frac32(\Gamma)}+\|\phi\|\big) \non\\
 &\leq& \epsilon \|\Delta \u\|^2+  \epsilon\|\nabla(\Delta\phi-W'(\phi))\|^2\non\\
 && \ \ +C\|\nabla \phi\|^2(\|\nabla \u\|^2+\|\nabla \u\|^4+\|\Delta\phi-W'(\phi)\|^4). \non
 \eea
 Similarly,
 \bea
 K_7&\leq& \|\Delta \u\| \| \nabla \phi\|_{\mathbf{L}^4} \| \Delta \phi-W'(\phi)\|_{L^4}\non\\
 &\leq& \epsilon \|\Delta \u\|^2+C\|\nabla (\Delta \phi-W'(\phi))\|\|\Delta \phi-W'(\phi)\|\|\nabla \phi\|\non\\
 &&\quad \times \big(\|\Delta\phi-W'(\phi)\|+\|W'(\phi)\|+\|\phi_b\|_{H^\frac32(\Gamma)}+\|\phi\|\big) \non\\
 &\leq& \epsilon \|\Delta \u\|^2+  \epsilon\|\nabla(\Delta\phi-W'(\phi))\|^2 \non\\
 && \quad +C\|\nabla \phi\|^2(\|\Delta\phi-W'(\phi)\|^4+\|\Delta\phi-W'(\phi)\|^2).\non
 \eea
Collecting the above estimates together, we infer from \eqref{tA-part1}, \eqref{tA-part2} and Young's inequality that the differential inequality \eqref{uphiH1} holds.

Finally, we re-estimate the terms $K_{11}$, $K_{12}$, $K_{13}$ in the inequality \eqref{temperature-1}. First we recall that $K_{13}=0$. Then using the Sobolev embedding theorem, \eqref{GNbb} and Lemma \ref{mtheta}, we get
 \bea
K_{11}  &\leq& \|\kappa'(\theta)\|_{L^\infty}\|\nabla \theta_t\|\|\theta_t\|_{L^4}\|\nabla \theta\|_{\mathbf{L}^4}\non\\
&\leq& \epsilon \|\nabla \theta_t\|^2+c\|\kappa'(\theta)\|_{L^\infty}^2\|\nabla \theta_t\|\|\theta_t\|\|\Delta \theta\|\|\theta\|_{L^\infty}
\non\\
&\leq& 2\epsilon \|\nabla \theta_t\|^2+ C \|\theta_t\|^2\|\Delta \theta\|^2,\non
 \eea
 and
 \bea
 K_{12}
 &\leq& \|\u_t\|\|\nabla \theta\|_{\mathbf{L}^4}\|\theta_t\|_{L^4}\non\\
 &\leq& \epsilon^3 \|\u_t\|^2+ C\epsilon^{-3} \|\Delta\theta\|\|\theta\|_{L^\infty}\|\nabla \theta_t\|\|\theta_t\|\non\\
 &\leq& \epsilon^3 \|\u_t\|^2+ \epsilon \|\nabla \theta_t\|^2+ C \|\theta_t\|^2\|\Delta \theta\|^2.\non
 \eea
As a consequence, we can conclude the differential inequality \eqref{thetL2} from the above estimates, \eqref{temperature-1} and Young's inequality. The proof is complete.
\end{proof}

 Now we proceed to derive global higher-order estimates for $(\u, \phi, \theta)$.

\begin{proposition}\label{high2D}
Let $n=2$. Suppose that $(\u, \phi, \theta)$ is a smooth solution to problem \eqref{1}--\eqref{ini}.

(1) If $|\phi_0|\leq 1$  in $\Omega$, $|\phi_b|\leq 1$ on $\Gamma$ and  $\theta_0$ satisfies $\|\theta_0\|_{L^\infty}\leq \Theta_1$, then for arbitrary but fixed $T\in (0,+\infty)$, we have the following estimates
\bea
&& \|\u\|_{\mathbf{V}}+\|\phi\|_{H^2}+\|\theta\|_{H^2}+\|\phi_t\|+\|\theta_t\|\leq C_T,\quad \forall\, t\in [0,T],\label{ha}\\
&& \int_0^T \Big(\|\u\|^2_{\mathbf{H}^2}+\|\phi\|_{H^3}^2+\|\theta\|_{H^3}^2+ \|\u_t\|^2+ \|\phi_t\|_{H^1}^2+\|\theta_t\|_{H^1}^2\Big)dt \leq C_T,\label{ha1}
\eea
where $C_T$ is a constant depending on $\Omega$, $\|\u_0\|_{\mathbf{V}}$, $\|\phi_0\|_{H^2}$, $\|\phi_b\|_{H^\frac52(\Gamma)}$, $\|\theta_0\|_{H^2}$, the coefficients of the system as well as $T$.

(2) If $|\phi_0|\leq 1$  in $\Omega$, $|\phi_b|\leq 1$ on $\Gamma$ and $\theta_0$ satisfies $\|\theta_0\|_{L^\infty}\leq \Theta_2$, then we have
\bea
&& \|\u\|_{\mathbf{V}}+\|\phi\|_{H^2}+\|\theta\|_{H^2}+\|\phi_t\|+\|\theta_t\|\leq C,\quad \forall\, t\geq 0,\label{hb}\\
&& \int_0^T \Big(\|\u\|^2_{\mathbf{H}^2}+\|\phi\|_{H^3}^2+\|\theta\|_{H^3}^2+ \|\u_t\|^2+ \|\phi_t\|_{H^1}^2+\|\theta_t\|_{H^1}^2\Big)dt \leq C_T,\label{hb1}
\eea
where $T\in (0,+\infty)$ and the constant $C$ in \eqref{hb} is independent of $t$. Moreover, the following decay property holds
\be
\lim_{t\to +\infty} (\|\u(t)\|_{\mathbf{V}}+\|\Delta \phi(t)-W'(\phi(t))\|+\|\theta(t)\|_{H^2})=0.\label{hb2}
\ee
\end{proposition}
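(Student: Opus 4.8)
The plan is to promote the lower-order bounds already available to full $H^2$-regularity of $(\u,\phi,\theta)$ by integrating in time the two differential inequalities \eqref{uphiH1} and \eqref{thetL2} of Lemma \ref{high2d}. For part (1), under $\|\theta_0\|_{L^\infty}\le\Theta_1$, Proposition \ref{BEL1} supplies the controls $\u\in L^\infty(0,T;\mathbf{H})\cap L^2(0,T;\mathbf{V})$, $\phi\in L^\infty(0,T;H^1)\cap L^2(0,T;H^2)$, $\theta\in L^\infty(0,T;H^1)\cap L^2(0,T;H^2)$ and $\theta_t\in L^2(0,T;L^2)$, all bounded by $C_T$. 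I would form the functional $\mathcal{A}(t)=\int_\Omega\mu(\theta)|\mathcal{D}\u|^2\,dx+\tfrac12\|\Delta\phi-W'(\phi)\|^2+\tfrac{M}{2}\|\theta_t\|^2$ with a large weight $M$ and add \eqref{uphiH1} to $M$ times \eqref{thetL2}, so that its dissipation is $(1-3\epsilon)\|\u_t\|^2+(\gamma-4\epsilon)\|\nabla(\Delta\phi-W'(\phi))\|^2+M(\underline{\kappa}-3\epsilon)\|\nabla\theta_t\|^2$. The point of starting from Proposition \ref{BEL1} is that every factor multiplying a top-order quantity on the right-hand sides, e.g. $\|\nabla\theta\|^4\|\nabla\phi\|^8$, is then bounded in $L^\infty(0,T)$, while $\|\nabla\u\|^2,\ \|\Delta\phi-W'(\phi)\|^2,\ \|\theta_t\|^2,\ \|\Delta\theta\|^2$ are bounded in $L^1(0,T)$.

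The crux is to dispose of the two genuinely higher-order terms $\|\Delta\u\|^2$ and $\|\theta\|_{H^3}^2$ sitting on the right. For $\|\Delta\u\|$ I would invoke the variable-viscosity Stokes estimate \eqref{stoe} of Lemma \ref{stokes} applied to $-\nabla\cdot(2\mu(\theta)\mathcal{D}\u)+\nabla P=-\u_t-(\u\cdot\nabla)\u-\nabla\cdot\sigma+\theta\mathbf{g}$, which bounds $\|\Delta\u\|^2$ by $\|\u_t\|^2$ plus terms controllable through $\mathcal{A}$ and $L^1(0,T)$ coefficients; legitimacy of \eqref{stoe} requires the H\"older bound $[\mu(\theta)]_\delta\le C$, which I would get from Lemma \ref{mthetaH} after checking $\u\in L^4(0,T;\mathbf{L}^4)$ via the $2D$ Ladyzhenskaya inequality $\|\u\|_{\mathbf{L}^4}^2\le c\|\u\|\,\|\nabla\u\|$ and Proposition \ref{BEL1}, the factor $\|\mu(\theta)\|_{H^2}$ being tamed by $\|\theta\|_{H^2}$ and \eqref{bdeeb}. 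For $\|\theta\|_{H^3}$ I would redo the elliptic-regularity computation \eqref{varthH33D}--\eqref{varthH33Da} for the transformed variable $\vartheta=\int_0^\theta\kappa(s)\,ds$ solving \eqref{newtheta1}, but now keeping the $\|\Delta\u\|$ appearing there under a Young's inequality with an adjustable small constant, so that $\|\theta\|_{H^3}^2\le C\|\theta_t\|_{H^1}^2+\epsilon'^2\|\Delta\u\|^2+(\text{lower order})$, with $\|\Delta\theta\|^2$ handled by \eqref{h2the} of Lemma \ref{newt} and relation \eqref{ttt3}. Substituting both back, I would carry out the decisive balancing: fix $\epsilon$ small, then $\epsilon'$ small relative to the bounded coefficient of $\|\theta\|_{H^3}^2$, then $M$ large enough to dominate that coefficient on the $\|\nabla\theta_t\|^2$-part, so that every top-order right-hand term is absorbed into the dissipation. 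Splitting the quartic terms as $\|\nabla\u\|^4=\|\nabla\u\|^2\cdot\|\nabla\u\|^2$ (and likewise for $\|\Delta\phi-W'(\phi)\|^4$, $\|\theta_t\|^4$, $\|\Delta\theta\|^4$), one factor retained as $\lesssim\mathcal{A}$ and the other used as an $L^1(0,T)$ Gronwall multiplier, I arrive at $\tfrac{d}{dt}\mathcal{A}+c_0\big(\|\u_t\|^2+\|\Delta\u\|^2+\|\nabla(\Delta\phi-W'(\phi))\|^2+\|\nabla\theta_t\|^2\big)\le g(t)\,\mathcal{A}+f(t)$ with $g,f\in L^1(0,T)$. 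A linear Gronwall inequality then gives $\mathcal{A}(t)\le C_T$ and the integrated dissipation bound, from which \eqref{ha} follows (with $\|\phi\|_{H^2}$, $\|\theta\|_{H^2}$ via the elliptic estimate, \eqref{h2the} and \eqref{ttt3}, and $\|\phi_t\|$ from equation \eqref{3}), while \eqref{ha1} follows by reading off $\|\u\|_{L^2\mathbf{H}^2}$, $\|\phi\|_{L^2H^3}$, $\|\theta\|_{L^2H^3}$, $\|\u_t\|_{L^2\mathbf{H}}$, $\|\phi_t\|_{L^2H^1}$, $\|\theta_t\|_{L^2H^1}$ from the dissipation integrals.

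For part (2), under $\|\theta_0\|_{L^\infty}\le\Theta_2$ I would replace Proposition \ref{BEL1} by the uniform-in-time Proposition \ref{low-estimate}: the bounds \eqref{low1} hold for all $t\ge0$ and, decisively, \eqref{low2} makes $\|\nabla\u\|^2$, $\|\Delta\phi-W'(\phi)\|^2$ and $\|\Delta\theta\|^2$ integrable over all of $(0,+\infty)$. All the coefficients above are then bounded uniformly in $t$, so the same functional argument closes with $t$-independent constants and yields the uniform bound \eqref{hb} together with the finite local integrals \eqref{hb1}. For the decay \eqref{hb2} I would use the elementary fact that a nonnegative function lying in $L^1(0,\infty)$ with derivative bounded in $L^1(0,\infty)$ tends to $0$; applying it to $\|\nabla\u\|^2$, $\|\Delta\phi-W'(\phi)\|^2$ and $\|\Delta\theta\|^2$, whose time-integrability is \eqref{low2} and whose time-derivatives are controlled by the now-uniform higher-order bounds, gives $\|\u\|_{\mathbf{V}}\to0$, $\|\Delta\phi-W'(\phi)\|\to0$ and $\|\Delta\theta\|\to0$; since $\theta\in H^1_0$ so that $\|\theta\|_{H^2}\simeq\|\Delta\theta\|$, this is exactly \eqref{hb2}.

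I expect the principal obstacle to be the closure of the top-order estimates described in the second paragraph: $\|\Delta\u\|$, $\|\theta\|_{H^3}$, $\|\u_t\|$ and $\|\nabla\theta_t\|$ are mutually coupled, and the coefficient $C_3\epsilon^{-2}\|\nabla\theta\|^4\|\nabla\phi\|^8$ in front of $\|\theta\|_{H^3}^2$ is only bounded, never small, so the circularity can be broken only by the ordered, multi-parameter tuning of $\epsilon$, $\epsilon'$ and $M$ that relies on the lower-order estimates to render all such coefficients bounded (part 1) or uniformly bounded and time-integrable (part 2). A secondary but essential technical prerequisite is the justification of the variable-viscosity Stokes estimate \eqref{stoe}, which forces the H\"older bound on $\theta$ from Lemma \ref{mthetaH} and hence the preliminary $\u\in L^4(0,T;\mathbf{L}^4)$ estimate.
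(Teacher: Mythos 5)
Your proposal follows essentially the same route as the paper's proof: the same functional (the paper's $\mathcal{Y}(t)=2\int_\Omega\mu(\theta)|\mathcal{D}\u|^2dx+\|\Delta\phi-W'(\phi)\|^2+\eta\|\theta_t\|^2$, with $\eta$ playing the role of your $M$), the same closure of the top-order terms $\|\Delta \u\|$ and $\|\theta\|_{H^3}$ via the variable-viscosity Stokes estimate of Lemma \ref{stokes} (legitimized by the H\"older bound of Lemma \ref{mthetaH} and $\u\in L^4(0,T;\mathbf{L}^4)$) and elliptic regularity for the transformed variable $\vartheta$, the same ordered tuning of $\epsilon$ and $\eta\sim\epsilon^{-2}$, and the same exploitation of the $L^1$-in-time integrability of $\mathcal{Y}$ to close the Gronwall argument (your linear form $g(t)\mathcal{A}+f(t)$ with $g\in L^1$ is exactly the paper's $C\mathcal{Y}^2+C$ combined with $\mathcal{Y}\in L^1$ and the Gronwall-type lemma of \cite{Z04}). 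The one place to be slightly careful is your decay step in part (2): the time derivatives of $\|\nabla\u\|^2$, $\|\Delta\phi-W'(\phi)\|^2$ and $\|\Delta\theta\|^2$ are only \emph{uniformly locally} integrable on intervals $[t,t+1]$ rather than globally in $L^1(0,+\infty)$ (the dissipation integrals in \eqref{hb1} carry a constant growing with $T$), but this uniform local integrability together with \eqref{low2} still forces the decay, which is in substance what the paper obtains by applying the cited lemma directly to $\mathcal{Y}$.
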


\begin{proof}
In order to make use of the differential inequalities obtained in Lemma \ref{high2d}, the key point is to treat the quantities $\|\Delta \theta\|$, $\|\theta\|_{H^3}$ and $\|\Delta \u\|$ on the right-hand side of \eqref{uphiH1} and  \eqref{thetL2}, which involve higher-order spatial derivatives. In the subsequent proof, let $\epsilon\in(0,1)$ be the same small constant as in Lemma \ref{high2d}. We consider two cases in which the initial temperature $\theta_0$ satisfies different constraints on its $L^\infty$-norm.

\medskip

\textbf{Case 1. Suppose $\|\theta_0\|_{L^\infty}\leq \Theta_1$}.

 In this case, we have global lower-order estimates for $(\u, \phi, \theta)$ as in \eqref{es1} (see Proposition \ref{BEL1}). Below we use $C_i$ for constants that may depend on $\|\u_0\|$, $\|\phi_0\|_{H^1\cap L^\infty}$,  $\|\phi_b\|_{H^\frac52(\Gamma)}$, $\|\theta_0\|_{H^2}$, $\Omega$, coefficients of the system and possibly the time $T$, but they are independent of the parameter $\epsilon$.

Applying the elliptic regularity theorem to equation \eqref{newtheta1} for the transformed temperature $\vartheta$, using the Sobolev embedding theorem, Agmon's inequality ($n=2$), the Poincar\'e inequality and Young's inequality, we have
\bea
 \|\vartheta\|_{H^3}
&\leq& c(\|\chi'(\vartheta)(\vartheta_t+\u\cdot\nabla \vartheta)\|_{H^1}+\|\vartheta\|)\non\\
&\leq& c\|\chi'(\vartheta)\|_{L^\infty}\|\vartheta_t+\u\cdot\nabla \vartheta\|_{H^1}+c\|\nabla \chi'(\vartheta)\|_{\mathbf{L}^\infty}\|\vartheta_t+\u\cdot\nabla \vartheta\|+c\|\vartheta\|\non\\
&\leq& c \underline{\kappa}^{-1} (\|\vartheta_t\|_{H^1}+\|\u\|_{\mathbf{L}^4}\|\nabla \vartheta\|_{\mathbf{L}^4}+\|\nabla \u\|_{\mathbf{L}^4}\|\nabla \vartheta\|_{\mathbf{L}^4}+\|\u\|_{\mathbf{L}^4}\|\nabla^2\vartheta\|_{\mathbf{L}^4})\non\\
&& \quad +c \Big\|\frac{\kappa'(\theta)}{\kappa(\theta)^3}\Big\|_{L^\infty}\|\nabla \theta\|_{\mathbf{L}^\infty}(\|\vartheta_t\|+\|\u\|_{\mathbf{L}^4}\|\nabla \theta\|_{\mathbf{L}^4})+c\|\vartheta\|\non\\
&\leq& c \underline{\kappa}^{-1}\|\nabla \vartheta_t\|+ C\|\nabla \u\|^\frac12\|\u\|^\frac12\|\Delta \vartheta\|^\frac12\|\nabla \vartheta\|^\frac12
\non\\
&& \quad +C\|\Delta \u\|^\frac12\|\nabla \u\|^\frac12\|\Delta \vartheta\|^\frac12\|\nabla \theta\|^\frac12 +C\|\nabla \u\|^\frac12\|\u\|^\frac12\|\vartheta\|_{H^3}^\frac12\|\Delta\vartheta\|^\frac12\non\\
&& \quad +C\|\vartheta\|_{H^3}^\frac12\|\nabla \vartheta\|^\frac12(\|\vartheta_t\|+\|\nabla \u\|^\frac12\|\u\|^\frac12\|\Delta \vartheta\|^\frac12\|\nabla \vartheta\|^\frac12)+c\|\vartheta\|\non\\
&\leq& \frac12 \|\vartheta\|_{H^3} + c \underline{\kappa}^{-1}\|\nabla \vartheta_t\|+ \frac12\epsilon^\frac32\|\Delta \u\|+C\|\nabla \u\|\|\u\|+C\|\Delta \vartheta\|\|\nabla \vartheta\| \non\\
&&\quad +C\|\Delta \vartheta\|\|\nabla \vartheta\|\|\nabla \u\|+C\|\Delta \vartheta\|\|\nabla \u\|\|\u\|+C\|\vartheta_t\|^2\|\nabla \vartheta\|\non\\
&&\quad +C\|\Delta \vartheta\|\|\nabla \vartheta\|^2\|\nabla \u\|\|\u\|+C\|\vartheta\|,\label{varthH3}
\eea
where the constant $C$ may depend on $\Omega$, $\|\theta_0\|_{L^\infty}$, $\epsilon$ and coefficients of the system, but it is independent of $t$.

By Proposition \ref{BEL1} and the estimates \eqref{es1}, \eqref{varthH1}, we infer from  \eqref{h2the} that
\be
\|\Delta \vartheta\|\leq C\|\theta_t\|+C_T\|\nabla \u\|,\non
\ee
which together with \eqref{varthH3} and Young's inequality yields that
\bea
\|\vartheta\|_{H^3}&\leq& C_4\|\nabla \theta_t\| + \epsilon^\frac32\|\Delta \u\|+ C_T(\|\nabla \u\|^2+\|\theta_t\|^2) +C_T,\label{vtta}
\eea
where the constant $C_4$ may depend on $\Omega$, $\|\theta_0\|_{L^\infty}$ and coefficients of the system, but independent of $\epsilon$ and $t$, while the constant $C_T$ may depend on $\epsilon$, $\Omega$, $\|\u_0\|$, $\|\phi_0\|_{H^1\cap L^\infty}$, $\|\theta_0\|_{H^1\cap L^\infty}$, coefficients of the system and $T$. As a consequence, we deduce from \eqref{vtta}, the Gagliardo--Nirenberg inequality, the relations \eqref{ttt3} and \eqref{ttt5} that
\bea
\|\Delta \theta\|&\leq& C\|\theta_t\|+C_T\|\nabla \u\|,\label{thetaH2}\\
\|\theta\|_{H^3} &\leq&  C(\|\vartheta\|_{H^3}+\|\vartheta\|_{H^3}^\frac12\|\Delta \vartheta\|\|\nabla \vartheta\|^\frac12+\|\vartheta\|_{H^3}\|\nabla \vartheta\|^2+\|\Delta \vartheta\|)\non\\
 &\leq& C_T\|\vartheta\|_{H^3} +C_T( \|\Delta \vartheta\|^2+\|\Delta \vartheta\|) \non\\
&\leq& \frac12 C_{5}^\frac12\|\nabla \theta_t\| + \frac12 C_{6}^\frac12\epsilon^\frac32\|\Delta \u\|+ C_T(\|\nabla \u\|^2+\|\theta_t\|^2)+C_T,\label{thetaH3}
\eea
where the constants $C_{5}, C_{6}$ may depend on $\|\u_0\|$, $\|\phi_0\|_{H^1\cap L^\infty}$,  $\|\phi_b\|_{H^\frac52(\Gamma)}$, $\|\theta_0\|_{H^1\cap L^\infty}$, $T$, but independent of $\epsilon$.

Now it remains to estimate $\|\Delta \u\|$. We set
$$\mathbf{f}= -\u_t-\u\cdot\nabla \u-\nabla \cdot \Big[\lambda(\theta)\nabla \phi\otimes \nabla \phi+\lambda(\theta)\Big(\frac12|\nabla \phi|^2+W(\phi)\Big)\mathbb{I}\Big]+ \mathbf{g}\theta$$
and write the equation \eqref{1} as
\be -\nabla \cdot(2\mu(\theta)\mathcal{D} \u)+\nabla P=\mathbf{f}.\label{pp}\ee
It follows from the definition of $\mathbf{f}$ that
\bea
\|\mathbf{f}\|&\leq& \|\u_t\|+\|\u\|_{\mathbf{L}^4}\|\nabla \u\|_{\mathbf{L}^4}+\|\lambda(\theta)\|_{L^\infty}\|\phi\|_{H^2}\|\nabla \phi\|\non\\
&& \quad +\|\lambda'(\theta)\|_{L^\infty}\|\nabla \theta\|_{\textbf{L}^3}\|\nabla \phi\|_{\textbf{L}^3}^2+\|\lambda'(\theta)\|_{L^\infty}\|\nabla \theta\|\|W(\phi)\|_{L^\infty}\non\\
&&\quad +\|\lambda(\theta)\|_{L^\infty}\|W'(\phi)\|_{L^\infty}\|\nabla \phi\|+C\|\theta\|.\non
\eea
 Then by Proposition \ref{BEL1}, \eqref{thetaH2} and the elliptic estimate $$\|\phi\|_{H^2}\leq C(\|\Delta\phi-W'(\phi)\|+\|W'(\phi)\|+ \|\phi_b\|_{H^\frac32(\Gamma)}+\|\phi\|),$$
 we obtain
 \bea
 \|\mathbf{f}\|&\leq& \|\u_t\|+ C\|\u\|^\frac12\|\Delta \u\|^\frac12\|\nabla \u\|+C\|\phi\|_{H^2}\|\nabla \phi\|\non\\
 &&\quad +C\|\Delta \theta\|^\frac13\|\nabla \theta\|^\frac23\|\phi\|_{H^2}^\frac23\|\nabla \phi\|^\frac43+C_T\non\\
 &\leq& \epsilon_1\|\Delta \u\|+ \|\u_t\| +  C_T\|\nabla \u\|^2 +C_T(\|\Delta\phi-W'(\phi)\|+\|\Delta\theta\|+1)\non\\
 &\leq& \epsilon_1\|\Delta \u\|+ \|\u_t\|  + C_T(\|\nabla \u\|^2+\|\theta_t\|+\|\Delta\phi-W'(\phi)\|+1),\non
 \eea
 where $\epsilon_1>0$ is an arbitrary small constant (independent of $\epsilon$).

 It follows from the estimate \eqref{es1} and the Sobolev embedding theorem ($n=2$) that $\|\u\|_{L^4(0,T;\mathbf{L}^4(\Omega))}\leq C_T$. Since $H^2(\Omega) \hookrightarrow C^\alpha(\overline{\Omega})$ with $\alpha\in (0,1)$ when $n=2$, we infer from Lemma \ref{mthetaH} that $[\theta]_{C^{\frac{\delta}{2}, \delta}}\leq C_T$ for some $\delta\in (0, \alpha)$. Due to this H\"older estimate for $\theta$, we are able to apply Lemma \ref{stokes} to the Stokes equation \eqref{pp} and deduce from the estimates  \eqref{PL2}--\eqref{stoe} that
\bea
&& \|\Delta \u\|+\|\nabla P\|\non\\
&\leq& C\|\mathbf{f}\|+C\|\mu(\theta)\|_{H^1}\|\mu(\theta)\|_{H^2}\|\nabla \u\|+C\|P\|\non\\
&\leq& C\epsilon_1\|\Delta \u\|+ C\|\u_t\|+C_T\|\nabla \u\|^2 +C_T(\|\Delta \theta\|+\|\nabla \theta\|_{\mathbf{L}^4}^2) \|\nabla \u\|\non\\
&& \quad + C_T(\|\nabla \u\|+\|\theta_t\|+\|\Delta\phi-W'(\phi)\|)+C_T\non\\
&\leq& C\epsilon_1\|\Delta \u\|+ C\|\u_t\| +C_T(\|\nabla \u\|^2+\|\theta_t\|^2+\|\Delta\phi-W'(\phi)\|^2)+C_T,\non
\eea
where the constant $C$ only depends on $\Omega$ and $\|\theta_0\|_{H^2}$, but not on $\epsilon_1$. Then taking $\epsilon_1$ in the above inequality sufficiently small, we have
\bea
\|\Delta \u\|&\leq&  \frac12 C_{7}^\frac12 \|\u_t\| +C_T(\|\nabla \u\|^2+\|\theta_t\|^2+\|\Delta\phi-W'(\phi)\|^2+1),\label{deltau}
\eea
where the constant $C_{7}$ only depends on $\Omega$ and $\|\theta_0\|_{H^2}$.

Let $\eta>0$ be a constant (not necessary small) that will be determined later (see \eqref{eta} below). We introduce the quantity
\be
\mathcal{Y}(t):=2\int_\Omega \mu(\theta)|\mathcal{D} \u|^2dx+\|\Delta\phi-W'(\phi)\|^2+\eta\|\theta_t\|^2. \label{Y}
\ee
Multiplying the differential inequality \eqref{thetL2} by $\eta$ and adding the resultant with  \eqref{uphiH1} (see Lemma \ref{high2d}), using Proposition \ref{BEL1}, the estimates \eqref{thetaH2}, \eqref{thetaH3}, \eqref{deltau} and the fact $\epsilon\in (0,1)$, we obtain that
 \bea
 &&\frac{1}{2}\frac{d}{dt}\mathcal{Y}(t)+(1-3\epsilon)\|
\u_t\|^2+(\gamma-4\epsilon)\|\nabla(\Delta\phi-W'(\phi))\|^2\non\\
&& \quad +\eta(\underline{\kappa}- 3\epsilon )\|\nabla \theta_t\|^2  \non\\
&\leq& \eta\epsilon^3 \|\u_t\|^2+ \left(\epsilon + C_8\epsilon^{-2}\right) \left(C_{5}\|\nabla \theta_t\|^2+C_{6}\epsilon^3\|\Delta \u\|^2\right) +4\epsilon \|\Delta \u\|^2
\non\\
&& \quad +C_T\left(\|\nabla \u\|^4+\|\Delta\phi-W'(\phi)\|^4+\|\theta_t\|^4\right)+C_T\non\\
&\leq& \Big[\eta\epsilon^2 + \frac12 C_{6}C_{7}(\epsilon^3+C_8) + 2C_{7}\Big] \epsilon \|\u_t\|^2+(\epsilon + C_8\epsilon^{-2})C_{5}\|\nabla \theta_t\|^2
\non\\
&& \quad +C_T\left(\|\nabla \u\|^4+\|\Delta\phi-W'(\phi)\|^4+\|\theta_t\|^4\right)+C_T\non\\
&\leq& \left(\eta \epsilon^2 +C_9\right) \epsilon \|\u_t\|^2+C_{10}\epsilon^{-2}\|\nabla \theta_t\|^2
\non\\
&&\quad +C_T\left(\|\nabla \u\|^4+\|\Delta\phi-W'(\phi)\|^4+\|\theta_t\|^4\right)+C_T, \label{diff1a}
  \eea
where the constants $C_9$ and $C_{10}$ do not depend on $\epsilon$ and $\eta$.

For arbitrary but fixed $T\in(0,+\infty)$, on the time interval $[0,T]$, we shall properly choose the constants $\epsilon\in (0,1)$ and $\eta>0$ in the differential inequality \eqref{diff1a} such that the following relations are satisfied
\be
\begin{cases}
1-3\epsilon- (\eta\epsilon^2 +C_9) \epsilon\geq \displaystyle{\frac12},\\
\gamma-4\epsilon\geq \displaystyle{\frac{\gamma}{2}},\\
\eta(\underline{\kappa} -3\epsilon) - C_{10}\epsilon^{-2} = \displaystyle{\frac{\eta\underline{\kappa}}{2}}.
\end{cases}\non
\ee
One can verify that a possible choice for $\epsilon$, $\eta$ is as follows
\be
\begin{cases}
0<\epsilon\leq \min\left\{\displaystyle{\frac{1}{8}, \  \frac{\gamma}{8}, \  \frac{\underline{\kappa}}{12}, \
\frac{\underline{\kappa}}{8(4C_{10}+C_9\underline{\kappa})}}\right\},\\
 \eta = \displaystyle{\frac{2C_{10}}{(\underline{\kappa}-6\epsilon)\epsilon^2}}.
 \end{cases}
\label{eta}
\ee
After fixing the constants $\epsilon$ and $\eta$, using the fact $\|\nabla \u\|^2\leq 2\underline{\mu}^{-1} \int_\Omega \mu(\theta)|\mathcal{D}\u|^2 dx$, we infer from \eqref{diff1a} that
\bea
 &&\frac{d}{dt}\mathcal{Y}(t)+\|
\u_t\|^2+\gamma\|\nabla(\Delta\phi-W'(\phi))\|^2+\eta\underline{\kappa}\|\nabla \theta_t\|^2  \non\\
&\leq& C_T\mathcal{Y}(t)^2+C_T. \label{diff1}
  \eea
  On the other hand, we infer from Lemma \ref{mtheta} and Proposition \ref{BEL1} that
  $$2 \int_\Omega \mu(\theta)|\mathcal{D} \u|^2dx+\|\Delta\phi-W'(\phi)\|^2+\|\theta_t\|^2\in L^1(0,T).$$
 Hence, it follows from \eqref{diff1} and the Gronwall type inequality in \cite[Lemma 6.2.1]{Z04} that for arbitrary but fixed $T\in(0,+\infty)$,
 \bea
 &&\sup_{t\in[0,T]}\Big(\int_\Omega \mu(\theta)|\mathcal{D} \u|^2dx+\|\Delta\phi-W'(\phi)\|^2+\|\theta_t\|^2\Big)\leq C_T,\non\\
 && \int_0^T \left(\|\u_t\|^2+\|\nabla(\Delta\phi-W'(\phi))\|^2+\|\nabla \theta_t\|^2 \right)dt\leq C_T,\non
 \eea
 which together with the estimates \eqref{thetaH2}, \eqref{thetaH3} and \eqref{deltau} yield our conclusions \eqref{ha} and \eqref{ha1}.

 \medskip

\textbf{Case 2. Suppose $\|\theta_0\|_{L^\infty}\leq \Theta_2$}.

In this case, we can obtain the uniform-in-time estimates \eqref{low1} and \eqref{low2} (see Proposition \ref{low-estimate}) instead of the time-dependent estimates \eqref{es1}.
Then by corresponding modifications in the argument for Case 1, we can see that for $t>0$, the following differential inequality holds (comparing with \eqref{diff1})
\be
 \frac{d}{dt}\mathcal{Y}(t)+\|
\u_t\|^2+\gamma\|\nabla(\Delta\phi-W'(\phi))\|^2+\eta \underline{\kappa}\|\nabla \theta_t\|^2  \leq C\mathcal{Y}(t)^2+C, \label{diff2}
  \ee
 where $C$ is a constant that depend on $\Omega$, $\|\u_0\|$, $\|\phi_0\|_{H^1\cap L^\infty}$, $\|\phi_b\|_{H^\frac52(\Gamma)}$, $\|\theta_0\|_{H^2}$ and coefficients of the system, but not on time $t$.

 Using the uniform-in-time estimate \eqref{low1}, we have
 \bea
  \|\theta_t\|&\leq& \|\u\cdot\nabla \theta\|+ \|\nabla \cdot(\kappa(\theta)\nabla \theta)\| \non\\
  &\leq& \|\u\|_{\mathbf{L}^4}\|\nabla \theta\|_{\mathbf{L}^4}+ \|\kappa(\theta)\|_{L^\infty}\|\Delta \theta\|+ \|\kappa'(\theta)\|_{L^\infty}\|\nabla \theta\|_{\mathbf{L}^4}^2\non\\
  &\leq& C\|\nabla \u\|^\frac12\|\Delta \theta\|^\frac12+C\|\Delta \theta\|(1+\|\nabla \theta\|)\non\\
  &\leq& C(\|\nabla \u\|+\|\Delta \theta\|)\label{tttt}
  \eea
  where $C$ is independent of $t$.   Then we deduce the $L^1$-integrability of $\mathcal{Y}(t)$ on $\mathbb{R}^+$ from the above estimate and \eqref{low2}:
  \be
  \int_0^{+\infty} \mathcal{Y}(t) dt<+\infty.\label{Y1}
  \ee
Then it follows from \eqref{diff2}--\eqref{Y1} and the Gronwall type lemma in \cite[Lemma 6.2.1]{Z04} that
\be
\sup_{t\geq 0} \mathcal{Y}(t)\leq C\quad \text{ and } \ \lim_{t\to+\infty} \mathcal{Y}(t)=0,\label{uni}
\ee
which easy imply the uniform estimates \eqref{hb} and \eqref{hb1}. By \eqref{ttt3}, we see that
\bea
\|\Delta \theta\|&\leq& C(\|\nabla^2\vartheta\|+\|\nabla \vartheta\|_{\mathbf{L}^4}^2)\non\\
&\leq& C(\|\vartheta_t\|+\|\u\|\|\nabla \u\|\|\nabla \vartheta\|)(1+\|\nabla \vartheta\|)\non\\
&\leq& C(\|\theta_t\|+\|\nabla \u\|),\non
\eea
where the constant $C$ is independent of $t$. Then \eqref{tttt} and \eqref{uni} further indicate that $\lim_{t\to+\infty}\|\Delta \theta\|=0$. As a consequence, the decay property \eqref{hb2} holds.

The proof of Proposition \ref{high2D} is complete.
\end{proof}

\subsubsection{Proof of Theorem \ref{str2D}}
Based on the local well-posedness result, the existence of global strong solutions in $2D$ easily follows from the global-in-time \textit{a priori} estimates obtained in Proposition \ref{high2D}. Besides, uniqueness of global strong solutions can be proved in the same way as for Theorem \ref{str3D} (see \eqref{conti}). Hence, the details are omitted here.\medskip

\begin{remark}
The continuous dependence estimate \eqref{conti} implies that in the $2D$ case, for any $(\u_0, \phi_0, \theta_0)\in \mathbf{V}\times H^2\times (H^2\cap H^1_0)$ we are able to define a \emph{closed
semigroup} $\Sigma(t)$ for  $t\geq 0$ in the sense of \cite{PZ07}
by setting $(\u(t), \phi(t), (\theta(t))=\Sigma(t)(\u_0, \phi_0, \theta_0)$,  where $(\u,\phi,\theta)$ is the global
strong solution to problem \eqref{1}--\eqref{ini}. This observation will enable us to further investigate the associate infinite dimensional dynamical system of problem \eqref{1}--\eqref{ini}, for instance, the existence of a global attractor.
\end{remark}
\begin{remark} \label{theorem on long time behavior}
Since problem \eqref{1}--\eqref{ini} enjoys an dissipative energy law \eqref{bela} under the assumption $\|\theta_0\|_{L^\infty}\leq \Theta_2$, in addition to the decay property \eqref{hb2}, we can further prove the convergence of global strong solution to a \emph{single steady state} $(\mathbf{0}, \phi_\infty, 0)$ as time goes to
infinity. Indeed, there exists a function $\phi_\infty$ satisfying the nonlinear elliptic boundary value problem:
  \be
  \begin{cases}
  & - \Delta \phi_\infty + W'(\phi_\infty)=0, \ \ \ x \in \Omega,\\
  & \phi_\infty(x)=\phi_b(x),\qquad\qquad\,  x\in \Gamma,
  \end{cases}
   \non
   \ee
such that
  \be
 \|\u(t)\|_{\mathbf{V}}+\|\phi(t)-\phi_\infty\|_{H^2}+\|\theta(t)\|_{H^2}
  \leq C(1+t)^{-\frac{\rho}{(1-2\rho)}}, \quad \forall\, t \geq
 0,\non
 \ee
 where $\rho \in (0,\frac12)$ is a constant depending on $\phi_\infty$.
 Based on the dissipative energy law \eqref{bela} and the uniform-in-time higher-order estimate \eqref{hb}, the proof can be carried out by applying the so-called \L ojasiewicz--Simon approach, following the argument in \cite{WX13}. We leave the details to interested readers.
\end{remark}

\section{Conclusion}

In this paper, we study the well-posedness of a non-isothermal diffuse-interface model  proposed in \cite{LSFY05,SLX09,GLW} that describes the Marangoni effects in the mixture of two incompressible Newtonian fluids due to the thermo-induced surface tension heterogeneity on the interface. This is an interesting physical phenomenon, and some numerical
schemes as well as numerical simulations have been investigated in the recent literature \cite{SLX09,GLW}. The first theoretical
results concerning well-posedness and long-time behavior of solutions are obtained in \cite{WX13}, in which a simplified version of the system \eqref{1}--\eqref{4} with only constant fluid  viscosity and thermal diffusivity was considered. Here, we study the more general case such that the surface tension, fluid viscosity and thermal diffusivity are allowed to be temperature dependent. More precisely, for the initial-boundary value problem \eqref{1}--\eqref{ini}, we have proved that (1) for general regular initial data, strong solutions are locally well-posed in both $2D$ and $3D$ (see Theorem \ref{str3D}); (2) under the assumption that the $L^\infty$-norm of initial temperature is bounded only with respect to the coefficients of problem \eqref{1}--\eqref{ini}, global weak solutions exist in $2D$ (see Theorem \ref{weake1}); (3) under the same bound on the initial temperature variation, problem \eqref{1}--\eqref{ini} also admits a unique global strong solution in $2D$ (see Theorem \ref{str2D}). We believe that establishing well-posedness property of the diffuse-interface model could be viewed as a useful step towards its validation. On the other hand, our results on the long-term dynamics (see Theorem \ref{str2D} and Remark \ref{theorem on long time behavior}) will be helpful for people to understand the complicated nonlinear phenomena and construct suitable numerical schemes. Finally, we expect to extend the results to the case where the phase-filed function $\phi$ satisfies a forth-order Cahn--Hilliard type equation instead of the second-order Allen--Cahn equation. In this case, the mathematical analysis is more challenging due to the loss of maximum principle, which launches an interesting problem for the future study.


\section{Appendix}
\setcounter{equation}{0}

In what follows, we sketch the semi-Galerkin approximate schemes that are used in our previous proofs. They are inspired by \cite{LL95} on the simplified Ericksen--Leslie system for nematic liquid crystal flows. Furthermore, different schemes will be used for the local strong solutions in both $2D$ and $3D$, and for the global weak solutions in $2D$.

Let the family $\{\mathbf{v}_i\}_{i=1}^{\infty}$ be a basis of the Hilbert space $\mathbf{V}$, which is given by eigenfunctions of the Stokes problem
$$ (\nabla \mathbf{v}_i, \nabla \mathbf{w})=\lambda_i(\mathbf{v}_i, \mathbf{w}),\quad \forall\, \mathbf{w}\in \mathbf{V}, \ \text{with}\ \|\mathbf{v}_i\|=1,$$  %
where $\lambda_i$ is the eigenvalue corresponding to $\mathbf{v}_i$. It is well-known that $0<\lambda_1<\lambda_2<...$ is an unbounded monotonically increasing sequence, $\{\mathbf{v}_i\}_{i=1}^{\infty}$ forms a complete orthonormal basis in $\mathbf{H}$ and it is also orthogonal in $\mathbf{V}$ (see \cite{Te01}). In a similar way, let the family $\{w_i\}_{i=1}^{\infty}$ be the Hilbert basis of $H^1_0(\Omega)$, which is given by the eigenfunctions of the Laplacian
$$ -\Delta w_i =\eta_i w_i,\quad \ w_i|_\Gamma=0, \quad  \text{with}\ \ \|w_i\|=1.$$  %
Then the eigenvalues $0<\eta_1<\eta_2<...$ also form an unbounded monotonically increasing sequence, $\{w_i\}_{i=1}^{\infty}$ forms a complete orthonormal basis in $L^2(\Omega)$ and it is also orthogonal in $H^1_0(\Omega)$.  By the elliptic regularity theory, we have $\mathbf{v}_i\in \mathbf{C}^\infty$ and $w_i\in C^\infty$ for all $i\in \mathbb{N}$.

For every $m\in\mathbb{N}$, we denote by $\mathbf{V}_m=\mathrm{span}\{\mathbf{v}_1,...,\mathbf{v}_m\}$ and $W_m=\mathrm{span}\{w_1,...,w_m\}$ the finite dimensional subspaces of $\mathbf{V}$ and $H^1_0(\Omega)$ spanned by their first $m$ basis functions, respectively. Moreover, we use $\Pi_m$ for the orthogonal projection from $\mathbf{H}$ onto $\mathbf{V}_m$ and $\widetilde{\Pi}_m$ for the orthogonal projection from $L^2(\Omega)$ onto $W_m$.
\medskip

\subsection{Semi-Galerkin Approximation Type A: for local strong solutions in both $2D$ and $3D$}

For every $m\in\mathbb{N}$ and arbitrary $T>0$, we consider the following approximate problem (AP1):
looking for functions
\be
\u^m(t,x)=\sum_{i=1}^m g^m_i(t)\mathbf{v}_i(x),\quad \theta^m(t,x)=\sum_{i=1}^m r^m_i(t)w_i(x),\non
\ee
and $\phi^m(t,x)$ such that
\be
\mathrm{(AP1)}\begin{cases}
  \langle \u^m_t, \mathbf{v}\rangle_{\mathbf{V}', \mathbf{V}}
 + \int_\Omega (\u^m\cdot\nabla)\u^m\cdot \mathbf{v} dx
 + 2\int_\Omega \mu(\theta^m) \mathcal{D} \u^m : \mathcal{D}{\mathbf{v}} dx \\
  \qquad = \int_\Omega [\lambda(\theta^m)\nabla\phi^m\otimes\nabla\phi^m] : \nabla \mathbf{v} dx
    + \int_\Omega  \theta^m \mathbf{g} \cdot \mathbf{v} dx,\qquad \forall\,
 \mathbf{v}\in \mathbf{V}_m,\\
  \phi^m_t+\u^m\cdot \nabla\phi^m+\gamma (-\Delta \phi^m+W'(\phi^m)) =0,\quad \text{a.e.\ in}\  (0, T)\times \Omega, \\
   \langle\theta^m_t, w\rangle_{H^{-1},H^{1}_0} + \int_\Omega (\u^m\cdot \nabla\theta^m) w dx +\int_\Omega \kappa(\theta^m)\nabla \theta^m\cdot \nabla w dx=0,\qquad \forall \,w \in W_m,\\
  \phi^m=\phi_b,\quad \text{on} \ (0,T)\times \Gamma,\\
  \u^m|_{t=0}=\u_{0}^m:=\Pi_m \u_0, \quad \phi^m|_{t=0}=\phi_0,\quad \theta^m|_{t=0}=\theta_{0}^m:=\widetilde{\Pi}_m\theta_0,\quad \text{in}\ \Omega.
  \end{cases}\non
 \ee
 \begin{remark}
 In problem $\mathrm{(AP1)}$, we assume that $\mu(\cdot)$, $\kappa(\cdot)$ are taken in such a way as in subsection \ref{uplow} (recall also Remark \ref{uplowremark}).
 \end{remark}
 \bp\label{ppn3}
Suppose $n=2, 3$. We assume that $(\u_0, \phi_0, \theta_0)$ $\in$ $\mathbf{H} \times (H^1(\Omega)\cap L^\infty(\Omega))\times
 H^1_0(\Omega)$, $\phi_b\in H^\frac32(\Gamma)$ with $\phi_0|_\Gamma=\phi_b$ satisfying $|\phi_0|\leq 1$ a.e. in $\Omega$ and $|\phi_b|\leq 1$ on $\Gamma$.
 For every $m\in\mathbb{N}$, there is a time $T_m>0$ depending on $\u_0$, $\phi_0$, $\theta_0$, $m$ and $\Omega$ such that problem $\mathrm{(AP1)}$ admits a unique solution on $[0,T_m]$ satisfying $\u^m\in H^1(0,T_m; \mathbf{V}_m)$,  $\theta^m\in H^1(0,T_m; W_m)$ and $\phi^m\in L^\infty(0, T_m; {H}^1(\Omega)\cap L^\infty(\Omega)) \cap L^2(0,
T_m; {H}^2(\Omega))$. Moreover, if we further assume $\phi_0\in H^2(\Omega)$ and $\phi_b\in H^\frac52(\Gamma)$, then $\phi^m\in L^\infty(0, T_m; {H}^2(\Omega)) \cap L^2(0,T_m; {H}^3(\Omega))$.
\ep

 \begin{proof}

Let $T>0$ and $M=2+2\|\u_0\|^2+2\|\theta_0\|^2$. Consider arbitrary given functions
 $$\mathbf{w}^m(t,x)=\sum_{i=1}^mg^m_i(t)\mathbf{v}_i\in C([0,T]; \mathbf{V}_m),\quad \psi^m(t,x)=\sum_{i=1}^m r^m_i(t)w_i\in C([0,T]; W_m),$$
  with $$g_i^m(0)=(\u_0, \mathbf{v}_i),\ \ r_i^m(0)=(\theta_0, w_i),\quad \sup_{t\in[0,T]}\sum_{i=1}^m (|g^m_i(t)|^2+|r^m_i(t)|^2)\leq M.$$
  It is obvious that
\be
 \sup_{t\in[0,T]}\|\mathbf{w}^m(t,x)\|^2\leq M,\quad  \sup_{t\in[0,T]}\|\mathbf{w}^m(t,x)\|^2_{\mathbf{L}^\infty}\leq M \max_{1\leq i\leq m}\|\mathbf{v}_i\|_{\mathbf{L}^\infty}^2\leq C_m M.\label{aesw}
\ee

\textit{Step 1.} We consider the following semilinear parabolic equation for $\phi^m$ with convention term under the given velocity $\mathbf{w}^m$:
\begin{equation}
\begin{cases}
\phi^m_t+\mathbf{w}^m\cdot \nabla\phi^m+\gamma (-\Delta \phi^m+W'(\phi^m)) =0,\quad \text{a.e.\ in}\  (0, T)\times \Omega,\\
\phi^m=\phi_b,\qquad  \text{on} \ (0,T)\times\Gamma,\\
 \phi^m|_{t=0}=\phi_0,\quad \text{for}\ x\in \Omega,
\end{cases}\label{apphi1}
\end{equation}
Well-posedness of problem \eqref{apphi1} can be obtained by a standard fixed point argument, similar to the liquid crystal system \cite{LL95,C09}. Indeed, we have
\bl\label{phiphiap}
Assume that $\mathbf{w}^m\in C([0,T];\mathbf{V}_m)$, $\phi_0\in H^1(\Omega)\cap L^\infty(\Omega)$, $\phi_b\in H^\frac32(\Gamma)$ with $\phi_0|_\Gamma=\phi_b$ satisfying $|\phi_0|\leq 1$ a.e. in $\Omega$ and $|\phi_b|\leq 1$ on $\Gamma$. Then there exists a unique weak solution $\phi^m\in L^\infty(0, T; {H}^1(\Omega)\cap L^\infty(\Omega)) \cap L^2(0,
T; {H}^2(\Omega))$ to problem \eqref{apphi1} such that $|\phi^m(t,x)|\leq 1$ a.e. in $[0,T]\times \Omega$. Moreover, if $\phi_0\in H^2(\Omega)$ and $\phi_b\in H^\frac52(\Gamma)$, then $\phi^m\in L^\infty(0, T; {H}^2(\Omega)) \cap L^2(0,T; {H}^3(\Omega))$.
\el
It is easy to verify that the weak solution $\phi^m$ satisfies the following energy inequality (see e.g., \cite{LL95})
\bea
&&\frac{d}{dt}\int_\Omega\left( \frac12|\nabla \phi^m|^2+W(\phi^m) \right)dx +\gamma \|-\Delta \phi^m+W'(\phi^m)\|^2 \non\\
&=&-\int_\Omega (\mathbf{w}^m\cdot\nabla \phi^m)(-\Delta \phi^m+W'(\phi^m))dx\non\\
&\leq& \frac{\gamma}{2}\|-\Delta \phi^m+W'(\phi^m)\|^2+\frac{1}{2\gamma}\|\mathbf{w}^m\|_{\mathbf{L}^\infty}^2\|\nabla \phi^m\|^2\non\\
&\leq& \frac{\gamma}{2}\|-\Delta \phi^m+W'(\phi^m)\|^2+\frac{C_m M}{\gamma}\left(\frac12\|\nabla \phi^m\|^2+\int_\Omega W(\phi^m) dx\right),
\eea
where in the last step we have used \eqref{aesw}. Then by the Gronwall inequality, it follows that
\bea
&&\sup_{t\in[0,T]}\int_\Omega\left( \frac12|\nabla \phi^m|^2+W(\phi^m) \right)dx+\frac{\gamma}{2}\int_0^T \|-\Delta \phi^m+W'(\phi^m)\|^2 dt\non\\
&\leq& \left( \frac12\|\nabla \phi_0\|^2+\int_\Omega W(\phi_0) dx\right)(\gamma^{-1} C_m M T+1) e^{\gamma^{-1} C_m M T},
\eea
which together with the weak maximum principle Lemma \ref{mphi} and the elliptic regularity theorem yields
\bea
&& \sup_{t\in[0,T]}\|\phi^m(t)\|_{H^1}^2+\int_0^T \|\phi^m(t)\|_{H^2}^2 dt\non\\
&\leq& K(\|\phi_0\|_{H^1}, \|\phi_b\|_{H^\frac32(\Gamma)}, \gamma, T, M, m):=K.\label{aesw2}
\eea
We note that the constant $K$ on the right-hand side of \eqref{aesw2} can be chosen independent of $T$ if $T\in (0,1]$. Besides, for the semilinear parabolic equation \eqref{apphi1}, it is easy to prove the continuous dependence on the initial data as well as the  given velocity field $\mathbf{w}^m$. Therefore, the solution operator defined by problem \eqref{apphi1} $$\Phi^m: C([0,T]; \mathbf{V}_m)\times C([0,T]; W_m) \to  L^\infty(0, T; {H}^1(\Omega)) \cap L^2(0, T; {H}^2(\Omega))$$
such that $\phi^m=\Phi^m(\mathbf{w}^m, \psi^m)$, is continuous (indeed $\Phi^m$ does not depend on $\psi^m$).

\textit{Step 2.} Once $\phi^m$ is determined, we turn to look for functions
$$\mathbf{u}^m(t,x)=\sum_{i=1}^m \tilde{g}^m_i(t)\mathbf{v}_i,\quad \theta^m(t,x)=\sum_{i=1}^m \tilde{r}^m_i(t)w_i,$$
that satisfy the following system, for $i=1,...,m$,
\be
\begin{cases}
  \langle \u^m_t, \mathbf{v}_i\rangle_{\mathbf{V}', \mathbf{V}}
 + \int_\Omega (\u^m\cdot\nabla)\u^m\cdot \mathbf{v}_i dx
 + 2\int_\Omega \mu(\theta^m) \mathcal{D} \u^m : \mathcal{D}{\mathbf{v}}_i dx \\
  \qquad = \int_\Omega [\lambda(\theta^m)\nabla\phi^m\otimes\nabla\phi^m] : \nabla \mathbf{v}_i dx
    + \int_\Omega  \theta^m \mathbf{g} \cdot \mathbf{v}_i dx,\\
   \langle\theta^m_t, w_i\rangle_{H^{-1},H^{1}_0} + \int_\Omega (\u^m\cdot \nabla\theta^m) w_i dx +\int_\Omega \kappa(\theta^m)\nabla \theta^m\cdot \nabla w_i dx=0,\\
  \u^m|_{t=0}=\Pi_m \u_0, \quad \theta^m|_{t=0}=\widetilde{\Pi}_m\theta_0,\quad \text{in}\ \Omega,
  \end{cases}\label{ODE}
 \ee
which is equivalent to a system consisting of $2m$ nonlinear ordinary differential equations for the coefficients $\{\tilde{g}^m_i(t)\}_{i=1}^m$ and $\{\tilde{r}_i^m(t)\}_{i=1}^m$. Due to our assumptions on the smoothness of $\lambda$, $\kappa$ and $\mu$, it is standard to show the local well-posedness of the above initial problem using the classical theory of ODEs (see, e.g., \cite{LB99, Z04}). Namely, we have
 \bl
 Let $\phi^m\in L^\infty(0, T; {H}^1(\Omega)\cap L^\infty(\Omega)) \cap L^2(0, T; {H}^2(\Omega))$. Assume that $\u_0\in \mathbf{H}$, $\theta_0\in H^1_0(\Omega)$, then problem \eqref{ODE} admits a unique solution on $[0,T_1)$ such that
$\mathbf{u}^m(t,x)=\sum_{i=1}^m \tilde{g}^m_i(t)\mathbf{v}_i\in H^1(0,T_1; \mathbf{V}_m)$, $\theta^m(t,x)=\sum_{i=1}^m \tilde{r}^m_i(t)w_i\in H^1(0,T_1; W_m)$,
  where $T_1\in (0,T]$ may depend on $M$, $\phi^m$ and $m$.
 \el
 In particular, recalling subsection \ref{uplow} and Remark \ref{uplowremark}, the following estimates hold for $\u^m$ and $\theta^m$:
 \be
 \frac12\frac{d}{dt}\|\theta^m\|^2 + \int_\Omega \kappa(\theta^m)|\nabla \theta^m|^2dx= -\int_\Omega (\u_m\cdot\nabla \theta^m)\theta^m dx=0\non
 \ee
 and
 \bea
 &&\frac12\frac{d}{dt}\|\u^m\|^2 + 2\int_\Omega \mu(\theta^m)|\mathcal{D}\u^m|^2dx\non\\
 &=& \int_\Omega [\lambda(\theta^m)\nabla\phi^m\otimes\nabla\phi^m] : \nabla \mathbf{u}^m dx
    + \int_\Omega  \theta^m \mathbf{g} \cdot \mathbf{u}^m dx\non\\
 &\leq& C(1+\|\theta^m\|_{L^\infty})\|\nabla \phi^m\|^2 \|\nabla \mathbf{u}^m\|_{\mathbf{L}^\infty}+C\|\theta^m\|\|\nabla \u^m\|\non\\
 &\leq& \frac{\underline{\mu}}{2}\|\nabla \u^m\|^2+ \frac{C_m}{\underline{\mu}} \|\nabla \phi^m\|^4(1+\|\theta^m\|^2)+\frac{C}{\underline{\mu}}\|\theta^m\|^2,
 \eea
 where we have used the inverse inequalities $\|\theta^m\|_{L^\infty}\leq C_m \|\theta^m\|$ and $\|\nabla \u^m\|_{\mathbf{L}^\infty}\leq C_m \|\nabla \u^m\|$ because $(\u^m, \theta^m)$ is indeed finite dimensional. By the fact $2\|\mathcal{D}\u^m\|^2=\|\nabla \u^m\|^2$ and the Gronwall inequality, we obtain
 \bea
 &&\sup_{t\in [0,T_1]}\|\theta^m(t)\|^2+2\underline{\kappa}\int_0^{T_1}\|\nabla \theta^m\|^2 dt\leq \|\theta_0\|^2,\label{aesthm}\\
 &&\sup_{t\in [0,T_1]}\|\u^m(t)\|^2+\underline{\mu}\int_0^{T_1}\|\nabla \u^m\|^2 dt\non\\
 && \qquad \leq \|\u_0\|^2 + \frac{C_m}{\underline{\mu}} T_1(1+\|\theta_0\|^2)\sup_{t\in[0,T_1]}\|\nabla \phi^m\|^4+\frac{C}{\underline{\mu}}T_1\|\theta_0\|^2.\label{aesum}
 \eea
Since $\phi^m$ is bounded in $L^\infty(0,T_1; H^1(\Omega))\cap L^2(0,T_1;H^2(\Omega))$, inserting it back to the ODE system \eqref{ODE}, we can further deduce that $\tilde{g}^m_i(t)$, $\tilde{r}^m_i(t)$ $\in H^1(0,T_1)$ and thus $\u^m$ and $\theta^m$ are bounded in $H^1(0,T_1; \mathbf{V}_m)$ and $H^1(0,T_1; W_m)$, respectively, by a constant that depends on $M$, $m$. Besides, for the ODE system \eqref{ODE}, it is easy to prove the continuous dependence on its initial data and the given function $\phi^m$. As a consequence, the solution operator defined by problem \eqref{ODE}
$$\Psi^m: L^\infty(0,T_1; H^1(\Omega))\cap L^2(0,T_1;H^2(\Omega)) \to  H^1(0,T_1; \mathbf{V}_m)\times H^1(0,T_1; W_m)$$
such that $(\u^m,\theta^m)=\Psi^m(\phi^m)$, is continuous.

 \textit{Step 3.} We now prove the existence of solutions to problem (AP1) for sufficiently short time intervals. From the previous steps, we can see that the operator
 $$\Psi^m\circ\Phi^m:C([0,T_1]; \mathbf{V}_m)\times C([0,T_1]; W_m)\to H^1(0,T_1; \mathbf{V}_m)\times H^1(0,T_1; W_m)$$ such that
 $\Psi^m\circ\Phi^m(\mathbf{w}^m, \psi^m)=(\u^m, \theta^m)$, is continuous, where $(\u^m, \theta^m)$ is the solution to problem \eqref{ODE}. Moreover, the compactness of $H^1(0,T_1; \mathbf{V}_m)\times H^1(0,T_1; W_m)$ into $C([0,T_1]; \mathbf{V}_m)\times C([0,T_1]; W_m)$ (because $\mathbf{V}_m$ and $W_m$ are actually finite dimensional spaces) implies that $\Psi^m\circ\Phi^m$ is an compact operator from $C([0,T_1]; \mathbf{V}_m)\times C([0,T_1]; W_m)$ into itself. Finally, due to our choice of $M$ and the estimates \eqref{aesw2}, \eqref{aesthm}, \eqref{aesum}, it holds
 \be
 \sup_{t\in[0,T_1]}\left(\|\u^m(t)\|^2+\|\theta^m(t)\|^2\right)\leq \frac{M}{2}+ \frac{T_1}{2\underline{\mu}}\left(C_m M K^2+CM\right).
 \ee
 Hence, we can take $T_m\in(0,T_1)$ to be sufficiently small such that $\|\u^m(t)\|^2+\|\theta^m(t)\|\leq M$ for all $t\in [0,T_m]$.

 We are ready to apply the Schauder's fixed point theorem to conclude that there exists at least one fixed point $(\u^m, \theta^m)$
 in the bounded closed convex set
 \bea
 &&\Big\{ (\u^m, \theta^m) \in C([0,T_m]; \mathbf{V}_m\times  W_m)\ \mid\ \sup_{t\in[0,T_m]}\left(\|\u^m(t)\|^2+\|\theta^m(t)\|^2\right)\leq M,\non\\
 &&\qquad \qquad \qquad \qquad \qquad\qquad \quad\qquad\quad \quad\left. \text{with} \ \u^m(0)=\Pi_m \u_0, \ \theta^m(0)=\widetilde{\Pi}_m \theta_0. \right\}\non
 \eea
 such that $\u^m\in H^1(0,T_m; \mathbf{V}_m)$,  $\theta^m\in H^1(0,T_m; W_m)$ and $\phi^m\in L^\infty(0, T_m; {H}^1(\Omega)\cap L^\infty(\Omega)) \cap L^2(0,
T_m; {H}^2(\Omega))$. Uniqueness of the approximate solution $(\u^m, \phi^m, \theta^m)$ is an easy consequence of the energy method and its further regularity  follows from the regularity theory for parabolic equations (cf. e.g., \cite{lieb}). Thus, Proposition \ref{ppn3} is proved.
\end{proof}

\subsection{Semi-Galerkin Approximation Type B: for global weak solutions in $2D$}

It has been shown in Section \ref{sec:glow2D} that the weak maximum principle for $\theta$ (i.e., Lemma \ref{mtheta}) plays an important role in obtaining global lower-order estimates of solutions to problem \eqref{1}--\eqref{ini}. However, in the semi-Galerkin approximation of Type A given in the previous section, the $\theta$ equation is approximated by the Galerkin ansatz and Lemma \ref{mtheta} does not apply any more. Hence, we shall make use of an alternative scheme, in which only the equation \eqref{1} for the velocity $\u$ is approximated by the Galerkin method. To this end, for every $m\in\mathbb{N}$ and arbitrary $T>0$, we consider the following approximate problem (AP2): looking for functions
$$\u^m(t,x)=\sum_{i=1}^m g^m_i(t)\mathbf{v}_i(x),\quad \phi^m(t,x)\quad \mathrm{and}\quad \theta^m(t,x)$$
 such that
\be
\mathrm{(AP2)}\begin{cases}
  \langle \u^m_t, \mathbf{v}\rangle_{\mathbf{V}', \mathbf{V}}
 + \int_\Omega (\u^m\cdot\nabla)\u^m\cdot \mathbf{v} dx
 + 2\int_\Omega \mu(\theta^m) \mathcal{D} \u^m : \mathcal{D}{\mathbf{v}} dx \\
  \qquad = \int_\Omega [\lambda(\theta^m)\nabla\phi^m\otimes\nabla\phi^m] : \nabla \mathbf{v} dx
    + \int_\Omega  \theta^m \mathbf{g} \cdot \mathbf{v} dx,\quad \forall\,
 \mathbf{v}\in \mathbf{V}_m,\\
  \phi^m_t+\u^m\cdot \nabla\phi^m+\gamma (-\Delta \phi^m+W'(\phi^m)) =0,\quad \text{a.e.\ in}\  (0, T)\times \Omega, \\
   \theta^m_t +  \u^m\cdot \nabla\theta^m =\nabla \cdot( \kappa(\theta^m)\nabla \theta^m),\qquad \text{a.e.\ in}\  (0, T)\times \Omega,\\
  \phi^m=\phi_b,\quad \theta^m=0,\quad \text{on} \ (0,T)\times \Gamma,\\
  \u^m|_{t=0}=\u_{0m}:=\Pi_m \u_0, \quad \phi^m|_{t=0}=\phi_0,\quad \theta^m|_{t=0}=\theta_0,\quad \text{in}\ \Omega.
  \end{cases}\non
 \ee
 \bp\label{ppn2}
Suppose $n=2$. We assume that $(\u_0, \phi_0, \theta_0)$ $\in$ $\mathbf{H} \times (H^1(\Omega)\cap L^\infty(\Omega))\times
 (H^1_0(\Omega)\cap L^\infty(\Omega))$, $\phi_b\in H^\frac32(\Gamma)$ with $\phi_0|_\Gamma=\phi_b$ satisfying $|\phi_0|\leq 1$ a.e. in $\Omega$ and $|\phi_b|\leq 1$ on $\Gamma$.
 For every $m\in\mathbb{N}$, there is a time $T_m>0$ depending on $\u_0$, $\phi_0$, $\theta_0$, $m$ and $\Omega$ such that problem $\mathrm{(AP2)}$ admits a unique solution  on $[0,T_m]$ satisfying $\u^m\in H^1(0,T_m; \mathbf{V}_m)$,  $\phi^m\in L^\infty(0, T_m; {H}^1(\Omega)\cap L^\infty(\Omega)) \cap L^2(0,
T_m; {H}^2(\Omega))$ and  $\theta^m\in L^\infty(0, T_m; {H}_0^1(\Omega)\cap L^\infty(\Omega)) \cap L^2(0, T_m; {H}^2(\Omega))$.
Moreover, if we further assume $\phi_0\in H^2(\Omega)$, $\phi_b\in H^\frac52(\Gamma)$ and $\theta_0\in H^2(\Omega)$, then $\phi^m\in L^\infty(0, T_m; {H}^2(\Omega)) \cap L^2(0,T_m; {H}^3(\Omega))$ and $\theta^m\in L^\infty(0,T_m; H^2(\Omega))\cap L^2(0,T_m; H^3(\Omega))$.
\ep
\begin{proof}
Again, the existence of local solutions to the approximate problem (AP2) on certain time interval $[0,T_m]$ follows from a fixed point argument.
Let $T>0$ and $M=2+2\|\u_0\|^2$. Consider an arbitrary given vector $$\mathbf{w}^m=\sum_{i=1}^mg^m_i(t)\mathbf{v}_m \in C([0,T]; \mathbf{V}_m)$$ with $$g_i^m(0)=(\u_0, \mathbf{v}_i)\quad \mathrm{and} \quad \sup_{t\in[0,T]}\sum_{i=1}^m |g^m_i(t)|^2\leq M.$$
 It is obvious that an estimate like \eqref{aesw} still holds for $\mathbf{w}^m$.

\textit{Step 1.} We investigate parabolic equations for $\phi^m$ and $\theta^m$ with convention term under the given velocity $\mathbf{w}^m$.
For $\phi^m$, one shall consider the problem \eqref{apphi1} again and obtain the same results as in \emph{Step 1} of the proof for Proposition \ref{ppn3} (here we note that the temperature variable $\theta^m$ does not appear in the $\phi^m$-equation \eqref{apphi1}).

Next, we consider the equation for $\theta^m$:
\begin{equation}
\begin{cases}
  \theta^m_t+ \mathbf{w}^m\cdot \nabla\theta^m= \nabla\cdot( \kappa(\theta^m)\nabla \theta^m),\quad \text{a.e.\ in}\  (0, T)\times \Omega,\\
  \theta^m=0,\quad \text{on} \ (0,T)\times \Gamma,\\
  \theta^m|_{t=0}=\theta_0(x),\quad \text{in}\ \Omega.
\end{cases}\label{apthe1}
\end{equation}
Well-posedness of problem \eqref{apthe1} can be obtained by a standard Galerkin method (see e.g., \cite{LB99}). Indeed, we can prove
\bl\label{thetaap}
Assume that $\mathbf{w}^m\in C([0,T];\mathbf{V}_m)$, $\theta_0\in H^1_0(\Omega)\cap L^\infty(\Omega)$. Then there exist a unique weak solution $\theta^m\in L^\infty(0, T; {H}^1_0(\Omega)\cap L^\infty(\Omega)) \cap L^2(0,T; {H}^2(\Omega))$ to problem \eqref{apthe1}.
\el
Then we derive some estimates for the weak solution $\theta^m$ to problem \eqref{apthe1}. It is easy to see that the weak maximum principle Lemma \ref{mtheta} now applies to $\theta^m$, i.e.,
 \be
 \|\theta^m(t)\|_{L^\infty}\leq \|\theta_0\|_{L^\infty},\quad \forall\,t\in [0,T].\label{aesthmc}
 \ee
 By the continuity of $\kappa(\cdot)$, we see that $\kappa(\theta^m)$ has positive lower and upper bounds $0<\underline{\kappa}<\overline{\kappa}<+\infty$ on $[0,T]$, which depend on $\|\theta_0\|_{L^\infty}$.  Similar to \eqref{aesthm}, we have
 \be
 \sup_{t\in [0,T]}\|\theta^m(t)\|^2+2\underline{\kappa}\int_0^{T}\|\nabla \theta^m\|^2 dt\leq \|\theta_0\|^2.\label{aesthma}
 \ee
 Concerning the $H^1$-estimate for $\theta^m$, we introduce the transformation $\vartheta^m=\int_0^{\theta^m} \kappa(s) ds$
 and deduce the following equation for $\vartheta^m$ (cf. \eqref{newtheta1}):
 \be
\begin{cases}
 \vartheta^m_t+\mathbf{w}^m\cdot \nabla \vartheta^m-\tilde{\kappa}(\vartheta^m)\Delta \vartheta^m=0,\\
 \vartheta^m|_{\Gamma}=0, \\
 \vartheta^m|_{t=0}=\vartheta^m_0(x)=\int_0^{\theta_0(x)}\kappa(s)ds,
 \end{cases}\label{apnewtheta1}
\ee
 with $\underline{\kappa}\leq \tilde{\kappa}(\vartheta^m)\leq \overline{\kappa}$.
  Multiplying the above equation by $-\Delta \vartheta^m$ and integrating over $\Omega$, we get
\bea
\frac12\frac{d}{dt}\|\nabla \vartheta^m\|^2 +\underline{\kappa} \|\Delta \vartheta^m\|^2 &\leq& \int_\Omega (\mathbf{w}^m\cdot\nabla \vartheta^m )\Delta \vartheta^m dx \non\\
&\leq& \|\mathbf{w}^m\|_{\mathbf{L}^\infty}\|\nabla \vartheta^m\|\|\Delta \vartheta^m\|\non\\
&\leq& \frac{\underline{\kappa}}{2}\|\Delta \vartheta^m\|^2+\frac{C_m M}{2\underline{\kappa}}\|\nabla \vartheta^m\|^2,
\eea
where we have used the fact \eqref{aesw} again to estimate $\|\mathbf{w}^m\|_{\mathbf{L}^\infty}$. By the Gronwall inequality, it holds
\be\sup_{t\in[0,T]}\|\nabla \vartheta^m(t)\|^2+\underline{\kappa}\int_0^T \|\Delta  \vartheta^m(t)\|^2 dt
\leq \|\nabla \vartheta^m_0\|^2(\underline{\kappa}^{-1} C_m M T+1) e^{\underline{\kappa}^{-1} C_m M T},\non
\ee
which implies
\be
\sup_{t\in[0,T]}\|\theta^m(t)\|_{H^1}^2+\int_0^T \|\theta^m(t)\|_{H^2}^2 dt\leq K'(\|\theta_0\|_{H^1}, \|\theta_0\|_{L^\infty}, T, M, m, \underline{\kappa}, \overline{\kappa}).\label{aesw2a}
\ee
We note that the constant $K'$ on the right-hand side of \eqref{aesw2a} will be independent of $T$ if $T\in (0,1]$.

Next, we shall prove a continuous dependence result for problem \eqref{apthe1} on its initial data and the given velocity field $\mathbf{w}^m$.
Due to the restriction from Sobolev embedding theorems, this result is only available in the current regularity class for $\theta^m$ provided that the spatial dimension $n=2$.

Let $\theta_1^m$ and $\theta_2^m$ be two weak solutions to problem \eqref{apthe1} on $[0,T]$ corresponding to the initial data
 $\theta_{01}$, $\theta_{02}\in H^1_0(\Omega)\cap L^\infty(\Omega)$ and given vectors $\mathbf{w}^m_1, \mathbf{w}^m_2\in C([0,T]; \mathbf{V}_m)$, respectively.
 Denote the differences by
$\bar{\theta}^m=\theta_1^m-\theta_2^m$, $\bar{\mathbf{w}}^m=\mathbf{w}^m_1- \mathbf{w}^m_2$. We can see that $\bar \theta^m$ satisfies
 \be
 \begin{cases}
  \bar{\theta}^m_t+ \mathbf{w}^m_1 \cdot \nabla \bar{\theta}^m+\bar{\mathbf{w}}^m \cdot\nabla \theta^m_2\\
  \qquad =\nabla \cdot(\kappa(\theta_1^m)\nabla \bar{\theta}^m)+\nabla \cdot[(\kappa(\theta_1^m)-\kappa(\theta_2^m))\nabla \theta_2^m],\\
  \bar \theta^m|_\Gamma=0,\\
 \bar\theta^m|_{t=0}=\theta_{01}-\theta_{02}\, \quad \text{in}\ \Omega.
 \end{cases}
 \ee
Multiplying the above equation by $\bar{\theta}^m$, integrating over $\Omega$, by the Sobolev embedding theorem ($n=2$), the H\"older inequality, Young's inequality and the estimates \eqref{aesthmc}, we have
\bea
&&\frac12\frac{d}{dt}\|\bar{\theta}^m\|^2+\int_\Omega \kappa(\theta_1^m)|\nabla \bar{\theta}^m|^2dx \non\\
&=& -\int_\Omega (\mathbf{w}^m_1 \cdot \nabla \bar{\theta}^m+\bar{\mathbf{w}}^m \cdot\nabla \theta^m_2) \bar{\theta}^m dx-\int_\Omega (\kappa(\theta_1^m)-\kappa(\theta_2^m))\nabla \theta_2^m\cdot \nabla \bar{\theta}^m dx\non\\
&\leq&  \|\bar{\mathbf{w}}^m \|_{\mathbf{L}^4}\|\nabla \theta^m_2\|_{\mathbf{L}^4}\| \bar{\theta}^m\|\non\\
&&\quad  +\|\int_0^1\kappa'(\tau\theta_1^m+(1-\tau)\theta_2^m)\bar{\theta}^m d\tau\|_{L^4}\|\nabla \theta_2^m\|_{\mathbf{L}^4}\|\nabla \bar{\theta}^m \|\non\\
&\leq& C \|\bar{\mathbf{w}}^m \|_{\mathbf{V}}\|\nabla \theta^m_2\|^\frac12\|\Delta \theta^m_2\|^\frac12\| \bar{\theta}^m\|\non\\
&&\quad  + C\|\kappa'\|_{L^\infty}\|\nabla \theta_2^m\|^\frac12\|\Delta \theta_2^m\|^\frac12 \|\bar{\theta}^m \|^\frac12 \|\nabla \bar{\theta}^m \|^\frac32\non\\
&\leq& \frac{\underline{\kappa}}{2}\|\nabla \bar{\theta}^m\|^2+ \|\bar{\mathbf{w}}^m\|_{\mathbf{V}}^2 + L(t) \|\bar{\theta}^m \|^2,\label{aesw2ag}
\eea
where $$L(t)=\frac{C}{\underline{\kappa}^3}\|\nabla \theta^m_2\|^2\|\Delta \theta^m_2\|^2+C\|\nabla \theta^m_2\|\|\Delta \theta^m_2\|. $$
It follows from the estimate \eqref{aesw2a} and the Cauchy--Schwarz inequality that
$$
\int_0^T L(t) dt\leq C(K',T)+\frac{C}{\underline{\kappa}^3}\sup_{t\in[0,T]}\|\nabla \theta^m_2\|^2\int_0^T\|\Delta \theta^m_2\|^2dt<+\infty.
$$
Then by \eqref{aesw2ag} and the Gronwall inequality, we get
\be
\|\bar{\theta}^m(t)\|^2 \leq \left(\|\theta^m_{01}-\theta^m_{02}\|^2+\int_0^t\|\bar{\mathbf{w}}^m\|_{\mathbf{V}}^2d\tau \right)e^{\int_0^t L(\tau)d\tau},\quad \forall\,t\in [0,T].\non
\ee
As a consequence, the solution operator defined by problems \eqref{apphi1} and \eqref{apthe1}
\bea
&&\Phi^m: C([0,T]; \mathbf{V}_m) \to  \non\\
&&\quad [L^\infty(0, T; {H}^1(\Omega) \cap L^2(0, T; {H}^2(\Omega))]\times [L^\infty(0, T; {H}^1_0(\Omega)\cap L^\infty(\Omega)) \cap L^2(0,T; {H}^2(\Omega))]
\non
\eea
 such that $(\phi^m, \theta^m)=\Phi^m(\mathbf{w}^m)$, is continuous.

\textit{Step 2.} Once the functions $\phi^m$ and $\theta^m$ have been  determined, we turn to look for a vector $\mathbf{u}^m(t,x)=\sum_{i=1}^m \tilde{g}^m_i(t)\mathbf{v}_i$
that satisfies a system of $m$ nonlinear ordinary differential equations for $\{\tilde{g}^m_i(t)\}_{i=1}^m$ such that for $i=1,...,m$,
\be
\begin{cases}
  \langle \u^m_t, \mathbf{v}_i\rangle_{\mathbf{V}', \mathbf{V}}
 + \int_\Omega (\u^m\cdot\nabla)\u^m\cdot \mathbf{v}_i dx
 + 2\int_\Omega \mu(\theta^m) \mathcal{D} \u^m : \mathcal{D}{\mathbf{v}}_i dx \\
  \qquad = \int_\Omega [\lambda(\theta^m)\nabla\phi^m\otimes\nabla\phi^m] : \nabla \mathbf{v}_i dx
    + \int_\Omega  \theta^m \mathbf{g} \cdot \mathbf{v}_i dx,\\
  \u^m|_{t=0}=\Pi_m \u_0,\quad \text{in}\ \Omega.
  \end{cases}\label{ODE1}
 \ee
Due to our assumptions on the coefficients $\lambda$, $\kappa$ and $\mu$, it is standard to prove
 \bl
 Let $n=2$, $\phi^m\in L^\infty(0, T; {H}^1(\Omega)\cap L^\infty(\Omega)) \cap L^2(0, T; {H}^2(\Omega))$ and $\theta^m \in L^\infty(0, T; {H}^1_0(\Omega)$ $\cap L^\infty(\Omega)) \cap L^2(0,T; {H}^2(\Omega))$. Assume that $\u_0\in \mathbf{H}$, then problem \eqref{ODE1} admits a unique solution on $[0,T_2]$ such that
$\mathbf{u}^m(t,x)=\sum_{i=1}^m \tilde{g}^m_i(t)\mathbf{v}_i\in H^1(0,T_2; \mathbf{V}_m)$, where $T_2\in (0,T)$ may depend on $M$, $\phi^m$, $\theta^m$, $m$.
 \el
 In a similar way as before, we see that the estimate \eqref{aesum} still holds for $\u^m$ and $\u^m$ is bounded in $H^1(0,T_2; \mathbf{V}_m)$, by a constant depending on $M$, $m$. Besides, for the ODE system \eqref{ODE1}, it is standard to prove the continuous dependence on its initial data and $\phi^m$, $\theta^m$. As a consequence, the solution operator defined by problem \eqref{ODE1}
 \begin{align}
 \Psi^m:\ &[L^\infty(0,T_2; H^1(\Omega)\cap L^\infty(\Omega))\cap L^2(0,T_2;H^2(\Omega))]\non\\
         &\qquad \times [L^\infty(0, T_2; {H}^1_0(\Omega)\cap L^\infty(\Omega)) \cap L^2(0,T_2; {H}^2(\Omega))]\non\\
         &\to  H^1(0,T_2; \mathbf{V}_m)\non
 \end{align}
  such that $\u^m=\Psi^m(\phi^m, \theta^m)$, is continuous.

 \textit{Step 3.} From the previous steps, we see that the operator
 $$\Psi^m\circ\Phi^m:C([0,T_2]; \mathbf{V}_m)\to H^1(0,T_2; \mathbf{V}_m)$$
  such that
 $\Psi^m\circ\Phi^m(\mathbf{w}^m)=\u^m$, is continuous, where $\u^m$ is the solution to problem \eqref{ODE1}. Again, compactness of $H^1(0,T_2; \mathbf{V}_m)$ into $C([0,T_2]; \mathbf{V}_m)$ implies that the operator $\Psi^m\circ\Phi^m$ is compact from $C([0,T_2]; \mathbf{V}_m)$ into itself. Thanks to our current choice of $M$ and the estimate \eqref{aesum}, we can find a time $T_m\in(0,T_2)$ to be sufficiently small such that $\|\u^m(t)\|^2\leq M$ for all $t\in [0,T_m]$. The Schauder's fixed point theorem implies that there exists at least one fixed point $\u^m$ in the bounded closed convex set
 \bea
 &&\Big\{ \u^m \in C([0,T_m]; \mathbf{V}_m)\ \mid\ \sup_{t\in[0,T_m]}\|\u^m(t)\|^2\leq M,\ \text{with} \ \u^m(0)=\Pi_m \u_0. \Big\}\non
 \eea
 such that $\u^m\in H^1(0,T_m; \mathbf{V}_m)$, $\phi^m\in L^\infty(0, T_m; {H}^1(\Omega)\cap L^\infty(\Omega)) \cap L^2(0,
T_m; {H}^2(\Omega))$ and $\theta^m\in L^\infty(0, T_m; {H}^1_0(\Omega)\cap L^\infty(\Omega)) \cap L^2(0,T_m; {H}^2(\Omega))$. Uniqueness of the approximate solution $(\u^m, \phi^m, \theta^m)$ is an easy consequence of the energy method and its further regularity follows from the classical regularity theory for parabolic equations (cf. e.g., \cite{lieb}). Thus, Proposition \ref{ppn2} is proved.
\end{proof}


\section*{Acknowledgements.} The author is grateful to the referees for their very helpful
comments and suggestions. Part of the research was supported by National Science Foundation of China 11371098 and Shanghai Center for Mathematical Sciences at Fudan University.


\end{document}